\documentclass[12pt]{amsart}
\usepackage{amssymb,bbold,hyperref}
\usepackage[all]{xy}

\theoremstyle{plain}
\newtheorem{theorem}{Theorem}[section]
\newtheorem{proposition}[theorem]{Proposition}
\newtheorem{corollary}[theorem]{Corollary}
\newtheorem{lemma}[theorem]{Lemma}

\theoremstyle{definition}
\newtheorem{remark}[theorem]{Remark}
\newtheorem{example}[theorem]{Example}
\newtheorem{definition}[theorem]{Definition}
\newtheorem{problem}[theorem]{Problem}

\newcommand{\abs}[1]{\lvert#1\rvert}
\newcommand{\norm}[1]{\lVert#1\rVert}
\newcommand{\bigabs}[1]{\bigl\lvert#1\bigr\rvert}
\newcommand{\bignorm}[1]{\bigl\lVert#1\bigr\rVert}
\newcommand{\Bigabs}[1]{\Bigl\lvert#1\Bigr\rvert}
\newcommand{\Bignorm}[1]{\Bigl\lVert#1\Bigr\rVert}
\newcommand{\term}[1]{{\textit{\textbf{#1}}}}

\renewcommand{\mid}{\::\:}

\def\one{\mathbb 1}

\begin{document}

\title[Uo-convergence and applications to Ces\`aro means]
{Uo-convergence and its applications\\ to Ces\`aro means in Banach lattices}

\author{N. Gao}
\address{School of Mathematics, Southwest Jiaotong University,
  Chengdu, Sichuan, 610000, China.}
\email{ngao@home.swjtu.edu.cn}

\author{V.~G. Troitsky}
\address{Department of Mathematical and Statistical Sciences,
         University of Alberta, Edmonton, AB, T6G\,2G1, Canada.}
\email{troitsky@ualberta.ca}

\author{F. Xanthos}
\address{Department of Mathematics, Ryerson University, 350 Victoria St.,
Toronto, ON, M5B 2K3, Canada.}
\email{foivos@ryerson.ca}

\thanks{The authors were supported by NSERC grants}
\keywords{vector lattice, Banach lattice, uo-convergence, order convergence, regular sublattice, AL-representation, Komlos property, Banach-Saks property, Banach-Saks operator}
\subjclass[2010]{Primary: 46B42. Secondary: 46A40, 46E30, 28A20}
\date{\today}

\begin{abstract}
  A net $(x_\alpha)$ in a vector lattice $X$ is said to uo-converge to $x$ if $\abs{x_\alpha-x}\wedge u\xrightarrow{\rm o}0$ for every $u\ge 0$. In the first part of this paper, we study some functional-analytic aspects of uo-convergence. We prove that uo-convergence is stable under passing to and from regular sublattices. This fact leads to numerous applications presented throughout the paper. In particular, it allows us to improve several results in \cite{GaoX:14,Gao:14}. In the second part, we use uo-convergence to study convergence of Ces\`aro means in Banach lattices. In particular, we establish an intrinsic version of Koml\'os' Theorem, which extends the main results of \cite{Komlos:67,Day:10,Jimenes:11} in a uniform way. We also develop a new and unified approach to  Banach-Saks properties and Banach-Saks operators based on uo-convergence. This approach yields, in particular, short direct proofs of several results in \cite{Dodds:07,Flores:06,Flores:08}.
\end{abstract}

\maketitle

\section{Introduction}

The notion of uo-convergence is an abstraction of almost everywhere
convergence in function spaces and originally goes back to
\cite{Nakano:48}. It was later investigated in
\cite{DeMarr:64,Wickstead:77,Kaplan:97,GaoX:14,Gao:14}.
In~\cite{GaoX:14}, uo-convergence was applied in a study of abstract martingales in the framework of vector lattices. In particular,
\cite{GaoX:14} includes an extension of Doob's (sub)martingale
convergence theorems to vector lattices. In the present paper, we
further investigate uo-convergence and present several applications of
this tool. 

The structure of the paper is as follows. In Section~\ref{sec:reg}, we obtain several new results about regular sublattices and order convergence. Recall that a sublattice $Y$ in a vector lattice $X$ is \term{regular} if $\inf A$ is the same in $X$ and in $Y$ whenever $A$ is a subset of $Y$ whose infimum exists in~$Y$. We prove that in this case, the order completion $Y^\delta$ of $Y$ is also regular in $X^\delta$. We then use this to deduce that order convergences in $X$ and in $Y$ are the same for order bounded nets of~$Y$.

In Section~\ref{uo-sec}, we apply results of Section~\ref{sec:reg} to show that a sublattice $Y$ is regular in $X$ iff the uo-convergences in $X$ and $Y$ agree. In particular, the uo-convergences in $X$ and in $X^\delta$ agree. This allows us to drop the order completeness assumptions from several results of \cite{GaoX:14,Gao:14}. In particular, we show that every disjoint sequence in a vector lattice uo-converges to zero, and that if $w$ is a weak unit then  $x_\alpha\xrightarrow{\rm uo}x$ iff $\abs{x_\alpha-x}\wedge w\xrightarrow{\rm o}0$. We show that a Banach lattice has the Positive Schur Property iff every uo- and weakly null sequence is norm null. We also discuss the relationship between uo-convergence in $X$ and order convergence in the universal completion of~$X$.

In Section~\ref{sec:AL}, we go over AL-representations of vector lattices with strictly positive functionals. Recall that if $X$ is a vector lattice with a strictly positive functional~$h$, then $\norm{x}=h\bigl(\abs{x}\bigr)$ defines an AL-norm on $X$ and, therefore, the completion of $X$ with respect to this norm is lattice isometric to $L_1(\mu)$ for some measure~$\mu$.  We show that~$X$, viewed as a sublattice of $L_1(\mu)$, is regular iff it is order dense iff $h$ is order continuous. In this case, the results of Section~\ref{uo-sec} yield that a sequence uo-converges in $X$ iff it converges $\mu$-almost everywhere to some vector in~$X$.

Section~\ref{sec:komlos} is centred around the Koml\'os property. Let
$(x_n)$ be a sequence in a vector space~$X$. Consider the sequence $(a_n)$ of \term{Ces\`aro means} of $(x_n)$, defined by $a_n=\frac1n\sum_{k=1}^nx_k$. In \cite{Komlos:67}, Koml\'os proved the following celebrated result:

\begin{theorem}[\cite{Komlos:67}] \label{kom}
Let $(x_n)$ be a norm bounded sequence in $L_1(\mathbb{P})$, where $\mathbb P$ is a probability measure. Then there exists a subsequence $(y_n)$ of $(x_n)$ and a function $g\in L_1(\mathbb{P})$ such that the Ces\`aro means of any subsequence of $(y_n)$ converge to $g$ almost everywhere.
\end{theorem}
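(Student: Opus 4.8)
The plan is to first strip off the part of $(x_n)$ that is not uniformly integrable, then truncate what remains and treat the bounded truncations by a Banach--Saks-type argument in $L_2$, keeping track throughout of the fact that every estimate used survives passage to a further subsequence. Concretely, I would first apply the well-known subsequence splitting lemma for bounded sequences in $L_1$: after replacing $(x_n)$ by a subsequence we may write $x_n=u_n+v_n$, where $(u_n)$ is uniformly integrable and the $v_n$ have pairwise disjoint supports. The disjoint part is harmless, since at $\mathbb P$-a.e.\ $\omega$ at most one $v_n(\omega)$ is nonzero (and finite), so $\frac1n\sum_{k=1}^{n}v_k\to0$ a.e., and the same holds along every subsequence. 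Thus it suffices to prove the conclusion for $(u_n)$; running the argument below first on $(u_n^{+})$ and then, inside the resulting subsequence, on $(u_n^{-})$, we may also assume $u_n\ge0$.

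Next, for each $m\in\mathbb N$ the sequence $(u_n\wedge m)_n$ is bounded in $L_2(\mathbb P)$ by $m$. Using weak sequential compactness in $L_2$ and a diagonal argument over $m$, I would pass to a subsequence $(u_{n_k})$ along which, for every $m$, $u_{n_k}\wedge m\to g_m$ weakly in $L_2$ and, after a greedy further extraction, the Gram defects $\bigl\langle (u_{n_i}\wedge m)-g_m,\ (u_{n_j}\wedge m)-g_m\bigr\rangle$ with $i<j$ are summable. Summability of the Gram defects yields $\mathbb E\bigl|\frac1n\sum_{k\le n}\bigl((u_{n_k}\wedge m)-g_m\bigr)\bigr|^{2}=O_m(1/n)$, which is summable along $n=2^{j}$; a Rademacher--Menshov-type maximal inequality over the dyadic blocks $[2^{j},2^{j+1})$ then upgrades this to a.e.\ convergence to $0$ of the full sequence of Cesàro means of $(u_{n_k}\wedge m)-g_m$. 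The reason for insisting on summable Gram defects is that this property is inherited by every further subsequence, so the Cesàro means of the $m$-truncations of \emph{any} subsequence of $(u_{n_k})$ converge a.e.\ to the same $g_m$. Since $0\le g_m\le g_{m+1}$ and $\norm{g_m}_1\le\sup_n\norm{u_n}_1<\infty$, we obtain $g_m\uparrow g$ a.e.\ for some $g\in L_1(\mathbb P)$, and this $g$ will be the limit asserted in the theorem.

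Finally I would patch everything together. Writing $a_n=\frac1n\sum_{k\le n}u_{n_k}$, the inequality $u_{n_k}\ge u_{n_k}\wedge m$ together with the previous step gives $\liminf_n a_n\ge g_m$ for every $m$, hence $\liminf_n a_n\ge g$ a.e. Since $a_n=\frac1n\sum_{k\le n}(u_{n_k}\wedge m)+\frac1n\sum_{k\le n}(u_{n_k}-m)^{+}$ and the first summand tends a.e.\ to $g_m$, the reverse inequality $\limsup_n a_n\le g$ a.e.\ reduces to showing $\limsup_m\limsup_n\frac1n\sum_{k\le n}(u_{n_k}-m)^{+}=0$ a.e. This is the crux. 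The tails $\bigl((u_{n_k}-m)^{+}\bigr)_k$ are again uniformly integrable, and choosing the truncation levels so that $\sup_n\norm{(u_n-m)^{+}}_1$ is summable in $m$ makes their $L_1$-norms small; but --- as is already visible for nonnegative uniformly integrable sequences --- smallness in $L_1$ does not by itself force the Cesàro means of the tails to be pointwise small, so the nearly-orthogonal and maximal-inequality machinery of the previous step has to be re-run on the tails as well. Carrying this out forces an infinitely iterated, delicately balanced choice of truncation levels --- large enough that the successive tail norms are summable, yet controlled enough that the second moments of the truncated pieces do not blow up --- and this bookkeeping is, I expect, the main obstacle and the technical heart of Koml\'os' original argument. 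Once it is in place one obtains $\limsup_m\limsup_n\frac1n\sum_{k\le n}(u_{n_k}-m)^{+}=0$ a.e., hence $\limsup_n a_n\le g$ a.e., and therefore $a_n\to g$ a.e. Because every ingredient used --- the disjointness of the $v_n$, the summability of the Gram defects, the tail-norm bounds --- passes to subsequences, the same limit $g$ serves for every subsequence of $(y_n):=(x_{n_k})$, which is exactly the conclusion.
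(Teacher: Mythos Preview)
The paper does not give its own proof of Theorem~\ref{kom}; it is quoted from \cite{Komlos:67} as a classical result and used as a black box (most notably in the proof of Theorem~\ref{pre-komlos}). So there is no proof in the paper to compare your proposal against.

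On the proposal itself: the overall architecture is reasonable and in the spirit of known arguments --- split off a disjointly supported part (whose Ces\`aro means trivially tend to zero pointwise, and hereditarily so), truncate the uniformly integrable remainder at each level $m$, extract in $L_2$ a subsequence with summable off-diagonal inner products so that the Ces\`aro means of the $m$-truncations converge a.e.\ hereditarily to some $g_m$, and then let $m\to\infty$. The $\liminf$ half of the last step is fine. But you explicitly leave the $\limsup$ half unfinished: you correctly observe that $L_1$-smallness of the tails $(u_{n_k}-m)^+$ does not by itself force pointwise smallness of their Ces\`aro means, that the $L_2$ machinery must be re-run on the tails, and that this forces an ``infinitely iterated, delicately balanced choice of truncation levels'' which you do not carry out. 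That step is not a bookkeeping afterthought; it is the heart of the theorem, and without it what you have is a plan rather than a proof. In Koml\'os' original argument (and in later simplifications such as Chatterji's) the truncation levels and the subsequence are chosen \emph{simultaneously} via a recursive construction governed by a three-series-type criterion, and it is this interleaving that makes the tail contributions summable pointwise and hereditarily.

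One side remark: invoking the $L_1$ subsequence splitting lemma at the outset is legitimate, since it has proofs independent of Koml\'os (e.g., via the biting lemma), but several presentations derive the splitting lemma \emph{from} Koml\'os' theorem; you should therefore indicate which independent proof of the splitting you rely on, to avoid any appearance of circularity.
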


We introduce the notions of Koml\'os and pre-Koml\'os properties for Banach lattices in terms of uo-convergence. Our definitions are measure-free, yet are shown to be consistent with the measure-dependent definitions given in \cite{Jimenes:11,Day:10}. In Theorem~\ref{pre-komlos}, we identify a large class of Banach lattices that possess the pre-Koml\'os and Koml\'os properties. We also study the converse of the Koml\'os theorem (Theorem~\ref{con-kom}). As will be illustrated, our results unify and improve the main results in \cite{Jimenes:11,Day:10}.

In Section~\ref{sec:BS}, we use the pre-Koml\'os property of Banach lattices to study Banach-Saks properties and Banach-Saks operators. Recall the following classical fact due to Banach-Saks \cite{Banach:30} and Szlenk \cite{Szlenk:65}.

\begin{theorem}[Banach-Saks-Szlenk]\label{BS}
Let $(x_n)$ be a weakly null sequence in $L_p(\mathbb{P})$, where $\mathbb P$ is a probability measure and $1\leq p<\infty$. Then there exists a subsequence $(y_n)$ of $(x_n)$ such that the Ces\`aro means of any subsequence of $(y_n)$ converge to zero in norm.
\end{theorem}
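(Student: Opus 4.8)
The plan is to read Theorem~\ref{BS} off the pre-Koml\'os property of Section~\ref{sec:komlos}: once a subsequence is extracted whose Ces\`aro means — and those of all its further subsequences — uo-converge, weak nullity will force the limits to be $0$, and the order continuity of the norm of $L_p(\mathbb P)$, together with a subsequence-splitting argument, will upgrade this uo-convergence to norm convergence.

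Since $\mathbb P$ is a probability measure, $L_p(\mathbb P)$ with $1\le p<\infty$ is a Banach lattice with order continuous norm, it carries the weak unit $\one$, and it is order dense in $L_1(\mathbb P)$ (this is its AL-representation with respect to integration); for $p>1$ it is reflexive and for $p=1$ it is an AL-space, so in either case it is a KB-space. By Theorem~\ref{pre-komlos} it therefore has the pre-Koml\'os property, and I would apply this to the given weakly null — hence norm bounded — sequence $(x_n)$: there is a subsequence $(y_n)$ of $(x_n)$ such that the Ces\`aro means of \emph{every} subsequence of $(y_n)$ uo-converge in $L_p(\mathbb P)$.

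Next I would check that all these uo-limits vanish. Fix a subsequence $(z_n)$ of $(y_n)$, let $(a_n)$ be its Ces\`aro means, and let $g\in L_p(\mathbb P)$ be their uo-limit. On one hand, $(a_n)$ is weakly null, since for each $\phi\in L_p(\mathbb P)^*$ the scalars $\phi(a_n)$ are the Ces\`aro means of the null sequence $\bigl(\phi(z_n)\bigr)_n$. On the other hand $a_n\xrightarrow{\rm uo}g$, so by the description of uo-convergence in $L_p(\mathbb P)$ from Section~\ref{sec:AL}, some subsequence of $(a_n)$ converges $\mathbb P$-almost everywhere to $g$. A weakly convergent sequence in $L_p(\mathbb P)$ is uniformly integrable — by H\"older's inequality when $p>1$ and by the Dunford--Pettis theorem when $p=1$ — so Vitali's theorem identifies its almost-everywhere limit with its weak limit, giving $g=0$. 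Hence the Ces\`aro means of every subsequence of $(y_n)$ uo-converge to $0$.

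The one genuinely delicate step is to upgrade this uo-nullity to norm-nullity, and it cannot be done termwise: $y_n=n^{1/p}\one_{[0,1/n]}$ in $L_p(0,1)$ is weakly null and uo-null, with uo-null but not norm-null Ces\`aro means. My plan is to invoke the subsequence-splitting lemma for $(y_n)$: after passing to a further subsequence one writes $y_{n_k}=u_k+v_k$, where the $v_k$ are pairwise disjoint and supported on sets of measure tending to $0$, and $\{\abs{u_k}^p:k\}$ is uniformly integrable. For the disjoint part, $\bignorm{\tfrac1m\sum_{k=1}^m v_k}=\tfrac1m\bigl(\sum_{k=1}^m\norm{v_k}^p\bigr)^{1/p}\le\bigl(\sup_k\norm{v_k}\bigr)m^{1/p-1}\to0$ when $p>1$, while when $p=1$ the sequence $(y_n)$ is itself uniformly integrable (it is weakly null in $L_1(\mathbb P)$), so $\norm{v_k}\to0$ and the Ces\`aro means of $(v_k)$ are norm-null all the same. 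For the $u_k$-part, $\tfrac1m\sum_{k=1}^m u_k$ is the uo-null Ces\`aro mean of $(y_{n_k})$ minus the norm-null (hence uo-null) Ces\`aro mean of $(v_k)$, so it is uo-null; as it lies in the convex hull of $\{u_k\}$, whose $p$-th powers remain uniformly integrable, Vitali's theorem and the almost-everywhere description of Section~\ref{sec:AL} make it norm-null. Adding the two parts, the Ces\`aro means of $(y_{n_k})$ — and of any of its further subsequences — converge to $0$ in norm, as required. I expect the bookkeeping to concentrate in arranging the splitting so as to serve all $p\in[1,\infty)$ at once and in the ``uo-null plus uniform integrability implies norm-null'' step; the genuinely hard point, namely making the conclusion hold \emph{uniformly over all further subsequences}, has already been absorbed into the pre-Koml\'os property.
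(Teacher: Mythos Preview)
Your approach is close to the paper's: the paper derives Theorem~\ref{BS} from Proposition~\ref{splitting} (subsequence splitting plus DWBSP gives WBSP), whose proof rests on the pre-Koml\'os property via Lemma~\ref{aobdd-BSP} and on Theorem~\ref{uo-aobdd}, which is precisely the abstract Banach-lattice form of the Vitali step you carry out by hand. You split \emph{after} applying Koml\'os, the paper splits \emph{before}; you identify the uo-limit as $0$ explicitly via Vitali, while in the paper this drops out automatically since the Ces\`aro means of a weakly null sequence are weakly null. Your handling of the disjoint piece is exactly the content of ``$L_p$ has DWBSP (and DBSP for $p>1$)'', which the paper cites as obvious.

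There is one actual slip. In the line ``the norm-null (hence uo-null) Ces\`aro mean of $(v_k)$'' you use an implication that is false: norm convergence does not imply uo-convergence, even in $L_p(\mathbb P)$ --- the typewriter sequence in $L_1[0,1]$ is norm-null but converges almost nowhere. The conclusion you need \emph{is} true here, but for a different reason: since $(v_k)$ is disjoint, Corollary~\ref{dis} gives $v_k\xrightarrow{\rm uo}0$, and then Corollary~\ref{uo-Cesaro-uo0} makes the Ces\`aro means of $(v_k)$ --- and of any subsequence --- uo-null. With that patch your argument goes through, including the hereditary statement for further subsequences. A smaller wording point: you invoke the ``pre-Koml\'os property'' but then assert uo-\emph{convergence} rather than just uo-Cauchyness; since $L_p(\mathbb P)$ is a KB-space you actually get the full Koml\'os property by Corollary~\ref{kb}, which is what you are using.
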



A Banach space is said to have the \term{(weak) Banach-Saks property} if every bounded (respectively, weakly null) sequence has a subsequence whose Ces\`aro means converge in norm. We study these properties and their ``disjoint'' variants in Banach lattices. We show, in particular, that Banach lattices with the Positive Schur Property have the weak Banach-Saks property. This immediately implies Theorem 5.7(i) in \cite{Dodds:04} that every separable Lorentz space has the weak Banach-Saks property.

Uo-convergence also provides a new and efficient way of handling domination problems of (weakly) Banach-Saks operators. We use it to develop short proofs of some of the results of \cite{Flores:06,Flores:08}, as well as of some new domination results for weakly Banach-Saks operators. We also present a variant of Kade\v c-Pe{\l}czy\'nski dichotomy in terms of uo-convergence.

\section{Order convergence and Regular sublattices}
\label{sec:reg}

Throughout this paper, $X$ stands for a vector lattice. We refer to \cite{Aliprantis:06,Aliprantis:03,Abramovich:02} for unexplained terminology on vector and Banach lattices. All vector lattices are assumed to be Archimedean. We recall a few standard definitions.

\begin{definition}\label{def:oconv}
  A net $(x_\alpha)_{\alpha\in\Gamma}$ in a vector lattice $X$ is said to \term{converge in order} to $x\in X$, written as $x_\alpha\xrightarrow{\rm o}x$, if there exists another net $(a_\gamma)_{\gamma\in \Lambda}$ in $X$ satisfying $a_\gamma\downarrow 0$ and for any $\gamma\in\Lambda$ there exists $\alpha_0\in \Gamma$ such that $\abs{x_\alpha-x}\leq a_\gamma$ for all $\alpha\geq \alpha_0$. We say that a net $(x_\alpha)$ is \term{order Cauchy} if the double net $(x_\alpha-x_\beta)_{(\alpha,\beta)}$ converges in order to zero.
\end{definition}

\begin{remark}\label{ocompl-oconv}
It is easy to see that for an order bounded net $(x_\alpha)$ in an order complete vector lattice,
\begin{displaymath}
  x_\alpha\xrightarrow{\rm o}x
  \quad\mbox{iff}\quad
  \inf_\alpha\sup_{\beta\ge\alpha}\abs{x_\beta-x}=0
  \quad\mbox{iff}\quad
  x=\inf_\alpha\sup_{\beta\ge\alpha}x_\beta=\sup_\alpha\inf_{\beta\ge\alpha}x_\beta.
\end{displaymath}
It follows that the dominating net $(a_\gamma)$ in Definition~\ref{def:oconv} may be chosen over the same index set as the original net. In case of a $\sigma$-order complete vector lattice, the same holds for sequences.
\end{remark}

The following fact is standard. It follows easily from the double equality in Remark~\ref{ocompl-oconv}; order boundedness is obtained by passing to a tail.

\begin{proposition}\label{oconv-oCauchy}
  Every order Cauchy net in an order complete vector lattice is order convergent. Every order Cauchy sequence in a $\sigma$-order complete vector lattice is order convergent.
\end{proposition}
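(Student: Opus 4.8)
The plan is to reduce everything to the double equality in Remark~\ref{ocompl-oconv}. Suppose $(x_\alpha)_{\alpha\in\Gamma}$ is an order Cauchy net in an order complete vector lattice, so that the double net $(x_\alpha-x_\beta)_{(\alpha,\beta)}$ converges in order to $0$; fix a witnessing net $a_\gamma\downarrow 0$ such that for each $\gamma$ there is a pair $(\alpha_\gamma,\beta_\gamma)$ with $\abs{x_\alpha-x_\beta}\le a_\gamma$ whenever $\alpha\ge\alpha_\gamma$ and $\beta\ge\beta_\gamma$.

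First I would pass to a tail to secure order boundedness: fixing one index $\gamma_0$ and some $\alpha_1$ above both $\alpha_{\gamma_0}$ and $\beta_{\gamma_0}$ gives $x_{\alpha_1}-a_{\gamma_0}\le x_\alpha\le x_{\alpha_1}+a_{\gamma_0}$ for all $\alpha\ge\alpha_1$. Since order convergence of a net depends only on its tails, it is enough to prove that this order bounded tail is order convergent, so we may assume that $(x_\alpha)$ itself is order bounded.

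Now, using order completeness, put $y_\alpha=\sup_{\beta\ge\alpha}x_\beta$ and $z_\alpha=\inf_{\beta\ge\alpha}x_\beta$; these decrease and increase respectively, and directedness of $\Gamma$ forces $z_{\alpha'}\le y_\alpha$ for all $\alpha,\alpha'$ (pass through a common upper bound of $\alpha$ and $\alpha'$), so that $x':=\sup_\alpha z_\alpha\le\inf_\alpha y_\alpha=:x$, all four suprema/infima existing by order completeness together with order boundedness. For each $\gamma$, choosing $\alpha\ge\alpha_\gamma,\beta_\gamma$ we get $y_\alpha-z_\alpha=\sup_{\beta,\beta'\ge\alpha}(x_\beta-x_{\beta'})\le a_\gamma$, hence $0\le x-x'\le a_\gamma$; as $a_\gamma\downarrow 0$ this forces $x=x'$, i.e.\ $x=\inf_\alpha\sup_{\beta\ge\alpha}x_\beta=\sup_\alpha\inf_{\beta\ge\alpha}x_\beta$, and Remark~\ref{ocompl-oconv} yields $x_\alpha\xrightarrow{\rm o}x$. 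The sequential statement is proved verbatim, with $\beta$ ranging over $\mathbb N$ and the sequential version of Remark~\ref{ocompl-oconv} invoked at the end; $\sigma$-order completeness is precisely what makes the countably many suprema and infima $y_n$, $z_n$, $x$, $x'$ available.

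I do not anticipate a real obstacle. The only points needing care are: checking that the relevant suprema and infima exist, which is where order (resp.\ $\sigma$-order) completeness enters and is the reason for first cutting down to an order bounded tail; the identity $\sup_{\beta\ge\alpha}x_\beta-\inf_{\beta\ge\alpha}x_\beta=\sup_{\beta,\beta'\ge\alpha}(x_\beta-x_{\beta'})$; and the observation that passing to a tail changes neither order convergence nor its limit.
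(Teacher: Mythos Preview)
Your proposal is correct and follows exactly the approach sketched in the paper: the paper's ``proof'' is the one-line remark that the result follows from the double equality in Remark~\ref{ocompl-oconv}, with order boundedness obtained by passing to a tail, and you have simply fleshed out those two steps in detail. Nothing needs to change.
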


%


\begin{definition}
  A sublattice $Y$ of a vector lattice $X$ is said to be
\begin{itemize}
\item  \term{order dense} if for every $0<x\in X$ there exists $0<y\in Y$ such that $y\le x$;
\item \term{dense with respect to order convergence} if every vector in $X$ is the order limit of a net in~$Y$;
\item \term{majorizing} if for every $0<x\in X$ there exists $y\in Y$ such that $x\le y$;
\item \term{regular} if for every subset $A$ of~$Y$, $\inf A$ is the same in $X$ and in $Y$ whenever $\inf A$ exists in~$Y$.
\end{itemize}
\end{definition}

The following fact is straightforward; see, e.g.,~\cite[Theorem~1.20]{Aliprantis:03}.

\begin{lemma}\label{regular}
  Let $Y$ be a sublattice of~$X$. The following are equivalent.
  \begin{enumerate}
  \item $Y$ is regular;
  \item If $\sup A$ exists in $Y$ then $\sup A$ exists in $X$ and the
    two suprema are equal;
  \item $y_\alpha\xrightarrow{\rm o}y$ in $Y$ implies $y_\alpha\xrightarrow{\rm o}y$ in~$X$;
  \item $y_\alpha \downarrow 0$ in $Y$ implies $y_\alpha \downarrow 0$ in~$X$;
  \end{enumerate}
\end{lemma}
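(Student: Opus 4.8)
The plan is to establish $(1)\Leftrightarrow(2)$ together with the cycle $(1)\Rightarrow(4)\Rightarrow(3)\Rightarrow(1)$, using throughout only that a sublattice inherits both the order and the finite lattice operations of the ambient space, so that for $y\in Y$ the vectors $y^+,y^-,\abs{y}$ and the inequalities between elements of $Y$ mean the same thing in $Y$ and in $X$.

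For $(1)\Leftrightarrow(2)$: given $A\subseteq Y$, note that $\sup A$ exists (in $Y$, or in $X$) precisely when $\inf(-A)$ exists there, and then $\sup A=-\inf(-A)$; thus condition (2) is nothing but condition (1) applied to the set $-A$. For $(1)\Rightarrow(4)$: if $y_\alpha\downarrow0$ in $Y$ then $\inf_\alpha y_\alpha=0$ as computed in $Y$, so regularity applied to $A=\{y_\alpha\}$ gives $\inf_\alpha y_\alpha=0$ as computed in $X$; the net remains decreasing in $X$, hence $y_\alpha\downarrow0$ in $X$. For $(4)\Rightarrow(3)$: if $y_\alpha\xrightarrow{\rm o}y$ in $Y$, pick a witnessing net $(a_\gamma)$ in $Y$ with $a_\gamma\downarrow0$ in $Y$ and, for each $\gamma$, $\abs{y_\alpha-y}\le a_\gamma$ eventually in $\alpha$; by (4) we have $a_\gamma\downarrow0$ in $X$, and the inequalities $\abs{y_\alpha-y}\le a_\gamma$ persist in $X$, so the same net $(a_\gamma)$ witnesses $y_\alpha\xrightarrow{\rm o}y$ in $X$.

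The substance is in $(3)\Rightarrow(1)$. Let $A\subseteq Y$ and suppose $y:=\inf A$ exists in $Y$; since $y\le a$ in $Y$, hence in $X$, for every $a\in A$, the vector $y$ is a lower bound of $A$ in $X$, and it remains to see it is the greatest one. Form the set $B$ of all finite infima $a_1\wedge\dots\wedge a_n$ of elements of $A$: it is downward directed ($p,q\in B$ have $p\wedge q\in B$ below both), is contained in $Y$, and has the same lower bounds as $A$, so $\inf B=y$ in $Y$; in other words $b\downarrow y$ in $Y$. The net $(b)_{b\in B}$ then order-converges to $y$ in $Y$, with dominating net $(b-y)_{b\in B}\downarrow0$, so (3) gives $b\xrightarrow{\rm o}y$ in $X$. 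Now let $z\in X$ be any lower bound of $A$ in $X$; then $z\le b$, that is $b-z\ge0$, for every $b\in B$, and passing to the order limit yields $y-z\ge0$. Hence $z\le y$, so $y=\inf A$ in $X$, which is exactly regularity.

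The one point requiring a moment's care is that an order limit of non-negative vectors is non-negative: if $x_\alpha\ge0$ and $\abs{x_\alpha-x}\le a_\gamma$ eventually with $a_\gamma\downarrow0$, then $x\ge x_\alpha-a_\gamma\ge-a_\gamma$ for every $\gamma$, so $x^-\le a_\gamma$ for every $\gamma$ and therefore $x^-\le\inf_\gamma a_\gamma=0$, i.e.\ $x\ge0$. No genuine obstacle is expected beyond this; the remaining steps are bookkeeping about the coincidence of order and lattice operations between a sublattice and its ambient space.
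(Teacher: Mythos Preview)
Your proof is correct. The paper itself does not prove this lemma; it simply declares the fact ``straightforward'' and refers the reader to \cite[Theorem~1.20]{Aliprantis:03}, so there is no detailed argument to compare against. Your scheme $(1)\Leftrightarrow(2)$ plus the cycle $(1)\Rightarrow(4)\Rightarrow(3)\Rightarrow(1)$ is a clean way to organize the proof, and the one nontrivial direction $(3)\Rightarrow(1)$---turning an arbitrary $A$ with $\inf A=y$ in $Y$ into a decreasing net via finite infima, pushing order convergence to $X$ via (3), and then squeezing any $X$-lower bound $z$ below $y$ by the positivity of order limits---is handled carefully. The final verification that $X_+$ is order-closed is exactly the small point that deserves mention, and your argument for it is fine.
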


It is easy to see that every ideal is regular. Furthermore, order dense sublattices are regular by \cite[Theorem~1.23]{Aliprantis:03}. It is shown in \cite[Theorem~1.27]{Aliprantis:03} that a sublattice $Y$ is order dense in $X$ iff  $a=\sup[0,a]\cap Y$ for every $a\in X_+$,
where the $\sup$ is evaluated in~$X$. Therefore, if $Y$ is order dense in $X$ then $Y$ is dense with respect to order convergence. The converse fails in general.

\begin{example}
  Let $X$ be the set of real-valued functions on $[0,1]$ of form
  $f=g+h$ where $g$ is continuous and $h$ vanishes except at finitely
  many points. Being a sublattice of $\mathbb R^{[0,1]}$, $X$ is a
  vector lattice. Let $Y=C[0,1]$. Clearly, $Y$ is a sublattice
  of~$X$. It is easy to see that $Y$ is dense with respect to order
  convergence (even sequentially), but $Y$ is not order dense in $X$:
  there is no $g\in Y$ with $0<g\le \chi_{\{\frac12\}}$. Observe also
  that $Y$ is not regular in~$X$. Indeed, let $(f_n)$ be a decreasing
  sequence in $Y_+$ such that $f_n(\frac12)=1$ for every $n$ and
  $f_n(t)\to 0$ for every $t\ne\frac12$. Then $f_n\downarrow 0$ in $Y$
  but $f_n\downarrow\chi_{\{\frac12\}}$ in~$X$.
\end{example}

\begin{lemma}\label{od-maj}
  For a sublattice $Y$ in a vector lattice~$X$, the following are equivalent.
  \begin{enumerate}
  \item\label{od-maj-maj} $Y$ is both order dense and majorizing in~$X$;
  \item\label{od-maj-inf} For every $x\in X$ one has $x=\inf\{y\in Y\mid
    y\ge x\}$;
  \item\label{od-maj-sup} For every $x\in X$ one has $x=\sup\{y\in Y\mid y\le x\}$.
  \end{enumerate}
\end{lemma}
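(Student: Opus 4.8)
The plan is to prove the cycle of implications $(1)\Rightarrow(2)\Rightarrow(1)$ together with the symmetric pair $(1)\Leftrightarrow(3)$; in fact, once we show $(1)\Leftrightarrow(2)$, statement $(3)$ follows by applying the equivalence to $-x$ and using that $Y$ is a subspace. So the real content is $(1)\Leftrightarrow(2)$. For $(1)\Rightarrow(2)$, fix $x\in X$ and set $D=\{y\in Y\mid y\ge x\}$; by majorizing, applied to $x^+$ (and since $0\in Y$ handles the trivial bound), $D$ is nonempty. Clearly $x$ is a lower bound for $D$, so it suffices to show it is the greatest one: if $z\in X$ satisfies $z\le y$ for all $y\in D$, we must show $z\le x$, i.e.\ $(z-x)^+=0$. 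Suppose not, so $0<(z-x)^+=:w\in X$. By order density there is $0<u\in Y$ with $u\le w\le (z-x)^+$. The idea now is to produce an element of $D$ that drops below $z$ on the "support" of $u$, contradicting that $z$ is a lower bound for $D$; concretely, pick any $y_0\in D$ and consider $y_0-u$... but one must check this still dominates $x$, which it need not. The cleaner route: use that $y_0\wedge(x+ (z-x)^+ - u)$ or rather work with $y_1 := y_0 - (u\wedge (y_0-x))$ and verify $y_1\ge x$, $y_1\in Y$, while $y_1\not\ge z$ because $y_0-y_1 = u\wedge(y_0-x)$ has a nonzero component where $z-x\ge u$. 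I expect this disjointness/support bookkeeping — showing the modified element strictly violates the lower bound $z$ — to be the main obstacle, and it should be dispatched by a short computation with lattice identities rather than any appeal to representation.

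For the converse $(2)\Rightarrow(1)$, both halves are easy. Majorizing is immediate: given $0<x\in X$, the set $\{y\in Y\mid y\ge x\}$ has infimum $x$ and in particular is nonempty, so some $y\in Y$ satisfies $y\ge x$. For order density, let $0<x\in X$; we want $0<v\in Y$ with $v\le x$. Apply $(2)$ to $-x$ (equivalently, note $(2)$ forces $(3)$ as above) to get $-x=\inf\{y\in Y\mid y\ge -x\}$, hence $x=\sup\{y\in Y\mid y\le x\}$; since this supremum is $x>0$ and $0$ is a candidate, the set must contain some $y>0$ (otherwise the supremum would be $\le 0$), and that $y$ lies in $[0,x]\cap Y$. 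This uses nothing beyond the definitions and the fact that $Y$ is a linear sublattice, so these directions are routine.

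Alternatively, one can shortcut the whole argument by invoking \cite[Theorem~1.27]{Aliprantis:03}, quoted in the paragraph preceding the lemma: $Y$ is order dense in $X$ iff $a=\sup([0,a]\cap Y)$ in $X$ for every $a\in X_+$. Combined with the observation that majorizing together with order density is exactly what lets one "reflect" suprema into infima via $x\mapsto -x$ and translation, $(1)\Leftrightarrow(3)$ becomes almost formal, and then $(1)\Leftrightarrow(2)$ follows by the same sign flip. I would present the direct argument for self-containedness but remark that this citation gives a quicker path. The one genuinely delicate point, regardless of route, remains verifying in $(1)\Rightarrow(2)$ that no element strictly below $x$ can be a lower bound of $\{y\in Y\mid y\ge x\}$ — everything else is bookkeeping with $\sup$, $\inf$, and the linear structure of $Y$.
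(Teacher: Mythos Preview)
Your alternative route via \cite[Theorem~1.27]{Aliprantis:03} is exactly what the paper does, and it works cleanly: pick $y_0\in Y$ with $y_0\ge x$ (majorizing), observe that $[x,y_0]\cap Y=\{y_0-v: v\in[0,y_0-x]\cap Y\}$, and compute
\[
\inf\bigl([x,y_0]\cap Y\bigr)=y_0-\sup\bigl([0,y_0-x]\cap Y\bigr)=y_0-(y_0-x)=x.
\]

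Your primary approach to $(1)\Rightarrow(2)$, however, has a gap. First, your worry that $y_0-u$ might fail to dominate $x$ is unfounded: since $z\le y_0$ (as $z$ is a lower bound for $D$ and $y_0\in D$) you get $(z-x)^+\le(y_0-x)^+=y_0-x$, hence $u\le y_0-x$ and $y_0-u\ge x$. So $y_0-u\in D$ after all, and your $y_1=y_0-(u\wedge(y_0-x))$ is simply $y_0-u$. The real obstacle is the next step: from $z\le y_0-u$ you only learn $u\le y_0-z$, and a \emph{single} such $u$ gives no contradiction. What works is using the whole family: $u\le y_0-z$ for every $u\in[0,y_0-x]\cap Y$ forces $y_0-x=\sup([0,y_0-x]\cap Y)\le y_0-z$, hence $z\le x$. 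But that is precisely the Theorem~1.27 computation again, so your ``direct'' argument collapses into the paper's proof rather than giving an independent one.

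There is also a small slip in your $(2)\Rightarrow(1)$ order-density step: ``no $y>0$ in the set'' does not imply ``every $y\le 0$'' in a vector lattice, so ``$\sup\le 0$'' does not follow as stated. The fix (which the paper uses) is to pass to $y^+$: for $y\in Y$ with $y\le x$ one has $y^+\in Y$ and $y^+\le x^+=x$, so $y^+$ lies in the same set; if every such $y^+$ were zero then indeed every $y\le 0$ and the supremum would be $\le 0$, contradicting $\sup=x>0$.
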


\begin{proof}
  It is straightforward (replacing $x$ with $-x$) that
  \eqref{od-maj-inf}$\Leftrightarrow$\eqref{od-maj-sup}.  To show that
  \eqref{od-maj-maj}$\Rightarrow$\eqref{od-maj-inf}, put $A=\{y\in
  Y\mid y\ge x\}$. Since $Y$ is majorizing, there exists $y_0\in Y$
  such that $x\le y_0$. In particular, $A$ is non-empty and
  $[x,y_0]\cap Y\subseteq A$. Then
  \begin{displaymath}
    \inf[x,y_0]\cap Y
    =y_0-\sup[0,y_0-x]\cap Y
    =y_0-(y_0-x)=x,
  \end{displaymath}
  hence $\inf A=x$.

  It is left to deduce \eqref{od-maj-maj} from the other two
  statements. First, note that~\eqref{od-maj-inf} implies that $Y$
  is majorizing. Fix $x\in X_+$. By \eqref{od-maj-sup}, $x=\sup B$
  where $B=\{y\in Y\mid y\le x\}$. On the other hand, for every $y\in
  B$ we have $y\le y^+\in [0,x]\cap Y$. It follows that $x=\sup
  [0,x]\cap Y$.
\end{proof}

\medskip

Note that even when $Y$ is a regular sublattice of~$X$, order convergence in $X$ generally does not imply order convergence in~$Y$. For example, $c_0$ is a regular sublattice of $\ell_\infty$, $e_n\xrightarrow{\rm o}0$ in $\ell_\infty$ but not in~$c_0$. We will see, however, that order convergence in $X$ does imply order convergence in $Y$ under certain additional assumptions. The following theorem is essentially in~\cite{Abramovich:05}. We provide a proof for the convenience of the reader.

\begin{theorem}\label{odense-maj-o-lim}
  Suppose that $Y$ is order dense and majorizing.  Then
  $x_\alpha\xrightarrow{\rm o}0$ in $Y$ iff $x_\alpha\xrightarrow{\rm o}0$ in
  $X$ for any net $(x_\alpha)$ in~$Y$.
\end{theorem}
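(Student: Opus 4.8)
The plan is to split the equivalence into its two implications, the first being immediate and the second requiring a net-surgery argument.

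\emph{Order convergence in $Y$ implies order convergence in $X$.} Since $Y$ is order dense, it is regular in $X$ by \cite[Theorem~1.23]{Aliprantis:03}, so this implication is precisely statement (3) of Lemma~\ref{regular}; the majorizing hypothesis is not needed here.

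\emph{Order convergence in $X$ implies order convergence in $Y$.} Suppose $(x_\alpha)_{\alpha\in\Gamma}$ is a net in $Y$ with $x_\alpha\xrightarrow{\rm o}0$ in $X$, witnessed by a net $a_\gamma\downarrow 0$ in $X$ (over an index set $\Lambda$) with the property that for each $\gamma$ one has $\abs{x_\alpha}\le a_\gamma$ eventually. The obstacle is that $(a_\gamma)$ lives in $X$ and need not lie in $Y$, so I must manufacture from it a dominating net inside $Y$ that still decreases to $0$ \emph{in $Y$}. For each $\gamma$ put $A_\gamma=\{y\in Y\mid y\ge a_\gamma\}$; this set is non-empty because $Y$ is majorizing, and by Lemma~\ref{od-maj} its infimum, computed in $X$, equals $a_\gamma$. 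I would then take as index set $\Delta$ the set of pairs $(\gamma,y)$ with $\gamma\in\Lambda$ and $y\in A_\gamma$, ordered by declaring $(\gamma_1,y_1)\le(\gamma_2,y_2)$ iff $\gamma_1\le\gamma_2$ in $\Lambda$ and $y_1\ge y_2$, and set $b_{(\gamma,y)}=y$. Directedness of $\Delta$ follows by choosing $\gamma_3\ge\gamma_1,\gamma_2$ in $\Lambda$ and observing that $a_{\gamma_3}\le y_1\wedge y_2$, so that $(\gamma_3,y_1\wedge y_2)\in\Delta$ dominates both given pairs; and $(b_\delta)_{\delta\in\Delta}$ is visibly a decreasing net in $Y$.

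The crux — the step I expect to need the most care — is checking that $b_\delta\downarrow 0$ \emph{in $Y$}. That $0$ is a lower bound is clear. If $z\in Y$ is any lower bound of $(b_\delta)$, then for each fixed $\gamma$ the element $z$ is a lower bound (in $X$) of $A_\gamma$, hence $z\le\inf A_\gamma=a_\gamma$; letting $\gamma$ range over $\Lambda$ yields $z\le\inf_\gamma a_\gamma=0$. This is precisely where both hypotheses on $Y$ enter, through Lemma~\ref{od-maj}. Finally, for the domination requirement: given $\delta=(\gamma,y)\in\Delta$, choose $\alpha_0$ with $\abs{x_\alpha}\le a_\gamma$ for all $\alpha\ge\alpha_0$; then $\abs{x_\alpha}\le a_\gamma\le y=b_\delta$ for such $\alpha$. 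Therefore $x_\alpha\xrightarrow{\rm o}0$ in $Y$, which completes the argument.
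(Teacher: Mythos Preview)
Your proof is correct and takes essentially the same approach as the paper. The only difference is cosmetic: the paper collapses your index set $\Delta$ into the single downward-directed set $A=\bigcup_\gamma A_\gamma=\{y\in Y\mid y\ge a_\gamma\text{ for some }\gamma\}$ and views $A$ itself (ordered by reverse inclusion) as the dominating net, whereas you keep track of the pair $(\gamma,y)$; both constructions yield the same family of upper bounds in $Y$, and the verifications that this family decreases to $0$ and dominates $(x_\alpha)$ are identical.
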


\begin{proof}
  Since $Y$ is regular, the forward implication is obvious from
  Lemma~\ref{regular}. Suppose now that
  $x_\alpha\xrightarrow{\rm o}0$ in~$X$. Let $(a_\gamma)$ be a net in $X$
  as in Definition~\ref{def:oconv}.  Put
  \begin{displaymath}
    A=\bigl\{y\in Y\mid y\ge a_\gamma\text{ for some }\gamma\bigr\}.
  \end{displaymath}
  Then $\inf A=0$ in $X$ and, therefore, in~$Y$. Indeed, if $z\in X$ and $0\le z\le A$ then for
  every $\gamma$ we have $z\le\{y\in Y\mid y\ge a_\gamma\}$, so that
  $z\le a_\gamma$ by Lemma~\ref{od-maj}. Hence, $z=0$.

  Since $A$ is directed downwards, we may view $A$ as a decreasing net
  in~$Y$. It is easy to see that this net dominates $(x_\alpha)$ in
  the sense of Definition~\ref{def:oconv}.
\end{proof}

For a vector lattice~$X$, we write $X^\delta$ for its order (or Dedekind) completion. Recall from \cite[Theorem~1.41]{Aliprantis:03} that $X^\delta$ is the unique (up to a lattice isomorphism) order complete vector lattice that contains $X$ as a majorizing and order dense sublattice. In particular, $X$ is a regular sublattice of~$X^\delta$.

\begin{corollary}[\cite{Abramovich:05}]\label{AS}
  For every net $(x_\alpha)$ in~$X$, $x_\alpha\xrightarrow{\rm o}0$ in $X$ iff $x_\alpha\xrightarrow{\rm o}0$ in~$X^\delta$.
\end{corollary}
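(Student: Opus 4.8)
The plan is simply to specialize Theorem~\ref{odense-maj-o-lim}. As recalled just above the statement, $X^\delta$ is by construction an order complete vector lattice that contains $X$ as a sublattice which is simultaneously order dense and majorizing (see \cite[Theorem~1.41]{Aliprantis:03}). Hence the hypotheses of Theorem~\ref{odense-maj-o-lim} are satisfied when the ambient lattice is taken to be $X^\delta$ and the sublattice is taken to be $Y=X$. Applying that theorem to an arbitrary net $(x_\alpha)$ in $X$ gives at once that $x_\alpha\xrightarrow{\rm o}0$ in $X$ iff $x_\alpha\xrightarrow{\rm o}0$ in $X^\delta$, which is the assertion.

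There is essentially no obstacle here: the entire content has already been packaged into Theorem~\ref{odense-maj-o-lim}, and the only thing to check — that $X$ embeds in $X^\delta$ as an order dense and majorizing sublattice — is precisely the defining property of the Dedekind completion. If one preferred to avoid the reference, one could reproduce the short argument of Theorem~\ref{odense-maj-o-lim} in this setting: the forward implication is immediate from Lemma~\ref{regular} since $X$ is regular in $X^\delta$, and for the converse, given a net $(a_\gamma)$ in $X^\delta$ witnessing $x_\alpha\xrightarrow{\rm o}0$ there, the set $A=\{y\in X\mid y\ge a_\gamma\text{ for some }\gamma\}$ is downward directed, has infimum $0$ in $X^\delta$ and hence in $X$, and dominates $(x_\alpha)$ in the sense of Definition~\ref{def:oconv}. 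Either way the proof is a two-line consequence of what precedes.
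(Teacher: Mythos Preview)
Your proposal is correct and matches the paper's approach exactly: the corollary is obtained by applying Theorem~\ref{odense-maj-o-lim} to the sublattice $Y=X$ inside the ambient lattice $X^\delta$, using that $X$ sits in $X^\delta$ as an order dense and majorizing sublattice. The paper gives no separate proof for this corollary precisely because it is this immediate specialization.
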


\begin{theorem}\label{reg_prop1}
  Let $Y$ be a regular sublattice of a vector lattice~$X$. Then
  $Y^\delta$ is a regular sublattice of~$X^\delta$.
\end{theorem}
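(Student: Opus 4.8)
The plan is to produce a concrete copy of $Y^\delta$ inside $X^\delta$ and then verify that this copy is regular. Recall that $X$ is a regular sublattice of $X^\delta$, and that regularity is transitive: if $y_\alpha\xrightarrow{\rm o}y$ in $Y$ then the same holds in $X$, hence in $X^\delta$, so $Y$ is regular in $X^\delta$ by Lemma~\ref{regular}. For $\xi\in Y^\delta$ put $D_\xi=\{y\in Y:y\le\xi\}$ and define $\phi(\xi)=\sup_{X^\delta}D_\xi$; the supremum exists because $D_\xi$ is bounded above in $X^\delta$ ($Y$ is majorizing in $Y^\delta$), and $\phi$ is the identity on $Y$. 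I would then check that $\phi$ is a lattice embedding of $Y^\delta$ into $X^\delta$ by the classical extension-to-the-completion argument: since $Y$ is order dense and majorizing in $Y^\delta$, Lemma~\ref{od-maj} gives $\zeta=\sup_{Y^\delta}\{y\in Y:y\le\zeta\}$ for every $\zeta\in Y^\delta$; expanding $y=(y\wedge\xi)\vee(y\wedge\eta)$ (for $y\in Y$ with $y\le\xi\vee\eta$) as a supremum over $D_\xi\vee D_\eta$ and transferring that supremum from $Y^\delta$ to $X^\delta$ by regularity of $Y$ in $X^\delta$ yields $\phi(\xi\vee\eta)=\phi(\xi)\vee\phi(\eta)$, while additivity, homogeneity and preservation of $\wedge$ follow the same pattern; injectivity holds because $\xi<\eta$ in $Y^\delta$ produces $0<y_1\le\eta-\xi$ with $y_1\in Y$, whence $\phi(\eta)\ge\phi(\xi)+y_1>\phi(\xi)$. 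Thus the image $\phi(Y^\delta)$ is an order complete sublattice of $X^\delta$ in which $Y$ is order dense and majorizing, and we identify it with $Y^\delta$.

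For the regularity of $Y^\delta$ in $X^\delta$, by Lemma~\ref{regular} it suffices to show that $z_\alpha\downarrow 0$ in $Y^\delta$ forces $z_\alpha\downarrow 0$ in $X^\delta$. Let $v\in X^\delta$ be a lower bound of $\{z_\alpha\}$; replacing $v$ by $v^+$ (still a lower bound, as each $z_\alpha\ge 0$) we may assume $0\le v\le z_\alpha$ for all $\alpha$, and must show $v=0$. Applying Lemma~\ref{od-maj} inside $Y^\delta$ gives $z_\alpha=\inf_{Y^\delta}\{y\in Y:y\ge z_\alpha\}$ for each $\alpha$. Set $S=\{y\in Y:y\ge z_\alpha\text{ for some }\alpha\}$. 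If $u\in Y^\delta$ is a lower bound of $S$, then for each fixed $\alpha$ it is a lower bound of $\{y\in Y:y\ge z_\alpha\}\subseteq S$, so $u\le z_\alpha$; hence $u\le\inf_\alpha z_\alpha=0$, i.e.\ $\inf_{Y^\delta}S=0$. Since $0\in Y$ and $S\subseteq Y$, the identical computation gives $\inf_Y S=0$, and then regularity of $Y$ in $X^\delta$ yields $\inf_{X^\delta}S=0$. Finally every $y\in S$ satisfies $y\ge z_\beta\ge v$ for the appropriate $\beta$, so $v$ is a lower bound of $S$ in $X^\delta$ and therefore $v\le\inf_{X^\delta}S=0$; thus $v=0$, as needed.

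The substantive content is the middle paragraph, and it is short. I expect the main obstacle to be the first paragraph: the verification that $\phi$ is a well-defined lattice embedding. This is essentially the standard fact that a regular sublattice of an order complete vector lattice carries its Dedekind completion inside that lattice; if a convenient reference is available (in the circle of ideas around \cite{Abramovich:05} and \cite[Theorem~1.41]{Aliprantis:03}) one could simply invoke it and proceed directly to the regularity computation.
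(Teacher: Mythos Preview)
Your proposal is correct, but the route differs from the paper's in two respects.

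Both arguments construct the same embedding $Y^\delta\hookrightarrow X^\delta$: your map $\phi(\xi)=\sup_{X^\delta}\{y\in Y:y\le\xi\}$ is exactly the order continuous extension of the inclusion $Y\hookrightarrow X^\delta$. The paper obtains this map by invoking the extension theorem \cite[Theorem~1.65]{Aliprantis:06}, which delivers linearity and order continuity for free; it then only has to verify that the extension preserves $(\cdot)^+$ (via an approximation-by-nets computation) and is injective. You instead build $\phi$ by hand and sketch the lattice-homomorphism and linearity verifications directly. As you yourself anticipate, this is the laborious part: in particular, additivity of $\phi$ from the supremum definition is fiddlier than ``follows the same pattern'' suggests (one must use Riesz decomposition in $Y^\delta$ and then transfer suprema back through regularity). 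The paper's shortcut is exactly the ``convenient reference'' you were looking for.

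For regularity, the paper simply reads it off from the order continuity of the extension. Your argument is different and more hands-on: it is essentially the proof of Theorem~\ref{odense-maj-o-lim} transplanted --- form $S=\{y\in Y:y\ge z_\alpha\text{ for some }\alpha\}$, show $\inf S=0$ in $Y$, and transfer this to $X^\delta$ via regularity of $Y$. This is correct and has the virtue of being independent of how the embedding was constructed, so it could in principle be paired with any realization of $Y^\delta$ inside $X^\delta$. The paper's approach is shorter overall because the extension theorem does double duty (embedding and regularity at once); yours is more elementary but pays for that with the longer first paragraph.
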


\begin{proof}
  Since $X$ is regular in~$X^\delta$, we conclude that $Y$ is regular in~$X^\delta$. Thus, without loss of generality, we may assume that $X$ is order complete. Let $J\colon Y\to X$ be the inclusion mapping. Then $J$ is order continuous by regularity of~$Y$. By \cite[Theorem~1.65]{Aliprantis:06}, the operator $J$ can extended to an order continuous positive operator $T\colon Y^\delta\to X$. We will show that $T$ is a lattice isomorphism from $Y^\delta$ into~$X$.

Pick any $a\in Y^\delta$. Take two nets $(y_\alpha)$ and $(z_\alpha)$ in $Y$ such that $0\leq y_\alpha\uparrow a^+$ and $0\leq z_\alpha\uparrow a^-$ in~$Y^\delta$. Then it is clear that
$$y_\alpha=Ty_\alpha\xrightarrow{\rm o}T(a^+)\mbox{ in }X.$$
Moreover, since $y_\alpha-z_\alpha\xrightarrow{\rm o} a$ in~$Y^\delta$, we have $y_\alpha-z_\alpha=T(y_\alpha-z_\alpha)\xrightarrow{\rm o}Ta$ in~$X$. Also, since $a^+\wedge a^-=0$, we have $y_\alpha\wedge z_\alpha=0$ in $Y^\delta$ for any~$\alpha$, and hence
$$y_\alpha=(y_\alpha-z_\alpha)^+\xrightarrow{\rm o}(Ta)^+\mbox{ in }X.$$
Therefore, $T(a^+)=(Ta)^+$ for any $a\in Y^\delta$. It follows that $T$ is a lattice homomorphism.

Suppose now that $Ta=0$ for some $a\in Y^\delta$. Since $T$ is a lattice homomorphism, we may assume that $a\geq 0$. Take $(y_\alpha)$ in $Y$ such that $0\leq y_\alpha\uparrow a$ in~$Y^\delta$. Then $0\leq y_\alpha=Ty_\alpha\leq  Ta=0$, implying $y_\alpha=0$ for all~$\alpha$. Hence, $a=0$. This proves that $T$ is one-to-one.

The regularity of $Y^\delta$ in $X$ follows from the order continuity of~$T$.
\end{proof}

\begin{lemma}\label{ocompl-reg}
  Let $Y$ be a regular order complete sublattice of~$X$. Suppose that $y_\alpha\xrightarrow{\rm o}x$ in $X$ for some order bounded net $(y_\alpha)$ in $Y$ and some vector $x\in X$. Then $x\in Y$ and $y_\alpha\xrightarrow{\rm o}x$ in~$Y$.
\end{lemma}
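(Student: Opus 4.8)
The plan is to exploit the order completeness of $Y$ to produce a candidate limit inside $Y$, and then argue that this candidate must coincide with $x$. First I would reduce to the order bounded setting: since $(y_\alpha)$ is order bounded in $Y$, say $\abs{y_\alpha}\le u$ for some $u\in Y_+$ and all $\alpha$ past some index, and since $Y$ is order complete, for each $\alpha$ the vectors $p_\alpha=\sup_{\beta\ge\alpha}y_\beta$ and $q_\alpha=\inf_{\beta\ge\alpha}y_\beta$ exist in $Y$. Then $p_\alpha\downarrow$ and $q_\alpha\uparrow$ in $Y$, and again by order completeness $y:=\inf_\alpha p_\alpha$ and $y':=\sup_\alpha q_\alpha$ exist in $Y$, with $q_\alpha\uparrow y'\le y\downarrow p_\alpha$ (note $q_\alpha \le p_\alpha$ always). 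I would like to show $y=y'=x$.

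The key step is to transport these suprema and infima into $X$ using regularity. Since $Y$ is regular in $X$, the relations $p_\alpha\downarrow y$ and $q_\alpha\uparrow y'$ in $Y$ persist in $X$ by Lemma~\ref{regular}; in particular $p_\alpha\xrightarrow{\rm o}y$ and $q_\alpha\xrightarrow{\rm o}y'$ in $X$. Moreover $p_\alpha\downarrow$ and $q_\alpha\uparrow$ also dominate the tail of $(y_\alpha)$ in the sense that $q_\alpha\le y_\beta\le p_\alpha$ for all $\beta\ge\alpha$, so both $(p_\alpha)$ and $(q_\alpha)$ sandwich $(y_\alpha)$; combined with $p_\alpha-q_\alpha\downarrow$ in $Y$ (hence in $X$) to some limit $w=y-y'\ge 0$, one gets $\abs{y_\alpha - y_\beta}\le p_{\alpha\wedge\beta}-q_{\alpha\wedge\beta}$, so whatever $(y_\alpha)$ does in $X$ it is ``trapped'' between $y$ and $y'$. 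More directly: from $q_\alpha\le y_\alpha\le p_\alpha$ and $q_\alpha\xrightarrow{\rm o}y'$, $p_\alpha\xrightarrow{\rm o}y$ in $X$, and from the hypothesis $y_\alpha\xrightarrow{\rm o}x$ in $X$, I would pass to the order-limit inequalities $y'\le x\le y$ in $X$ (using that order limits respect $\le$ for order convergent nets). So it remains to show $y=y'$, equivalently $w=0$.

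To see $w=0$, I would use that $(y_\alpha)$ is order convergent, hence order Cauchy, in $X$: there is a net $(a_\gamma)\downarrow 0$ in $X$ with $\abs{y_\alpha-y_\beta}\le a_\gamma$ for $\alpha,\beta$ large. Fixing $\gamma$ and taking $\sup_{\beta\ge\alpha}$ then $\inf$ over the relevant indices inside $X$ — legitimate since these suprema/infima computed in $Y$ agree with those in $X$ by regularity — yields $p_\alpha - q_\alpha \le a_\gamma$ for $\alpha$ large, hence $0\le w\le a_\gamma$ for all $\gamma$, so $w=0$. Therefore $y=y'=x$, which proves $x\in Y$; and since $q_\alpha\uparrow x\downarrow p_\alpha$ in $Y$ with $p_\alpha-q_\alpha\downarrow 0$ in $Y$ (because $p_\alpha-q_\alpha\downarrow w=0$ in $X$ and $Y$ is regular, or simply because $q_\alpha\le x\le p_\alpha$ and $p_\alpha - q_\alpha\downarrow 0$ in $Y$ as $Y$ is order complete and the infimum is $0$), we get $\abs{y_\alpha-x}\le p_\alpha-q_\alpha\downarrow 0$ in $Y$, i.e. $y_\alpha\xrightarrow{\rm o}x$ in $Y$.

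I expect the main obstacle to be the bookkeeping around why the infimum $\inf_\alpha(p_\alpha-q_\alpha)$ equals $0$ in $X$ rather than merely being some positive element: this is exactly where order Cauchyness of $(y_\alpha)$ in $X$ and the regularity of $Y$ (to move the $X$-computed suprema/infima to $Y$-computed ones and back) have to be combined carefully. A secondary subtlety is making sure order boundedness in $Y$ is genuinely available — the hypothesis gives $(y_\alpha)$ order bounded in $Y$, which is needed so that $p_\alpha,q_\alpha$ exist in the order complete lattice $Y$; passing to a tail as in Proposition~\ref{oconv-oCauchy} handles any indexing nuisances.
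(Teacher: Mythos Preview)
Your proof is correct and follows the same $\limsup/\liminf$ strategy as the paper: form $p_\alpha=\sup_{\beta\ge\alpha}y_\beta$ and $q_\alpha=\inf_{\beta\ge\alpha}y_\beta$ in $Y$, use regularity to identify these with the corresponding $X$-quantities, and conclude that $x$ is squeezed between $y'=\sup_\alpha q_\alpha$ and $y=\inf_\alpha p_\alpha$, both of which lie in $Y$. The difference is in how one shows $y=y'$. You argue this directly from the order Cauchyness of $(y_\alpha)$ in $X$, carefully moving suprema and infima back and forth via regularity to obtain $p_\alpha-q_\alpha\le a_\gamma$. The paper instead begins by replacing $X$ with its order completion $X^\delta$ (legitimate since $X$ is regular in $X^\delta$ and $Y$ remains regular there); once $X$ is order complete, Remark~\ref{ocompl-oconv} gives $x=\inf_\alpha\sup_{\beta\ge\alpha}y_\beta=\sup_\alpha\inf_{\beta\ge\alpha}y_\beta$ in $X$ immediately, and regularity identifies these with the $Y$-computed versions, finishing the proof in two lines. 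Your route is more self-contained (it avoids invoking $X^\delta$), while the paper's is considerably shorter because the passage to $X^\delta$ absorbs exactly the ``$w=0$'' bookkeeping you flagged as the main obstacle.
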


\begin{proof}
Replacing $X$ with~$X^\delta$, we may assume that $X$ is order complete. By Remark~\ref{ocompl-oconv},  $x=\inf_\alpha\sup_{\beta\ge\alpha}y_\beta=\sup_\alpha\inf_{\beta\ge\alpha}y_\beta$, where the $\sup$ and the $\inf$ are evaluated in~$X$. Since $Y$ is order complete, the the $\sup$ and the $\inf$ exist in~$Y$; they have the same values as in $X$ because $Y$ is regular in~$X$. It follows that $x\in Y$ and $y_\alpha\xrightarrow{\rm o}x$ in~$Y$.
\end{proof}

\begin{corollary}\label{reg-obdd-twoway}
  If $Y$ is a regular sublattice of $X$ then $x_\alpha\xrightarrow{\rm o}0$ in $Y$ iff $x_\alpha\xrightarrow{\rm o}0$ in $X$ for every order bounded net $(x_\alpha)$ in~$Y$.
\end{corollary}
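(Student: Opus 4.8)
The forward implication needs no work: it is exactly the content of Lemma~\ref{regular}, since order convergence in $Y$ to $0$ implies order convergence to $0$ in $X$ for \emph{every} net in $Y$, order bounded or not. So the whole task is the converse, and the plan is to push the problem up to order completions, where Remark~\ref{ocompl-oconv} and Lemma~\ref{ocompl-reg} are available, and then descend again.

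Concretely, suppose $(x_\alpha)$ is an order bounded net in $Y$ with $x_\alpha\xrightarrow{\rm o}0$ in $X$. First I would invoke Corollary~\ref{AS} to get $x_\alpha\xrightarrow{\rm o}0$ in $X^\delta$; thus, after replacing $X$ by $X^\delta$, there is no loss in assuming $X$ is order complete. Next, by Theorem~\ref{reg_prop1}, $Y^\delta$ is a regular sublattice of $X^\delta=X$, and $Y^\delta$ is order complete by construction. Since $(x_\alpha)$ is order bounded in $Y$, it is order bounded in $Y^\delta$. Now apply Lemma~\ref{ocompl-reg} to the regular, order complete sublattice $Y^\delta$ of the order complete lattice $X$: it yields $x_\alpha\xrightarrow{\rm o}0$ in $Y^\delta$.

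It remains to come back down from $Y^\delta$ to $Y$. Here $Y$ sits inside $Y^\delta$ as an order dense and majorizing sublattice, so Theorem~\ref{odense-maj-o-lim} (applied to the net $(x_\alpha)$, which lies in $Y$) gives $x_\alpha\xrightarrow{\rm o}0$ in $Y$, as desired.

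Each step is a direct appeal to a result already proved, so there is no real obstacle; the one point that needs a moment's care is the compatibility of the chains of embeddings $Y\subseteq Y^\delta\subseteq X^\delta$ and $Y\subseteq X\subseteq X^\delta$. This is precisely what is arranged in the proof of Theorem~\ref{reg_prop1}, where $Y^\delta$ is realized inside $X^\delta$ via the extension of the inclusion $Y\hookrightarrow X$, so all three copies of $Y$ agree and the argument above is internally consistent.
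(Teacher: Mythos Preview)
Your proof is correct and follows essentially the same route as the paper: lift to the order completions via Corollary~\ref{AS} and Theorem~\ref{reg_prop1}, apply Lemma~\ref{ocompl-reg}, then descend. The only cosmetic difference is that the paper compresses the descent from $Y^\delta$ back to $Y$ into the phrase ``we may assume that $X$ and $Y$ are both order complete'' (implicitly invoking Corollary~\ref{AS} for $Y$ as well as for $X$), whereas you spell out this step explicitly via Theorem~\ref{odense-maj-o-lim}; since Corollary~\ref{AS} is just the special case of that theorem, the two arguments are the same.
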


\begin{proof}
If $x_\alpha\xrightarrow{\rm o}0$ in $Y$ then $x_\alpha\xrightarrow{\rm o}0$ in $X$ by Lemma~\ref{regular}. For the converse implication, suppose that $(x_\alpha)$ is an order bounded net in $Y$ and  $x_\alpha\xrightarrow{\rm o}0$ in~$X$. By Corollary~\ref{AS} and Theorem~\ref{reg_prop1}, we may assume that $X$ and $Y$ are both order complete. Now apply Lemma~\ref{ocompl-reg}.
\end{proof}

\begin{corollary}\label{reg_prop2}
Let $Y$ be a regular sublattice of~$X$. If $Y$ is dense in $X$ with respect to order convergence then $Y$ is order dense in~$X$. If, in addition, $Y$ is order complete in its own right, then $Y$ is an ideal of~$X$.
\end{corollary}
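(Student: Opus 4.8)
The plan is to prove the two assertions in order, deducing the second from the first.

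For order density, fix $0<x\in X$; I want to produce $0<y\in Y$ with $y\le x$. Since $Y$ is dense with respect to order convergence, choose a net $(y_\alpha)$ in $Y$ with $y_\alpha\xrightarrow{\rm o}x$ in $X$, and, replacing each $y_\alpha$ by $\abs{y_\alpha}$ (legitimate since $\abs{y_\alpha}\xrightarrow{\rm o}\abs{x}=x$), assume $y_\alpha\ge 0$. By Corollary~\ref{AS}, $y_\alpha\xrightarrow{\rm o}x$ in $X^\delta$ as well; fix a net $a_\gamma\downarrow 0$ together with thresholds $\alpha_\gamma$ witnessing this. For each index $\beta$ put $p_\beta=\inf\{y_\alpha\mid\alpha\ge\beta\}$, the infimum computed in the order complete lattice $X^\delta$. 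The crucial point is that $p_\beta$ in fact lies in $Y^\delta$: by Theorem~\ref{reg_prop1} the sublattice $Y^\delta$ is regular in $X^\delta$, and since $Y^\delta$ is order complete the set $\{y_\alpha\mid\alpha\ge\beta\}$ (bounded below by $0$) has an infimum in $Y^\delta$, which by regularity coincides with $p_\beta$.

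Two properties of $(p_\beta)$ then finish this part. First, $0\le p_\beta\le x$: positivity is clear, and for each $\gamma$ one may pick $\alpha\ge\beta$ with $\alpha\ge\alpha_\gamma$, so that $p_\beta\le y_\alpha\le x+a_\gamma$, whence $p_\beta\le\inf_\gamma(x+a_\gamma)=x$. Second, $x=\sup_\beta p_\beta$ in $X^\delta$: on a tail the net $(y_\alpha)$ is order bounded in $X^\delta$, so Remark~\ref{ocompl-oconv} applies and gives $x=\sup_\beta\inf_{\alpha\ge\beta}y_\alpha=\sup_\beta p_\beta$. Since $x>0$, some $p_\beta\ne 0$, and for that $\beta$ we have $0<p_\beta\le x$ with $p_\beta\in Y^\delta$. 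As $Y$ is order dense in $Y^\delta$, there is $0<y\in Y$ with $y\le p_\beta\le x$. This shows $Y$ is order dense in $X$.

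For the ideal property, assume in addition that $Y$ is order complete. Take $x\in X$ with $0\le x\le y_0$ for some $y_0\in Y_+$; it suffices to prove $x\in Y$, for then $Y$ is solid. By the first part $Y$ is order dense in $X$, so $x=\sup\bigl([0,x]\cap Y\bigr)$ in $X$. The set $[0,x]\cap Y$ is upward directed and bounded above in $Y$ by $y_0$; since $Y$ is order complete it has a supremum $y^\ast\in Y$, and since $Y$ is regular in $X$, Lemma~\ref{regular} gives that $y^\ast$ is also the supremum of $[0,x]\cap Y$ in $X$. Hence $x=y^\ast\in Y$, so $Y$ is an ideal of $X$.

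The one genuinely delicate point I expect is the membership $p_\beta\in Y^\delta$: the infimum of a tail of $(y_\alpha)$ need not belong to $Y$ itself, since $Y$ is not assumed order complete in the first part, and it is exactly Theorem~\ref{reg_prop1} together with order density of $Y$ in $Y^\delta$ that lets us pull an element of $Y$ back down below $x$. Everything else is routine manipulation of order limits; note in particular that the argument never uses density of $Y$ with respect to order convergence in $X^\delta$, only the convergence of the one given net to the given $x\in X$.
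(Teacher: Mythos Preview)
Your proof is correct, but it follows a different route from the paper's. The paper first establishes the stronger intermediate fact that $Y^\delta$ is an \emph{ideal} of $X^\delta$: given $0\le a\le b$ with $b\in Y^\delta$ and $a\in X^\delta$, it writes $a=\sup\bigl([0,a]\cap X\bigr)$, approximates each $x\in[0,a]\cap X$ by a net in $Y$, truncates to get an order bounded net in $Y^\delta$ converging to $x$ in $X^\delta$, and invokes Lemma~\ref{ocompl-reg} to conclude $x\in Y^\delta$ and hence $a\in Y^\delta$. Order density of $Y$ in $X$ then falls out by taking $z=\abs{y_{\alpha_0}}\wedge x$ for a suitable index and observing $0<z\in Y^\delta$, while the ``in addition'' clause is immediate from $Y=Y^\delta$ being an ideal of $X^\delta$.

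Your argument instead bypasses the ideal statement entirely: you use only the regularity of $Y^\delta$ in $X^\delta$ (Theorem~\ref{reg_prop1}) together with order completeness of $Y^\delta$ to see that the tail infima $p_\beta$ already lie in $Y^\delta$, and then pull back into $Y$ by order density of $Y$ in $Y^\delta$. For the second assertion you rederive the ideal property from order density plus order completeness of $Y$ --- exactly the ``standard fact'' the paper mentions after its proof as an alternative. So your approach is more self-contained and avoids Lemma~\ref{ocompl-reg}, while the paper's approach yields the extra information that $Y^\delta$ is an ideal of $X^\delta$, from which both conclusions drop out simultaneously.
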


\begin{proof}
By Theorem~\ref{reg_prop1}, $Y^\delta$ is a regular sublattice of~$X^\delta$. We first show that $Y^\delta$ is an ideal of~$X^\delta$. Let $b \in Y^\delta$ and $a \in X^\delta$ be such that $0 \leq a \leq b$. Denote
\begin{math}
  [0,a]=\bigl\{z \in X^\delta\mid  0\leq z \leq a\bigr\}.
\end{math}
By order denseness of $X$ in~$X^\delta$, we have
$a=\sup[0,a] \cap X$ in~$X^\delta$.
Let $x \in [0,a]\cap X$. Since $Y$ is dense in $X$ with respect to order convergence, there exists a net $(y_\alpha)$ in $Y$ such that $y_\alpha \xrightarrow{\rm {o}} x$ in $X$ and, therefore, in~$X^\delta$. Put $z_\alpha=\abs{y_\alpha}\wedge b$. Then $(z_\alpha)$ is an order bounded net in~$Y^\delta$, and $z_\alpha \xrightarrow{\rm o}x$ in~$X^\delta$. Lemma~\ref{ocompl-reg} yields $x \in Y^\delta$. Therefore, $a=\sup[0,a] \cap Y^\delta$ in~$X^\delta$. But again, since $[0,a]\cap Y^\delta$ is order bounded in~$Y^\delta$, its supremum in $X^\delta$ equals to its supremum in~$Y^\delta$. Hence, $a \in Y^\delta$. This proves that $Y^\delta$ is an ideal of~$X^\delta$.

Now pick any $x \in X_+$ with $x>0$. Since $Y$ is dense with respect to order convergence in~$X$, there exists a net $(y_\alpha)$ in $Y$ such that $y_\alpha \xrightarrow{\rm o} x$ in~$X$. Then $\abs{y_\alpha}\wedge x \xrightarrow{\rm o} x$ in~$X$. It follows that $z:=\abs{y_{\alpha_0}}\wedge x >0$ for some $\alpha_0$. Since $Y^\delta$ is an ideal, it follows that $z\in Y^\delta$. Using order denseness of $Y$ in~$Y^\delta$, we can find $y \in Y$ such that $0<y\leq z\leq x$. This proves that $Y$ is order dense in~$X$.

Finally, note that if $Y$ is order complete, then $Y=Y^\delta$ is an ideal of $X^\delta$ and hence of~$X$.
\end{proof}

The ``in addition'' part also follows from the standard fact that an order complete order dense sublattice is an ideal; \cite[Theorem~2.31]{Aliprantis:06}.

\section{Unbounded order convergence and regular sublattices}\label{uo-sec}

Following \cite{Nakano:48,DeMarr:64,Wickstead:77,Kaplan:97,GaoX:14}, a net $(x_\alpha)$ in a vector lattice $X$ is said to \term{converge in unbounded order} (uo-converge for short) to $x\in X$, written as $x_\alpha\xrightarrow{\rm uo}x$, if $\abs{x_\alpha-x}\wedge y\xrightarrow{\rm o}0$ for any $y\in X_+$;
$(x_\alpha)$ is said to be \term{uo-Cauchy} if the ``double'' net $(x_\alpha-x_\beta)_{(\alpha,\beta)}$ uo-converges to zero. It is easily seen that uo-convergence (respectively, uo-Cauchy) coincides with order convergence (respectively, o-Cauchy) for order bounded nets. But in general, they are very different; for example, the sequence $(e_n)$ of the standard unit vectors in $c_0$ uo-converges (to zero), but does not converge in order. We refer to \cite{GaoX:14,Gao:14} for some basic properties of uo-convergence and uo-Cauchy.

Throughout this paper, measures and vector measures are always assumed to be countably additive; no finiteness is assumed unless specified otherwise.
Given a measure space $(\Omega,\Sigma,\mu)$, we write $L_0(\mu)$ for the vector lattice of real-valued measurable functions on $\Omega$ modulo almost everywhere (a.e.) equality equipped with the a.e.\ order: $f\geq g$ iff $f(t)\geq g(t)$ for a.e.~$t\in \Omega$. It a standard fact that if a sequence of measurable functions converges a.e.\ then it converges a.e.\ to a measurable function. It follows easily that $L_0(\mu)$ is $\sigma$-order complete. 

\begin{proposition}\label{L0}
For a sequence $(x_n)$ in $L_0(\mu)$, the following are equivalent:
\begin{enumerate}
\item\label{L0-uo-conv} $(x_n)$ is uo-convergent;
\item\label{L0-uo-Cauchy} $(x_n)$ is uo-Cauchy;
\item\label{L0-ae} $(x_n)$ converges a.e.;
\item\label{L0-oconv} $(x_n)$ is order convergent;
\item\label{L0-o-Cauchy} $(x_n)$ is order Cauchy.
\end{enumerate}
In this case, $(x_n)$ is order bounded and the limits in \eqref{L0-uo-conv}, \eqref{L0-ae}, and~\eqref{L0-oconv} are the same.
\end{proposition}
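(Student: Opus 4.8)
The plan is to reduce everything to pointwise behaviour, exploiting two features of $L_0(\mu)$: its lattice operations, and its countable suprema and infima (when they exist), are computed pointwise a.e.; and, as noted above, $L_0(\mu)$ is $\sigma$-order complete. The heart of the argument is the following claim: for a sequence $(x_n)$ in $L_0(\mu)$ and $x\in L_0(\mu)$, one has $x_n\xrightarrow{\rm o}x$ in $L_0(\mu)$ iff $x_n\to x$ a.e., and in this case $(x_n)$ is order bounded. For the ``if'' direction, if $x_n\to x$ a.e.\ then for a.e.\ $t$ the sequence $\bigl(x_n(t)\bigr)$ converges, hence is bounded, so $g:=\sup_n\abs{x_n-x}$, computed pointwise, is a.e.\ finite; thus $g\in L_0(\mu)$, which shows that $(x_n)$ is order bounded and that $g_n:=\sup_{m\ge n}\abs{x_m-x}$ exists in $L_0(\mu)$ and is computed pointwise for every $n$. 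Since $x_m(t)\to x(t)$ a.e., $g_n(t)\downarrow 0$ a.e., i.e.\ $g_n\downarrow 0$ in $L_0(\mu)$; as $\abs{x_n-x}\le g_n$, this gives $x_n\xrightarrow{\rm o}x$. For the ``only if'' direction, passing to a tail we may assume $(x_n)$ is order bounded, so again $g_n$ exists in $L_0(\mu)$ and is pointwise; by Remark~\ref{ocompl-oconv}, $g_n\downarrow\inf_n\sup_{m\ge n}\abs{x_m-x}=0$, and evaluating pointwise gives $\limsup_m\abs{x_m(t)-x(t)}=0$ a.e., i.e.\ $x_n\to x$ a.e.

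With this claim in hand, \eqref{L0-oconv}$\Leftrightarrow$\eqref{L0-ae} is immediate (with the same limit and with order boundedness), and \eqref{L0-oconv}$\Leftrightarrow$\eqref{L0-o-Cauchy} follows from Proposition~\ref{oconv-oCauchy} (for the nontrivial direction) together with the standard fact that order convergence implies order Cauchyness. Moreover \eqref{L0-oconv}$\Rightarrow$\eqref{L0-uo-conv} holds in any vector lattice since $\abs{x_n-x}\wedge y\le\abs{x_n-x}\xrightarrow{\rm o}0$, and \eqref{L0-uo-conv}$\Rightarrow$\eqref{L0-uo-Cauchy} is also a general fact. So the proof is complete once we establish \eqref{L0-uo-Cauchy}$\Rightarrow$\eqref{L0-ae}.

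For that step, suppose $(x_n)$ is uo-Cauchy and apply the definition with $y=\one$: the double net $\bigl(\abs{x_p-x_q}\wedge\one\bigr)_{(p,q)}$ converges in order to $0$, so there is a net $a_\gamma\downarrow 0$ in $L_0(\mu)$ such that for each $\gamma$ there is $N_\gamma$ with $\abs{x_p-x_q}\wedge\one\le a_\gamma$ whenever $p,q\ge N_\gamma$. Set $b_n:=\sup_{p,q\ge n}\bigl(\abs{x_p-x_q}\wedge\one\bigr)$; this countable supremum exists in $L_0(\mu)$, since the set is bounded above by $\one$, and is computed pointwise a.e. The sequence $(b_n)$ decreases, and $b_n\le a_\gamma$ whenever $n\ge N_\gamma$, so $b_n\downarrow 0$ in $L_0(\mu)$ and hence $b_n(t)\downarrow 0$ for a.e.\ $t$. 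Thus for a.e.\ $t$ we have $\sup_{p,q\ge n}\min\bigl(\abs{x_p(t)-x_q(t)},1\bigr)\to 0$, which forces $\bigl(x_n(t)\bigr)$ to be Cauchy in $\mathbb{R}$ (once this supremum is below $1$ it equals $\sup_{p,q\ge n}\abs{x_p(t)-x_q(t)}$). Hence $(x_n)$ converges a.e., which is~\eqref{L0-ae}.

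Finally, order boundedness in the last assertion comes from the claim above, and the limits in \eqref{L0-uo-conv}, \eqref{L0-ae}, \eqref{L0-oconv} coincide because each of these conditions forces a.e.\ convergence to the named vector — for \eqref{L0-oconv} by the claim, and for \eqref{L0-uo-conv} by applying the argument of the previous paragraph to $\abs{x_n-x}\wedge\one$ in place of $\abs{x_p-x_q}\wedge\one$. I expect the one genuinely delicate point to be the implication \eqref{L0-uo-Cauchy}$\Rightarrow$\eqref{L0-ae}: uo-Cauchyness is a statement about a double net and supplies no candidate limit, so the crux is to use the weak unit $\one$ together with the countably-indexed suprema $b_n$ — legitimate thanks to $\sigma$-order completeness — to extract pointwise Cauchyness, everything else being either the key claim or a general vector-lattice fact.
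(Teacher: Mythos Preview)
Your proof is correct and follows essentially the same strategy as the paper: reduce everything to pointwise computations via the $\sigma$-order completeness of $L_0(\mu)$, with the only nontrivial step being \eqref{L0-uo-Cauchy}$\Rightarrow$\eqref{L0-ae}. The one noteworthy difference is in that step: the paper argues by contradiction, picking a set $A$ of positive measure where pointwise Cauchyness fails and testing against $\chi_A$, whereas you argue directly by testing against the single vector $\one$ and reading off pointwise Cauchyness from $b_n(t)\downarrow 0$. Your route is arguably cleaner---it avoids the contradiction setup and exploits that $\one$ is a weak unit---while the paper's choice of $\chi_A$ makes the role of the ``bad set'' more explicit; both rest on the same idea of using a bounded test vector to cap the differences and then invoking $\sigma$-order completeness for the countable supremum.
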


\begin{proof}
  The implications \eqref{L0-uo-conv}$\Rightarrow$\eqref{L0-uo-Cauchy}, \eqref{L0-oconv}$\Rightarrow$\eqref{L0-o-Cauchy}, and \eqref{L0-oconv}$\Rightarrow$\eqref{L0-uo-conv} are trivial; Proposition~\ref{oconv-oCauchy} yields \eqref{L0-o-Cauchy}$\Rightarrow$\eqref{L0-oconv}.

\eqref{L0-uo-Cauchy}$\Rightarrow$\eqref{L0-ae}
Suppose that $(x_n)$ is uo-Cauchy in $L_0(\mu)$ but $(x_n)$ is not a.e.\ convergent. It follows that there exists an $\varepsilon>0$ such that the set
\begin{displaymath}
  A=\Bigl\{t\in\Omega\mid\inf_{n,m\geq 1} \sup_{k \geq n,l\geq m}\bigabs{x_k(t)-x_l(t)} >\varepsilon\Bigr\}
\end{displaymath}
has positive measure. Since $\abs{x_n-x_m}\wedge \chi_{A}\xrightarrow{\rm o}0$, we have
$$v_{n,m}:=\sup_{k \geq n,l\geq m}\abs{x_k-x_l}\wedge\chi_{A}\downarrow0 \text{ in } L_0(\mu).$$
But for any $n,m\geq 1$, $$v_{n,m}(t)= \sup_{k \geq n,l\geq m}\abs{x_k(t)-x_l(t)}\wedge\chi_{A}(t)\geq \varepsilon \text{ for a.e. }t\in A,$$
implying that $v_{n,m}\geq \varepsilon\chi_A>0$, a contradiction.

\eqref{L0-ae}$\Rightarrow$\eqref{L0-oconv}
Suppose $x_n\xrightarrow{\rm a.e.}x$. Without loss of generality, we may assume that $x\in L_0(\mu)$ and, replacing $x_n$ with $x_n-x$ and modifying each $x_n$ on a set of measure zero, we may assume that $x_n(t)\to 0$ for every $t\in\Omega$. 
Then $\sup_n\abs{x_n(t)}<\infty$ for every $t\in\Omega$. It follows easily that this pointwise supremum is also the supremum of $(\abs{x_n})$ in $L_0(\mu)$. Therefore, $(x_n)$ is order bounded. For every $t$ and~$n$, put $z_n(t)=\sup_{k\ge n}\bigabs{x_k(t)}$. It is easy to see that $z_n=\sup_{k\ge n}\abs{x_k}$ in $L_0(\mu)$ and that $z_k(t)\downarrow 0$ for every~$t$. It follows that $\abs{x_n}\le z_n\downarrow 0$ in $L_0(\mu)$; hence  $x_n\xrightarrow{\rm o}0$.
\end{proof}

Understanding the relations of uo-convergence in the entire vector lattice and in a sublattice is of critical importance to applications of uo-convergence; see \cite{GaoX:14,Gao:14}. In general, uo-convergence may not be stable under passing to and from sublattices. The following theorem identifies the sublattices for which uo-convergence does pass to and from them; this theorem is key to numerous applications of uo-convergence. Cf.\ Theorem~\ref{odense-maj-o-lim} and Corollary~\ref{reg-obdd-twoway}.

\begin{theorem}\label{uo_regular}
Let $Y$ be a sublattice of a vector lattice~$X$. The following are equivalent:
\begin{enumerate}
\item\label{uo_prop2i1} $Y$ is regular;
\item\label{uo_prop2i2} For any net $(y_\alpha)$ in~$Y$, $y_\alpha \xrightarrow{\rm uo} 0$ in $Y$ implies $y_\alpha \xrightarrow{\rm {uo}} 0$ in~$X$;
\item\label{uo_prop2i3} For any net $(y_\alpha)$ in~$Y$, $y_\alpha \xrightarrow{\rm uo} 0$ in $Y$ if and only if $y_\alpha \xrightarrow{\rm {uo}} 0$ in~$X$.
\end{enumerate}
\end{theorem}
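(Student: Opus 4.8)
The plan is to prove the cycle $\eqref{uo_prop2i1}\Rightarrow\eqref{uo_prop2i3}\Rightarrow\eqref{uo_prop2i2}\Rightarrow\eqref{uo_prop2i1}$; the implication $\eqref{uo_prop2i3}\Rightarrow\eqref{uo_prop2i2}$ is immediate. For $\eqref{uo_prop2i2}\Rightarrow\eqref{uo_prop2i1}$ I would use item~(4) of Lemma~\ref{regular}, so it suffices to check that $y_\alpha\downarrow 0$ in $Y$ forces $y_\alpha\downarrow 0$ in~$X$. If $y_\alpha\downarrow 0$ in $Y$, then $y_\alpha\xrightarrow{\rm o}0$ in $Y$ (dominate by the net itself), hence $y_\alpha\xrightarrow{\rm uo}0$ in~$Y$; by $\eqref{uo_prop2i2}$, $y_\alpha\xrightarrow{\rm uo}0$ in~$X$. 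Since the tail $\{y_\alpha:\alpha\ge\alpha_0\}$ is order bounded in~$X$ and uo-convergence coincides with order convergence for order bounded nets, $y_\alpha\xrightarrow{\rm o}0$ in~$X$; and a decreasing net of positive vectors that order-converges to $0$ has infimum~$0$. Thus $y_\alpha\downarrow 0$ in~$X$, so $Y$ is regular.

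The substance is $\eqref{uo_prop2i1}\Rightarrow\eqref{uo_prop2i3}$; fix a net $(y_\alpha)$ in~$Y$. The ``only if'' direction is routine: each net $\bigl(\abs{y_\alpha}\wedge u\bigr)$ with $u\in Y_+$ is order bounded in~$Y$, so by Corollary~\ref{reg-obdd-twoway} it is order null in $X$ precisely when it is order null in~$Y$; hence $y_\alpha\xrightarrow{\rm uo}0$ in $X$ implies $y_\alpha\xrightarrow{\rm uo}0$ in~$Y$. For the converse, assume $y_\alpha\xrightarrow{\rm uo}0$ in~$Y$; the same observation together with Corollary~\ref{AS} gives $\abs{y_\alpha}\wedge u\xrightarrow{\rm o}0$ in $X^\delta$ for every $u\in Y_+$. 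The goal is to deduce $\abs{y_\alpha}\wedge w\xrightarrow{\rm o}0$ in $X^\delta$ — hence, by Corollary~\ref{AS}, in~$X$ — for every $w\in X_+$. Since $X^\delta$ is order complete, put $s_\alpha=\sup_{\beta\ge\alpha}\bigl(\abs{y_\beta}\wedge w\bigr)\le w$ and $s=\inf_\alpha s_\alpha\ge 0$; by Remark~\ref{ocompl-oconv}, it suffices to show $s=0$.

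The crux is that $s$ is at once disjoint from $Y$ and a member of the band $B$ generated by $Y$ in~$X^\delta$. For disjointness, fix $\gamma$; by the infinite distributive law,
\begin{displaymath}
  s\wedge\abs{y_\gamma}\le s_\alpha\wedge\abs{y_\gamma}=\sup_{\beta\ge\alpha}\bigl(\abs{y_\beta}\wedge w\wedge\abs{y_\gamma}\bigr)\le\sup_{\beta\ge\alpha}\bigl(\abs{y_\beta}\wedge\abs{y_\gamma}\bigr),
\end{displaymath}
and the last term is an order bounded net in $X^\delta$ decreasing to $0$, since $\abs{y_\beta}\wedge\abs{y_\gamma}\xrightarrow{\rm o}0$ in $X^\delta$ (take $u=\abs{y_\gamma}$); hence $s\wedge\abs{y_\gamma}=0$, so $s\perp Y$. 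For band membership, note that each $\abs{y_\beta}\wedge w\le\abs{y_\beta}$ lies in $B$, and $s_\alpha$ is the supremum of the upward directed family of finite suprema of $\{\abs{y_\beta}\wedge w:\beta\ge\alpha\}$, all of which belong to~$B$; since $B$ is a band, $s_\alpha\in B$, and then $s\in B$ because $0\le s\le s_\alpha$. As $s\perp Y$ gives $s\in Y^d=B^d$, we conclude $s\in B\cap B^d=\{0\}$, so $s=0$.

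The step I expect to be the main obstacle is precisely this last one: order convergence in $X$ need not descend to a regular sublattice for order unbounded nets (for instance $e_n\xrightarrow{\rm o}0$ in $\ell_\infty$ but not in~$c_0$), so Corollary~\ref{reg-obdd-twoway} cannot be applied to the possibly order unbounded net $\bigl(\abs{y_\alpha}\bigr)$ itself. Isolating the ``residual'' $s=\inf_\alpha\sup_{\beta\ge\alpha}\bigl(\abs{y_\beta}\wedge w\bigr)$ and trapping it inside $B\cap B^d$ is the device that carries the unbounded statement across; the remaining ingredients are straightforward bookkeeping with $X^\delta$ via Corollaries~\ref{AS} and~\ref{reg-obdd-twoway} and the $\inf$--$\sup$ description of order convergence in Remark~\ref{ocompl-oconv}.
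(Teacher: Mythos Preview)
Your argument is correct in substance and takes a genuinely different route from the paper for the hard direction $y_\alpha\xrightarrow{\rm uo}0$ in $Y$ $\Rightarrow$ $y_\alpha\xrightarrow{\rm uo}0$ in~$X$. The paper factors through the ideal $I$ generated by $Y$ in~$X^\delta$: it first shows $y_\alpha\xrightarrow{\rm uo}0$ in $I$ (easy, since every $u\in I_+$ is dominated by some $y\in Y_+$), and then invokes \cite[Lemma~3.4]{GaoX:14} to pass from the ideal $I$ to all of~$X^\delta$. Your $\limsup$/band argument replaces that external citation by a direct computation: forming $s=\inf_\alpha\sup_{\beta\ge\alpha}\bigl(\abs{y_\beta}\wedge w\bigr)$ and trapping it in $B\cap B^{\mathrm d}=\{0\}$ is self-contained and in effect re-proves the ideal-to-ambient-space passage in situ. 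The paper's route is shorter once one accepts the cited lemma as a black box; yours stays entirely within the paper's own toolkit.

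One slip needs fixing. In the disjointness step you fix an index $\gamma$, show $s\wedge\abs{y_\gamma}=0$, and conclude $s\perp Y$. That inference is not valid: the net $(y_\alpha)$ need not visit all of $Y$ (think of a constant zero net). The repair is immediate and already implicit in your setup: replace $\abs{y_\gamma}$ by an arbitrary $u\in Y_+$. You have already established $\abs{y_\alpha}\wedge u\xrightarrow{\rm o}0$ in $X^\delta$ for every $u\in Y_+$, so the identical distributive-law estimate gives
\begin{displaymath}
  s\wedge u\le s_\alpha\wedge u=\sup_{\beta\ge\alpha}\bigl(\abs{y_\beta}\wedge w\wedge u\bigr)\le\sup_{\beta\ge\alpha}\bigl(\abs{y_\beta}\wedge u\bigr)\downarrow 0,
\end{displaymath}
hence $s\wedge u=0$ for all $u\in Y_+$, and therefore $s\in Y^{\mathrm d}=B^{\mathrm d}$ as required.
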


\begin{proof}
The implication \eqref{uo_prop2i3}$\Rightarrow$\eqref{uo_prop2i2} is obvious. To prove that \eqref{uo_prop2i2}$\Rightarrow$\eqref{uo_prop2i1},
suppose that $y_\alpha\downarrow 0$ in~$Y$. Applying the fact that uo-convergence agrees with order convergence for order bounded nets to a tail of $(y_\alpha)$, one can easily obtain $y_\alpha\downarrow 0$ in $X$ by \eqref{uo_prop2i2}. It follows that $Y$ is regular.

To prove that \eqref{uo_prop2i1}$\Rightarrow$\eqref{uo_prop2i3}, suppose that $Y$ is regular in~$X$. Let $(y_\alpha)$ be a net in $Y$ such that $y_\alpha \xrightarrow{\rm uo} 0$ in~$Y$. Since $X$ is regular in~$X^\delta$, it follows that $Y$ is also regular in~$X^\delta$. Let $I$ be the ideal generated by $Y$ in~$X^\delta$. We claim that $y_\alpha \xrightarrow{\rm uo} 0$ in~$I$.
Indeed, fix $u\in I_+$. There exists $y\in Y_+$ such that $0\le u\le y$. By assumption, $\abs{y_\alpha}\wedge y\xrightarrow{\rm o}0$ in $Y$ and, therefore, in~$X^\delta$, because $Y$ is regular in~$X^\delta$.
Furthermore, since $I$ is regular in~$X^\delta$, we have $\abs{y_\alpha}\wedge y\xrightarrow{\rm o}0$ in $I$ by Corollary~\ref{reg-obdd-twoway}. It follows from $0\le u\le y$ that $\abs{y_\alpha}\wedge u\xrightarrow{\rm o}0$ in~$I$. Therefore,  $y_\alpha \xrightarrow{\rm uo} 0$ in~$I$. It now follows from \cite[Lemma~3.4]{GaoX:14} that $y_\alpha \xrightarrow{\rm uo}0$ in~$X^\delta$.
Finally, for any $x\in X_+$, $\abs{y_\alpha}\wedge x\xrightarrow{\rm o}0$ in~$X^\delta$, and, therefore, in $X$ by Corollary~\ref{AS}, so that $y_\alpha \xrightarrow{\rm uo} 0$ in~$X$.

Conversely, let $(y_\alpha)$ be a net in $Y$ such that $y_\alpha \xrightarrow{\rm uo} 0$ in~$X$. Fix $u\in Y_+$. Then $\abs{y_\alpha}\wedge u\xrightarrow{\rm o}0$ in~$X$. By Corollary~\ref{reg-obdd-twoway},  $\abs{y_\alpha}\wedge u\xrightarrow{\rm o}0$ in~$Y$, so that $y_\alpha \xrightarrow{\rm uo} 0$ in~$Y$.
\end{proof}

This theorem allows us to drop the order completeness assumptions in
several known results. Namely, the following three corollaries improve
 \cite[Lemmas~3.4 and 4.5]{GaoX:14}, \cite[Theorem~2.2]{Kaplan:97} and \cite[Lemma~1.1]{Gao:14}.

\begin{corollary}\label{subl}
Suppose that $Y$ is either an ideal of a vector lattice~$X$, or an order continuous norm complete sublattice of a normed lattice~$X$. Then for a net $(y_\alpha)$ in~$Y$, $y_\alpha \xrightarrow{\rm uo} 0$ in $Y$ if and only if $y_\alpha \xrightarrow{\rm uo} 0$ in~$X$.
\end{corollary}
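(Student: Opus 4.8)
The plan is to reduce everything to Theorem~\ref{uo_regular}. That theorem already tells us that, for nets drawn from a sublattice $Y$, uo-convergence in $Y$ and uo-convergence in $X$ coincide precisely when $Y$ is regular in~$X$. So the whole corollary comes down to checking, in each of the two listed situations, that $Y$ is a regular sublattice of~$X$; the claimed equivalence then follows from \eqref{uo_prop2i1}$\Leftrightarrow$\eqref{uo_prop2i3} of Theorem~\ref{uo_regular}.

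For the first situation there is essentially nothing to prove: as was observed right after Lemma~\ref{regular}, every ideal of a vector lattice is regular. For the second situation I would verify condition~(4) of Lemma~\ref{regular}, i.e.\ that $y_\alpha\downarrow 0$ in~$Y$ forces $y_\alpha\downarrow 0$ in~$X$. Since $Y$ is a sublattice of the normed lattice~$X$, the lattice norm it carries is just the restriction of $\norm{\cdot}$ from~$X$. Now suppose $y_\alpha\downarrow 0$ in~$Y$; order continuity of the norm of~$Y$ gives $\norm{y_\alpha}\to 0$. If $x\in X$ satisfies $0\le x\le y_\alpha$ for every~$\alpha$, then $\norm{x}\le\norm{y_\alpha}\to 0$ because the norm of~$X$ is a lattice norm, so $x=0$. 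Hence $\inf_\alpha y_\alpha=0$ in~$X$, i.e.\ $y_\alpha\downarrow 0$ in~$X$, and $Y$ is regular. Combining the two cases with Theorem~\ref{uo_regular} finishes the proof.

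I do not anticipate a genuine obstacle here: all the substance is carried by Theorem~\ref{uo_regular}, and what remains are two routine facts (ideals are regular, and order continuity of the norm of~$Y$ transports the norm-null conclusion to~$X$ through the lattice-norm inequality). The only point worth a second look is that one should use the restricted norm on~$Y$, not some a priori different norm; note also that norm completeness of~$Y$ is not actually needed for this argument, and is retained only to align the hypotheses with the results of \cite{Kaplan:97,Gao:14} that are being improved.
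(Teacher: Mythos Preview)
Your proposal is correct and follows exactly the paper's approach: the paper's proof is the single line ``Simply observe that $Y$ is regular in $X$ in either case,'' after which Theorem~\ref{uo_regular} applies. You have supplied the details the paper leaves implicit (in particular, the verification of regularity in the order continuous case via condition~(4) of Lemma~\ref{regular}), and your remark that norm completeness is not actually used in the regularity argument is accurate.
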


\begin{proof}
Simply observe that $Y$ is regular in $X$ in either case.
\end{proof}

\begin{remark}\label{uo-kothe}
Let $(\Omega,\Sigma,\mu)$ be a measure space and $X$ be an ideal, or more generally, a regular sublattice, of $L_0(\mu)$. Then for a sequence $(x_n)$ in~$X$, we have $x_n\xrightarrow{\rm uo}0$ in $X$ iff $x_n\xrightarrow{\rm uo}0$ in $L_0(\mu)$, iff $x_n\xrightarrow{\rm a.e.}0$ by Proposition~\ref{L0}.
Similarly, $(x_n)$ is uo-Cauchy in $X$ iff $(x_n)$ is uo-Cauchy in $L_0(\mu)$, iff $(x_n)$ converges almost everywhere. In the latter case, $(x_n)$ is uo-convergent in $X$ iff its a.e.~limit in $L_0(\mu)$ belongs to~$X$.

In particular, this statement holds for $L_p(\mu)$ spaces, where $0<p\leq \infty$, and for K\"othe function spaces (cf.~\cite[Definition~1.b.7]{Lindenstrauss:79}). This shows that the uo-convergence may be viewed as a generalization of a.e.\ convergence.
\end{remark}

\begin{corollary}\label{uo-weaku}
Let $X$ be a vector lattice with a weak unit $x_0>0$. Then for a net $(x_\alpha)$ in~$X$, $x_\alpha\xrightarrow{\rm uo}0$ in $X$ if and only if $\abs{x_\alpha}\wedge x_0\xrightarrow{\rm o}0$ in~$X$.
\end{corollary}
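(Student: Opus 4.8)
The forward implication is immediate: if $x_\alpha \xrightarrow{\rm uo} 0$ in $X$, then by definition $\abs{x_\alpha}\wedge u \xrightarrow{\rm o} 0$ for every $u \in X_+$, and in particular for $u = x_0$. So the content is in the converse: assuming $\abs{x_\alpha}\wedge x_0 \xrightarrow{\rm o} 0$, I must show $\abs{x_\alpha}\wedge u \xrightarrow{\rm o} 0$ for every $u \in X_+$.

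The plan is to reduce to the order-complete (or at least $\sigma$-order complete) setting, where the principal band or ideal generated by a weak unit is all of $X$, and then exploit that $x_0$ being a weak unit means $u \wedge n x_0 \uparrow u$. Concretely: pass to $X^\delta$. Since $X$ is order dense in $X^\delta$ and $x_0$ is a weak unit in $X$, it is easy to check $x_0$ remains a weak unit in $X^\delta$ (if $z \in X^\delta_+$ with $z \wedge x_0 = 0$, use order density to find $x \in X_+$, $0 < x \le z$, whence $x \wedge x_0 = 0$, contradicting that $x_0$ is a weak unit in $X$). By Corollary~\ref{AS}, $\abs{x_\alpha}\wedge x_0 \xrightarrow{\rm o} 0$ in $X$ iff in $X^\delta$, and likewise $x_\alpha \xrightarrow{\rm uo} 0$ in $X$ iff in $X^\delta$ (this last equivalence is the case $Y = X$ of Theorem~\ref{uo_regular}, since $X$ is regular in $X^\delta$). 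Hence it suffices to prove the statement in $X^\delta$, i.e.\ I may assume $X$ is order complete.

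Now assume $X$ is order complete and $\abs{x_\alpha}\wedge x_0 \xrightarrow{\rm o} 0$. Fix $u \in X_+$; I want $\abs{x_\alpha}\wedge u \xrightarrow{\rm o} 0$. The key estimate is the distributive-lattice inequality
\begin{displaymath}
  \abs{x_\alpha}\wedge u
  \;\le\; \bigl(\abs{x_\alpha}\wedge n x_0\bigr)
          \;+\; \bigl(u - n x_0 \wedge u\bigr)_+
  \;=\; n\bigl(\tfrac1n\abs{x_\alpha}\wedge x_0\bigr) + \bigl(u - n x_0\bigr)^+,
\end{displaymath}
valid for every $n$; the first term on the right is dominated by $n\bigl(\abs{x_\alpha}\wedge x_0\bigr)$, which order-converges to $0$ for each fixed $n$, while $(u - n x_0)^+ \downarrow 0$ as $n\to\infty$ because $x_0$ is a weak unit (so $n x_0 \wedge u \uparrow u$ in the order-complete lattice $X$). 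Splitting the limit this way — an $\varepsilon/2$-type argument at the level of order convergence — gives $\abs{x_\alpha}\wedge u \xrightarrow{\rm o} 0$. To run this cleanly I would pick dominating nets: let $a_\gamma \downarrow 0$ dominate $\abs{x_\alpha}\wedge x_0$ in the sense of Definition~\ref{def:oconv}; then for each $n$, the net $\bigl(n a_\gamma + (u - n x_0)^+\bigr)$ indexed appropriately, together with a diagonal over $n$, serves as a dominating net for $\abs{x_\alpha}\wedge u$.

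The main obstacle is organizing the double limit — over the net index $\alpha$ and over $n$ — into a single dominating net that witnesses order convergence in the sense of Definition~\ref{def:oconv}, without the convenience of metric $\varepsilon$'s. This is routine in an order-complete lattice via Remark~\ref{ocompl-oconv} (reduce to checking $\inf_\alpha \sup_{\beta \ge \alpha} \abs{x_\beta}\wedge u = 0$): for each $n$, $\inf_\alpha \sup_{\beta\ge\alpha}\bigl(\abs{x_\beta}\wedge u\bigr) \le \inf_\alpha \sup_{\beta\ge\alpha} n(\abs{x_\beta}\wedge x_0) + (u-nx_0)^+ = (u - n x_0)^+$, and then letting $n\to\infty$ sends the right side to $0$. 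I should double-check the displayed lattice inequality holds in an arbitrary Archimedean vector lattice — it does, since for any $a,b,c \ge 0$ one has $a \wedge c \le (a\wedge b) + (c - b)^+$ — so no order-completeness is even needed for that step, only for taking the relevant suprema and infima.
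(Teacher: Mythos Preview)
Your argument is correct. The reduction step---passing to $X^\delta$, noting that $x_0$ remains a weak unit there, and invoking Theorem~\ref{uo_regular} and Corollary~\ref{AS} to transfer both uo- and o-convergence back and forth---is exactly what the paper does. The difference is in how the order-complete case is handled: the paper simply cites \cite[Theorem~2.2]{Kaplan:97} as a black box, whereas you prove it directly via the elementary lattice inequality $a\wedge c\le (a\wedge b)+(c-b)^+$ (applied with $a=\abs{x_\alpha}$, $b=nx_0$, $c=u$) together with $(u-nx_0)^+\downarrow 0$ and Remark~\ref{ocompl-oconv}. Your route is more self-contained and makes explicit why the weak-unit hypothesis enters; the paper's route is shorter but relies on the external reference. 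Both are sound.
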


\begin{proof}
Observe that $x_0$ is also a weak unit of~$X^\delta$. By Theorem~\ref{uo_regular}, $x_\alpha\xrightarrow{\rm uo}0$ in $X$ if and only if $x_\alpha\xrightarrow{\rm uo}0$ in~$X^\delta$, and thus by \cite[Theorem~2.2]{Kaplan:97}, if and only if $\abs{x_\alpha}\wedge x_0\xrightarrow{\rm o}0$ in~$X^\delta$, which is equivalent to
$\abs{x_\alpha}\wedge x_0\xrightarrow{\rm o}0$ in~$X$.
\end{proof}

\begin{corollary}\label{dis}Let $(x_n)$ be a disjoint sequence in~$X$. Then $x_n\xrightarrow{\rm uo}0$ in~$X$.
\end{corollary}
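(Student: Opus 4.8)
The plan is to reduce the statement to the case where $X$ is order complete, and then run the classical argument for disjoint order bounded sequences. First, I would replace each $x_n$ by $\abs{x_n}$; the sequence $(\abs{x_n})$ is still disjoint, and $x_n\xrightarrow{\rm uo}0$ iff $\abs{x_n}\xrightarrow{\rm uo}0$, so we may assume $x_n\ge 0$ for all $n$. To verify $x_n\xrightarrow{\rm uo}0$ in $X$ we fix $u\in X_+$ and must show $x_n\wedge u\xrightarrow{\rm o}0$. Set $y_n=x_n\wedge u$; these meets are computed in $X$ and hence also in $X^\delta$, since $X$ is a sublattice of $X^\delta$. The sequence $(y_n)$ is pairwise disjoint, because $y_n\wedge y_m\le x_n\wedge x_m=0$ for $n\ne m$, and it is order bounded by $u$. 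By Corollary~\ref{AS}, $y_n\xrightarrow{\rm o}0$ in $X$ iff $y_n\xrightarrow{\rm o}0$ in $X^\delta$; since $u$ is arbitrary, proving the latter for every $u$ yields $x_n\xrightarrow{\rm uo}0$ in $X$.

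So it remains to prove the following fact: a disjoint, order bounded sequence $(y_n)$ in an order complete vector lattice $E$ (here $E=X^\delta$) satisfies $y_n\xrightarrow{\rm o}0$. I would let $z_n=\sup_{k\ge n}y_k$, which exists by order completeness and order boundedness, and note $z_n\downarrow z$ for some $z\ge 0$. Using the infinite distributive law $(\sup_k a_k)\wedge b=\sup_k(a_k\wedge b)$, valid whenever the left-hand supremum exists, together with disjointness: for $n>m$ we have $z_n\wedge y_m=\sup_{k\ge n}(y_k\wedge y_m)=0$, hence $0\le z\wedge y_m\le z_n\wedge y_m=0$, so $z\wedge y_m=0$ for every $m$. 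Then $z\wedge z_n=\sup_{k\ge n}(z\wedge y_k)=0$, and since $z\le z_n$ this forces $z=0$. Thus $0\le y_n\le z_n\downarrow 0$, i.e.\ $y_n\xrightarrow{\rm o}0$.

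The argument is essentially routine; the only point requiring care is the existence of the suprema $z_n$, which is precisely why one intersects with $u$ before passing to $X^\delta$ — without order boundedness these suprema need not exist (e.g.\ $(e_n)$ in $c_0$). I would also remark that the order complete case of this statement is already contained in \cite{GaoX:14}, so that the genuinely new content of the corollary is the reduction via regularity, which here is immediate from Corollary~\ref{AS} (alternatively from Theorem~\ref{uo_regular} applied to the regular sublattice $X\subseteq X^\delta$).
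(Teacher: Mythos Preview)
Your argument is correct. The overall strategy---pass to the order completion $X^\delta$, use that disjoint sequences are uo-null there, and pull back---matches the paper's, but the execution differs in two respects. First, the paper cites \cite[Lemma~1.1]{Gao:14} for the order complete case, whereas you supply a short self-contained proof via the tails $z_n=\sup_{k\ge n}y_k$ and the infinite distributive law; this makes your version independent of the external reference. Second, the paper transfers uo-convergence directly via Theorem~\ref{uo_regular}, while you fix $u\in X_+$ first and transfer the order convergence of $(x_n\wedge u)$ via Corollary~\ref{AS}; your route thus relies only on the simpler Corollary~\ref{AS} and avoids the heavier Theorem~\ref{uo_regular}. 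The net effect is the same, but your proof is slightly more elementary and entirely self-contained, at the cost of being a bit longer. One minor point: the order complete case you mention at the end is \cite[Lemma~1.1]{Gao:14} rather than \cite{GaoX:14}.
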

\begin{proof}Since $(x_n)$ is disjoint in~$X^\delta$, it follows from \cite[Lemma~1.1]{Gao:14} that $x_n\xrightarrow{\rm uo} 0$ in~$X^\delta$, and therefore, in $X$ by Theorem~\ref{uo_regular}.
\end{proof}

Recall that a Banach lattice $X$ has the \term{Positive Schur Property (PSP)} if $0\leq x_n\xrightarrow{\rm w}0$ implies $x_n\to 0$ (in norm).
The following theorem was proved in \cite[Theorem~3.12]{GaoX:14} for $\sigma$-order complete spaces. We use Corollary~\ref{dis} to drop the  $\sigma$-order completeness condition.

\begin{theorem}\label{PSP_char}
  A Banach lattice has the PSP if and only if
  $x_n\xrightarrow{\rm uo,w}0$ implies $x_n\to 0$ in norm for every sequence $(x_n)$.
\end{theorem}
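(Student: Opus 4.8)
The plan is to prove the two implications separately. The ``if'' direction is an immediate consequence of Corollary~\ref{dis}, and the ``only if'' direction combines two standard descriptions of the Positive Schur Property with the defining property of uo-convergence.

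\emph{``If''.} Suppose every sequence with $x_n\xrightarrow{\rm uo,w}0$ is norm null. Recall the well-known characterization that a Banach lattice has the PSP if and only if every disjoint weakly null sequence in it converges to zero in norm. So let $(x_n)$ be disjoint and weakly null. By Corollary~\ref{dis} we have $x_n\xrightarrow{\rm uo}0$, hence $x_n\xrightarrow{\rm uo,w}0$, and the hypothesis gives $\norm{x_n}\to0$. Thus every disjoint weakly null sequence is norm null, so $X$ has the PSP. This is precisely the point at which Corollary~\ref{dis} takes over the role played by $\sigma$-order completeness in \cite[Theorem~3.12]{GaoX:14}.

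\emph{``Only if''.} Suppose $X$ has the PSP and $x_n\xrightarrow{\rm uo,w}0$; I would deduce $\norm{x_n}\to0$ from the following two classical facts. First, a Banach lattice with the PSP has order continuous norm: a Banach lattice without order continuous norm contains a lattice copy of $c_0$, and the images of the standard unit vectors then form a positive, weakly null sequence bounded away from zero, contradicting the PSP. Second, in a Banach lattice with the PSP every relatively weakly compact set $A$ is almost order bounded, i.e.\ for each $\varepsilon>0$ there is $u\in X_+$ with $\norm{(\abs{x}-u)^+}\le\varepsilon$ for all $x\in A$. Now, since $(x_n)$ converges weakly, the set $\{x_n:n\ge1\}$ is relatively weakly compact, so given $\varepsilon>0$ we may fix $u\in X_+$ with $\norm{(\abs{x_n}-u)^+}\le\varepsilon$ for every $n$. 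From $\abs{x_n}=\abs{x_n}\wedge u+(\abs{x_n}-u)^+$ we get $\norm{x_n}\le\norm{\abs{x_n}\wedge u}+\varepsilon$; but $x_n\xrightarrow{\rm uo}0$ means $\abs{x_n}\wedge u\xrightarrow{\rm o}0$, and since $0\le\abs{x_n}\wedge u\le u$ is order bounded, order continuity of the norm yields $\norm{\abs{x_n}\wedge u}\to0$. Hence $\limsup_n\norm{x_n}\le\varepsilon$, and letting $\varepsilon\to0$ finishes the proof.

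I do not expect a genuinely hard step here: the only new ingredient is Corollary~\ref{dis}, and the remaining work is to marshal the right classical facts — order continuity of the norm, almost-order-boundedness of relatively weakly compact sets, and the disjoint-sequence characterization of the PSP — and fit them together. The one subtlety worth flagging is that the ``only if'' argument genuinely uses the weak nullity of $(x_n)$, not merely its uo-nullity: it is needed to make $\{x_n\}$ relatively weakly compact and hence almost order bounded, and the statement becomes false if weak nullity is dropped, as the unit vectors of $\ell_1$ — which are uo-null and not norm null, while $\ell_1$ has the PSP — already show.
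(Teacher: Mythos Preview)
Your proof is correct, and your route is somewhat more self-contained than the paper's. The paper handles both implications by reducing to the earlier $\sigma$-order complete version \cite[Theorem~3.12]{GaoX:14}: for the forward direction it observes that the PSP excludes a copy of~$c_0$, hence $X$ is order continuous and in particular order complete, and then quotes the old result; for the converse it first shows $X$ is order continuous (using that every order bounded positive disjoint sequence is weakly null and, by Corollary~\ref{dis}, uo-null, hence norm null by hypothesis), and again quotes \cite[Theorem~3.12]{GaoX:14}.

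By contrast, you avoid invoking \cite[Theorem~3.12]{GaoX:14} altogether. For the ``if'' direction you go straight through the disjoint-sequence characterization of the PSP (this is \cite[Corollary~2.3.5]{Meyer-Nieberg:91}, cited later in the paper), which is a clean shortcut. For the ``only if'' direction you unpack the argument directly: the almost-order-boundedness of relatively weakly compact sets under the PSP is \cite[Theorem~3.14]{GaoX:14} (also used later in the paper in Corollary~\ref{psp_bsp}), and your splitting $\abs{x_n}=\abs{x_n}\wedge u+(\abs{x_n}-u)^+$ combined with order continuity is essentially the mechanism behind Theorem~\ref{uo-aobdd}. So your argument trades one black-box citation for two other standard citations plus a short explicit computation; the paper's version is terser, while yours makes the interaction between uo-convergence, almost order boundedness, and norm convergence visible.
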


\begin{proof}
  If $X$ has the PSP then $X$ contains no copy of~$c_0$, so that $X$ is order continuous and, therefore, order complete. The result now follows from \cite[Theorem~3.12]{GaoX:14}. Conversely, suppose that  $x_n\xrightarrow{\rm uo,w}0$ in $X$ implies $\norm{x_n}\rightarrow0$. Again, it suffices to prove that $X$ is order continuous; it will then follow from \cite[Theorem~3.12]{GaoX:14} that $X$ has the PSP. Let $(x_n)$ be an order bounded positive disjoint sequence. It is easy to see that $x_n\xrightarrow{\rm w}0$. By Corollary~\ref{dis}, $x_n\xrightarrow{\rm uo}0$. Thus, by the assumption, $x_n\to 0$ in norm. This yields that $X$ is order continuous.
\end{proof}

\begin{remark}
In \cite[Definition 5.1]{Astashkin:08}, the authors introduce the \term{Wm property} for an r.i.\ space $X$ on $[0,1]$ as follows: $X$ is said to have the Wm property if $x_n\to 0$ whenever $x_n\xrightarrow{{\rm w},\mu}0$ (i.e.,
$(x_n)$ converges to zero weakly and in measure). We claim that this property is equivalent to the PSP. Indeed, suppose that $X$ has the PSP; let $x_n\xrightarrow{{\rm w},\mu}0$. Then every subsequence of $(x_n)$ has a further subsequence which converges to zero a.e.; hence, it is uo-null by Remark~\ref{uo-kothe} and is, therefore, norm null by Theorem~\ref{PSP_char}. It follows that $x_n\to 0$, so that $X$ has the Wm property. The proof of the converse implication is similar.
\end{remark}

We now introduce a useful way of translating uo-convergence  to order convergence, which is often easier to work with. Recall from \cite[Definition~7.1]{Aliprantis:03} that a vector lattice is said to be \term{$\sigma$-laterally complete} if every disjoint sequence has a supremum. The following elegant result is mentioned as a comment in \cite{Nakano:48} and is formally proved in \cite[Theorem~3.2]{Kaplan:97}.

\begin{theorem}\cite{Nakano:48,Kaplan:97}\label{uo-oconv-ucompl}
  A sequence $(x_n)$ in a $\sigma$-order complete and $\sigma$-laterally complete vector lattice $X$ is uo-null iff it is order null.
\end{theorem}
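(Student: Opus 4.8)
The ``only if'' direction is trivial, since order convergence always implies uo-convergence, so the plan is to prove the converse. Assume $x_n\xrightarrow{\rm uo}0$; replacing $x_n$ by $\abs{x_n}$ we may assume $x_n\ge 0$. First I would reduce to the case of a weak unit. Let $B$ be the band generated by $\{x_n:n\ge 1\}$ in~$X$. As $X$ is $\sigma$-order complete it has the principal projection property, so, writing $R_k$ for the band projection onto $\{x_1\vee\dots\vee x_k\}^{dd}$ (with $R_0=0$) and setting $d_1=x_1$ and $d_k=(I-R_{k-1})x_k$, we obtain a disjoint sequence $(d_k)$ in $B_+$ whose supremum $e$ exists by $\sigma$-lateral completeness; a routine induction using $x_k=R_{k-1}x_k+d_k$ shows that $e$ is a weak unit of~$B$. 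Being a band, $B$ is itself $\sigma$-order complete and $\sigma$-laterally complete, and it is an ideal of~$X$, hence regular. By Corollary~\ref{subl} the sequence is uo-null in~$B$; and once it is shown to be order null in~$B$ it is order bounded in~$B$, hence in~$X$, and therefore order null in~$X$ by Corollary~\ref{reg-obdd-twoway}. So from now on I would assume that $X$ has a weak unit~$e$.

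Next I would pass to the universal completion~$X^u$. Since $X$ is order dense in~$X^u$ it is regular there, so $x_n\xrightarrow{\rm uo}0$ in~$X^u$ by Theorem~\ref{uo_regular}, and $e$ is a weak unit of~$X^u$ as well. In the universally complete space~$X^u$ a uo-null sequence is order null --- this is the universally complete case of the theorem, most transparently seen from the representation $X^u\cong C_\infty(K)$ with $K$ extremally disconnected, where uo-nullity of a sequence amounts to pointwise convergence to zero off a meager set, which forces the pointwise supremum to be finite off a nowhere dense set. Hence $z:=\sup_n x_n$ exists in~$X^u$. It then suffices to prove that $z\in X$: for then $(x_n)$ is order bounded in~$X$, and since uo-convergence coincides with order convergence for order bounded nets, $x_n\xrightarrow{\rm o}0$ in~$X$.

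The heart of the argument is to show $z\in X$, and here I would use the regularity of $X$ in~$X^u$ together with $\sigma$-order completeness. For each~$m$, $z\wedge me=\sup_n(x_n\wedge me)$ is the supremum of a family that is order bounded (by~$me$) and contained in~$X$, so $z\wedge me\in X$ by $\sigma$-order completeness, the supremum being the same in $X$ and in $X^u$ by regularity. Consequently, for each~$\ell$, the level band $L_\ell:=\{(z-\ell e)^+\}^{dd}\cap\{(z-(\ell+1)e)^+\}^d$ of~$X^u$ meets $X$ in a \emph{principal}, hence projection, band of~$X$ (it is generated in~$X$ by differences of the vectors $z\wedge me\in X$), so the band projection $\Pi_\ell$ of~$X^u$ onto $L_\ell$ maps $X$ into~$X$. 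Since $0\le\Pi_\ell x_n\le\Pi_\ell z\le(\ell+1)e$ for all~$n$ (as $z\le(\ell+1)e$ on~$L_\ell$), the family $(\Pi_\ell x_n)_n$ is order bounded in~$X$, so its supremum lies in~$X$ and, by regularity, equals~$\Pi_\ell z$. Finally the vectors $\Pi_\ell z$ are pairwise disjoint and their partial sums $\sum_{\ell\le L}\Pi_\ell z$ (which, by disjointness, coincide with the finite suprema $\bigvee_{\ell\le L}\Pi_\ell z$) increase to~$z$ in~$X^u$, so $z=\sup_\ell\Pi_\ell z$; by $\sigma$-lateral completeness $\sup_\ell\Pi_\ell z$ exists in~$X$, and by regularity it equals~$z$. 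Thus $z\in X$, completing the proof.

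I expect the main obstacle to be this last step --- transporting the level-band projections from $X^u$ down to~$X$, which is exactly where $\sigma$-order completeness (to supply the required projection bands of~$X$) and the regularity machinery of Sections~\ref{sec:reg}--\ref{uo-sec} (notably Theorem~\ref{reg_prop1} and Corollary~\ref{reg-obdd-twoway}) come into play --- and then reassembling $z$ from the disjoint pieces $\Pi_\ell z$ using $\sigma$-lateral completeness. That $\sigma$-lateral completeness cannot be dropped is witnessed by $\ell_\infty$, in which the unit vectors $(e_n)$ are uo-null but not order null.
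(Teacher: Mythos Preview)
The paper does not give its own proof of this theorem: it is quoted from \cite{Kaplan:97} (Theorem~3.2 there). Kaplan's argument, which the paper mimics in the proof of the parallel Theorem~\ref{uo-ucomp0}, works \emph{directly} inside~$X$: after the weak-unit reduction, it invokes the characterization \cite[Theorem~2.8]{Kaplan:97} to get a disjoint decomposition $e=\sum e_n$ of the unit into components on which the sequence is eventually dominated by a fixed multiple of~$e$, and then assembles an order bound using $\sigma$-lateral completeness. There is no passage to~$X^u$ and no $C_\infty(K)$ representation.

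Your route is genuinely different --- reduce to the universally complete case, then pull the supremum back --- but it has a real gap. The sentence ``in the universally complete space $X^u$ a uo-null sequence is order null --- this is the universally complete case of the theorem'' is exactly what is being proved (for that class of spaces), and your $C_\infty(K)$ justification does not fill this in. The assertion that uo-nullity in $C_\infty(K)$ ``amounts to pointwise convergence to zero off a meager set'' is itself a non-trivial theorem about order convergence in $C(K)$ for Stonean~$K$; it is neither proved in your sketch nor available from the tools of Sections~\ref{sec:reg}--\ref{uo-sec}. As written, your argument therefore proves only the conditional statement ``if the theorem holds in~$X^u$ then it holds in~$X$'', which, while the level-band transfer in your last paragraph is a nice idea and can be made rigorous, does not by itself establish the result. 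If you want to salvage the approach, you would need an honest, self-contained proof of the universally complete case --- and at that point Kaplan's direct band-projection argument is both shorter and already does the general $\sigma$-case.

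A minor but concrete error: your closing counterexample is wrong. In~$\ell_\infty$ the unit vectors $(e_n)$ \emph{are} order null (take $u_n=\chi_{\{k\,:\,k\ge n\}}$; then $e_n\le u_n\downarrow 0$). The space that witnesses the failure without $\sigma$-lateral completeness is~$c_0$, where $(e_n)$ is uo-null but not order null --- exactly the example the paper mentions just before Theorem~\ref{uo_regular}.
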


Modifying the proof in~\cite{Kaplan:97}, we obtain the following result; cf. Proposition~\ref{L0}.

\begin{theorem}\label{uo-ucomp0}
A sequence $(x_n)$ in a $\sigma$-order complete and $\sigma$-laterally complete vector lattice $X$ is uo-Cauchy iff it is o-convergent.
\end{theorem}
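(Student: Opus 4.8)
I would first dispose of the easy direction: if $x_n\xrightarrow{\rm o}x$ then $(x_n)$ is order Cauchy, since $\abs{x_n-x_m}\le\abs{x_n-x}+\abs{x_m-x}$ is dominated by twice the net witnessing $x_n\xrightarrow{\rm o}x$; and order Cauchy implies uo-Cauchy because $\abs{x_n-x_m}\wedge u\le\abs{x_n-x_m}$. The substance is the converse, and there the clean skeleton is: \emph{it suffices to show that some tail $(x_n)_{n\ge N}$ is order bounded}. Indeed, if $(x_n)_{n\ge N}$ is order bounded then so is the double net $(x_n-x_m)_{n,m\ge N}$, and an order bounded uo-null net is order null, so $(x_n)$ is order Cauchy; Proposition~\ref{oconv-oCauchy} then yields that $(x_n)$ is order convergent. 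Note that $\sigma$-lateral completeness is genuinely needed at this point: $x_n=n\chi_{[\frac1{n+1},\frac1n]}$ is a disjoint, hence uo-null, sequence in $L_1[0,1]$ that has no order bounded tail.

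For the order-boundedness of a tail I would adapt Kaplan's proof of Theorem~\ref{uo-oconv-ucompl}. Passing to $x_n^+$ and $x_n^-$ (each uo-Cauchy, since $\abs{x_n^\pm-x_m^\pm}\le\abs{x_n-x_m}$) one may assume $x_n\ge0$. A useful preliminary: for fixed $u\in X_+$ the inequality $\abs{x_n\wedge u-x_m\wedge u}\le\abs{x_n-x_m}\wedge u\xrightarrow{\rm o}0$ shows $(x_n\wedge u)_n$ is order Cauchy, hence order convergent in $[0,u]$ to some $x_u$ by Proposition~\ref{oconv-oCauchy}; these truncated limits are compatible, $x_u\wedge v=x_{u\wedge v}$, so $(x_u)_{u\in X_+}$ increases with $u$, and the natural candidate for the order limit is $x:=\sup_u x_u$. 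The crux is that this supremum exists in $X$. Here $\sigma$-order completeness furnishes, for each $N$, a sequence $g^N_k\downarrow0$ (as $k\to\infty$) with $\abs{x_n-x_m}\wedge u_N\le g^N_k$ for $n,m\ge k$, where $u_N:=x_1\vee\dots\vee x_N$; diagonalising over the countable family $(u_N)$ one extracts a subsequence $(x_{n_k})$ whose successive ``fresh'' masses, suitably disjointified, form a disjoint sequence $(b_k)$, and then $\sigma$-lateral completeness provides $b:=\bigvee_k b_k$. One checks that $b$, together with finitely many initial terms, order bounds $(x_n)$; in fact the same data should yield a sequence $a_n\downarrow0$ with $\abs{x_n-x}\le a_n$, so that $x_n\xrightarrow{\rm o}x$ directly (alternatively, $x_n-x\xrightarrow{\rm uo}0$ and Theorem~\ref{uo-oconv-ucompl} applies).

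The step I expect to be the real obstacle is the production of the dominating element $b$. In $L_0(\mu)$ the analogue is transparent (cf.\ Proposition~\ref{L0}): an a.e.-Cauchy sequence is a.e.-convergent and pointwise bounded, so $\sup_n\abs{x_n}$ exists in $L_0(\mu)$. Abstractly there is no underlying point space, so the bound must be manufactured by disjointification, and the subtlety is that the obvious fresh mass $(x_{n_k}-u_{n_{k-1}})^+$ is \emph{not} disjoint from $u_{n_{k-1}}$; one has to combine the band projections available in the $\sigma$-order complete lattice $X$ with the $g^N_k$-estimates to extract genuinely disjoint pieces whose supremum still dominates a tail of $(x_n)$. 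Once $b$ (equivalently, the order bound on a tail) is in hand, the proof closes through the skeleton above.
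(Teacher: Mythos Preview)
Your skeleton is right and matches the paper exactly: only the forward direction needs work, and for that it suffices to show $(x_n)$ is order bounded (then uo-Cauchy becomes o-Cauchy, and Proposition~\ref{oconv-oCauchy} finishes). Your reduction to $x_n\ge0$ via $\abs{x_n^\pm-x_m^\pm}\le\abs{x_n-x_m}$ is also fine.

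The gap is precisely where you flag it: the construction of the dominating element~$b$. Your ``fresh mass'' sketch with $u_N=x_1\vee\dots\vee x_N$ and a diagonalised subsequence is not carried out, and the outline does not make clear how the uo-Cauchy hypothesis---which controls $\abs{x_n-x_m}\wedge u$ for \emph{fixed} $u$---translates into control of the \emph{unbounded} pieces $(x_{n_k}-u_{n_{k-1}})^+$. You also correctly note that these pieces are not disjoint, and ``suitably disjointified'' is where all the content lives. As written, this is an outline of a hope rather than a proof.

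The paper's construction is different and avoids the fresh-mass difficulty entirely. First, it reduces (via \cite[Lemma~3.1]{Kaplan:97}) to the case where $X$ has a weak unit $e>0$. It then invokes Kaplan's characterisation \cite[Theorem~2.8]{Kaplan:97}: $a_\alpha\xrightarrow{\rm uo}0$ iff the components $e_{(a_\alpha-ne)^+}\xrightarrow{\rm o}0$ for every~$n$. Applied with $n=1$ to the uo-Cauchy double net, this yields
\[
  d_n:=\sup_{k\ge l\ge n}e_{(\abs{x_k-x_l}-e)^+}\downarrow 0.
\]
The key move is to disjointify \emph{the weak unit}, not the sequence: set $e_1=e-d_1$ and $e_n=d_{n-1}-d_n$. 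These are pairwise disjoint components of $e$ with $\sum_{i=1}^n P_{e_i}\uparrow I$, and on the band $B_{e_n}$ one has $P_{e_n}\abs{x_m-x_n}\le e_n$ for $m\ge n$, hence $P_{e_n}\abs{x_m}\le e_n+P_{e_n}\abs{x_n}=:b_n$ for all $m\ge n$. Now $\sigma$-lateral completeness gives $b:=\sup_n b_n$, and $\abs{x_m}=\sup_n P_{e_n}\abs{x_m}\le b$.

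So the missing idea is twofold: the weak-unit reduction, and the use of Kaplan's component criterion to decompose $e$ (rather than the $x_n$) into disjoint bands on which a tail of $(x_m)$ is uniformly bounded. Your approach tries to build the bound from the $x_n$'s directly, which is why the disjointness fails; the paper builds the bound band by band from a partition of~$e$.
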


\begin{proof}
Observe that only the necessity part needs proof. Let $(x_n)$ be uo-Cauchy in~$X$. It suffices to show that $(x_n)$ is order bounded because in this case, $(x_n)$ would be o-Cauchy and thus be o-convergent by Proposition~\ref{oconv-oCauchy}. In view of \cite[Lemma~3.1]{Kaplan:97}, we may assume that $X$ has a weak unit $e>0$.

For any $x\geq 0$, denote by $B_x$ the band generated by $x$ and by $P_x$ the band projection onto~$B_x$. Put $e_x=P_xe$.
\cite[Theorem~2.8]{Kaplan:97} states that for a net $(a_\alpha)$ in $X_+$ one has $a_\alpha\xrightarrow{\rm uo}0$ iff $e_{(a_\alpha-ne)^+}\xrightarrow{\rm o}0$ for every~$n$. It thus follows that
$e_{(\abs{x_m-x_n}-e)^+}\xrightarrow{\rm o}0$ as $(m,n)\to\infty$, and, therefore,
$\inf_{n,m\geq 1}\sup_{k\geq n,l\geq m}e_{(\abs{x_k-x_l}-e)^+}=0$,
or equivalently,
$\inf_{n\geq 1}\sup_{k\geq l\geq n}e_{(\abs{x_k-x_l}-e)^+}=0$, which can be reformulated as $$d_n:=\sup_{k\geq l\geq n}e_{(\abs{x_k-x_l}-e)^+}\downarrow0.$$
Put $e_1=e-d_1$ and $e_n=d_{n-1}-d_n$ for $n\geq 2$. We claim the following three properties of $B_{e_n}$'s and $P_{e_n}$'s:

(1) $B_{e_n}$'s are disjoint. Indeed, it follows from \cite[Theorem~1.49]{Aliprantis:06} that $e_n$'s are components of $e$ and are disjoint. Hence, $B_{e_n}$'s are disjoint.

(2) $\sum_{i=1}^nP_{e_i}x\uparrow x$ for any $x\in X_+$. Indeed, 
$\sum_{i=1}^nP_{e_i}x=P_{\sum_{i=1}^ne_i}x=P_{e-d_n}x\uparrow  x$ by \cite[Theorem~1.48]{Aliprantis:06}.

(3) For each~$n$, $\bigl(P_{e_n}\abs{x_m}\bigr)_{m=1}^\infty$ has an upper bound $b_n$ in $B_{e_n}$. Observe first that $P_{e-e_{x^+}}(x+e)\leq e$ for any $x\in X$.
Indeed, since $B_{x^+}=B_{e_{x^+}}$, we have $x\leq x^+=P_{e_{x^+}}(x)\leq P_{e_{x^+}}(x+e)$, and so 
\begin{displaymath}
  P_{e-e_{x^+}}(x+e)=(x+e)-P_{e_{x^+}}(x+e)\leq e.
\end{displaymath}
Now for any $m\geq n$, we have $$P_{e-d_n}\abs{x_m-x_n}\leq P_{e-e_{(\abs{x_m-x_n}-e)^+}}\abs{x_m-x_n}\leq e,$$
and thus
$$P_{e_n}\abs{x_m-x_n}=P_{e_n}P_{e-d_n}\abs{x_m-x_n}\leq P_{e_n}e=e_n.$$
Consequently,
$$P_{e_n}\abs{x_m}\leq e_n+P_{e_n}\abs{x_n}\text{ for any }m\geq n.$$
The desired result follows immediately.

Finally, since $b_n$'s are disjoint by (1), the supremum $b:=\sup_nb_n$ exists in~$X$. For any $m\geq 1$, since $\sum_{i=1}^nP_{e_i}\abs{x_m}=\bigvee_{i=1}^nP_{e_i}\abs{x_m}\leq b$ for any~$n$, we have, by (2), that $\abs{x_m}\leq b$.
\end{proof}

\begin{remark}
  The proof of Theorem~\ref{uo-oconv-ucompl} in \cite{Kaplan:97} and our proof of Theorem~\ref{uo-ucomp0} both utilize  \cite[Theorem~2.8]{Kaplan:97}, which is stated in~\cite{Kaplan:97} only for order complete vector lattices. However, it can be easily verified that its proof in~\cite{Kaplan:97} remains valid for countably indexed nets in $\sigma$-order complete vector lattices.
\end{remark}

For a vector lattice~$X$, denote by $X^u$ its \term{universal
  completion}, cf.~\cite[Definition~7.20]{Aliprantis:03}. We would
like to thank J.J.~Grobler for suggesting a variant of the following result to us.

\begin{corollary}\label{uo-ucompl}
A sequence $(x_n)$ in a vector lattice $X$ is uo-null in $X$ iff it is o-null in~$X^u$; it is uo-Cauchy in $X$ iff it is o-convergent in~$X^u$.
\end{corollary}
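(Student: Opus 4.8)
\textbf{Proof proposal.} The plan is to combine the transfer principle of Theorem~\ref{uo_regular} with the intrinsic characterizations of uo-convergence in laterally complete spaces (Theorems~\ref{uo-oconv-ucompl} and~\ref{uo-ucomp0}). First I would recall the two structural facts about the universal completion that we need: $X$ sits inside $X^u$ as an order dense sublattice (see \cite[Theorem~7.23]{Aliprantis:03}), and $X^u$ is order complete and laterally complete, in particular $\sigma$-order complete and $\sigma$-laterally complete. Since order dense sublattices are regular, $X$ is a regular sublattice of~$X^u$.

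For the first assertion, let $(x_n)$ be a sequence in~$X$. By Theorem~\ref{uo_regular} applied to the regular pair $X\subseteq X^u$, we have $x_n\xrightarrow{\rm uo}0$ in $X$ if and only if $x_n\xrightarrow{\rm uo}0$ in~$X^u$. But $X^u$ is $\sigma$-order complete and $\sigma$-laterally complete, so Theorem~\ref{uo-oconv-ucompl} gives that $x_n\xrightarrow{\rm uo}0$ in $X^u$ if and only if $x_n\xrightarrow{\rm o}0$ in~$X^u$. Chaining the two equivalences yields the claim.

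For the second assertion, observe that $(x_n)$ is uo-Cauchy in $X$ precisely when the double net $(x_n-x_m)_{(n,m)}$ uo-converges to $0$ in~$X$; this double net is a net in the sublattice~$X$, so Theorem~\ref{uo_regular} shows it is uo-null in $X$ if and only if it is uo-null in~$X^u$. Hence $(x_n)$ is uo-Cauchy in $X$ if and only if it is uo-Cauchy in~$X^u$. Applying Theorem~\ref{uo-ucomp0} to the $\sigma$-order complete and $\sigma$-laterally complete space~$X^u$, the latter is equivalent to $(x_n)$ being order convergent in~$X^u$, which completes the proof.

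The argument is essentially a bookkeeping exercise once the right lemmas are assembled; the only point that requires any care is making sure that the hypotheses of Theorems~\ref{uo-oconv-ucompl} and~\ref{uo-ucomp0} genuinely hold for $X^u$ and that the double-net reformulation of the uo-Cauchy condition is compatible with the net version of Theorem~\ref{uo_regular}. Neither presents a real obstacle, so I do not anticipate any hard step.
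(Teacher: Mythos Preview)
Your proof is correct and follows essentially the same approach as the paper: use order density of $X$ in $X^u$ to get regularity, transfer uo-convergence (or uo-Cauchyness via the double net) to $X^u$ by Theorem~\ref{uo_regular}, and then invoke Theorem~\ref{uo-oconv-ucompl} or Theorem~\ref{uo-ucomp0}. The paper's proof is a one-line sketch of exactly this argument; you have simply spelled out the details, including the double-net step that the paper leaves implicit.
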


\begin{proof}
Observe that $X$ is order dense and thus is regular in~$X^u$. Apply Theorem~\ref{uo_regular}, and then Theorem~\ref{uo-oconv-ucompl} or Theorem~\ref{uo-ucomp0}, respectively.
\end{proof}

We present an application of Corollary~\ref{uo-ucompl} which asserts that uo-convergence is preserved under taking Cesaro means.

\begin{corollary}
\label{uo-Cesaro-uo0}
  Let $(x_n)$ be a sequence in a vector lattice~$X$. If $(x_n)$ is uo-null (respectively, uo-Cauchy) in $X$ then so are its Ces\`aro means.
\end{corollary}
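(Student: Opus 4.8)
The plan is to transfer the statement to the universal completion $X^u$ and run the classical Ces\`aro argument there, where the ambient order completeness makes the $\inf$--$\sup$ description of order convergence available. Write $a_n=\frac1n\sum_{k=1}^nx_k$; note that each $a_n$ lies in~$X$.

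First assume $(x_n)$ is uo-null in~$X$. By Corollary~\ref{uo-ucompl}, $x_n\xrightarrow{\rm o}0$ in~$X^u$, which is order complete. Then $(x_n)$ is order bounded in $X^u$ (dominate a tail by a member of the defining net and adjoin the finitely many initial terms), so $c_n:=\sup_{m\ge n}\abs{x_m}$ exists in $X^u$ and $c_n\downarrow0$; moreover $\abs{a_m}\le c_1$ for all $m$, so $(a_n)$ is order bounded as well. The heart of the matter is the lemma: \emph{in an order complete vector lattice, $x_n\xrightarrow{\rm o}0$ implies $a_n\xrightarrow{\rm o}0$.} To prove it, fix $k$; for $n\ge k$ and $m\ge n$, split
\[
  \tfrac1m\sum_{j=1}^m\abs{x_j}
  =\tfrac1m\sum_{j=1}^{k-1}\abs{x_j}+\tfrac1m\sum_{j=k}^m\abs{x_j}
  \le \tfrac1n v_k+c_k ,
\]
where $v_k=\sum_{j=1}^{k-1}\abs{x_j}$, using $\abs{x_j}\le c_k$ for $j\ge k$ and $\tfrac1m\le\tfrac1n$. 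Since $\abs{a_m}\le\tfrac1m\sum_{j=1}^m\abs{x_j}$ and the right-hand side is decreasing in $m$, we get $\sup_{m\ge n}\abs{a_m}\le\tfrac1n v_k+c_k$ for every $n\ge k$, hence
\[
  \inf_n\sup_{m\ge n}\abs{a_m}\ \le\ \inf_{n\ge k}\Bigl(\tfrac1n v_k+c_k\Bigr)\ =\ c_k ,
\]
the last equality holding because $\tfrac1n v_k\downarrow0$ by the Archimedean property and infima are translation invariant. As $k$ is arbitrary and $c_k\downarrow0$, this forces $\inf_n\sup_{m\ge n}\abs{a_m}=0$, so $a_n\xrightarrow{\rm o}0$ in $X^u$ by Remark~\ref{ocompl-oconv}. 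Finally, $(a_n)$ is a sequence in $X$ that is o-null in $X^u$, so Corollary~\ref{uo-ucompl} gives $a_n\xrightarrow{\rm uo}0$ in~$X$.

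For the uo-Cauchy case, Corollary~\ref{uo-ucompl} provides $x\in X^u$ with $x_n\xrightarrow{\rm o}x$ in~$X^u$. The Ces\`aro means of the o-null sequence $(x_n-x)$ are exactly $a_n-x$, so applying the lemma to $(x_n-x)$ gives $a_n-x\xrightarrow{\rm o}0$, i.e.\ $a_n\xrightarrow{\rm o}x$ in~$X^u$. Thus $(a_n)$ is order convergent, hence order Cauchy, in $X^u$, and Corollary~\ref{uo-ucompl} again yields that $(a_n)$ is uo-Cauchy in~$X$.

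I do not expect a real obstacle: the content is the elementary Ces\`aro splitting, and everything else is furnished by Corollary~\ref{uo-ucompl} and Remark~\ref{ocompl-oconv}. The only points needing a little care are checking that $(x_n)$ (and $(a_n)$) is order bounded in $X^u$, so that the $\inf$--$\sup$ criterion of Remark~\ref{ocompl-oconv} applies, and the identity $\inf_{n}\bigl(\tfrac1n v_k+c_k\bigr)=c_k$, which reduces to the Archimedean property together with translation invariance of infima.
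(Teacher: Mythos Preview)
Your proof is correct and follows the same overall strategy as the paper: both reduce the statement to the universal completion via Corollary~\ref{uo-ucompl} and then prove the elementary lemma that Ces\`aro means of an order-null sequence are order-null in an order complete lattice. The only difference is in how that lemma is argued. The paper (Lemma~\ref{o-cesaro}) exhibits a single explicit dominating sequence using the $\sqrt{n}$ trick: with $u_n\downarrow 0$ dominating $\abs{x_n}$ and $k_n=\lfloor\sqrt{n}\rfloor$, one has $\bigl|\tfrac1n\sum_{i=1}^n x_i\bigr|\le \tfrac{k_n}{n}u_1+u_{k_n}\downarrow 0$, which needs only $\sigma$-order completeness. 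You instead run a two-parameter limit via the $\inf$--$\sup$ criterion of Remark~\ref{ocompl-oconv}: fix $k$, send $n\to\infty$ to kill $\tfrac1n v_k$, then send $k\to\infty$ to kill $c_k$. This is equally valid, though it leans on full order completeness (to form $c_n=\sup_{m\ge n}\abs{x_m}$ and to invoke the criterion), which is harmless since $X^u$ provides it. One small wording issue: the phrase ``the right-hand side is decreasing in $m$'' is unnecessary and potentially misleading---your displayed bound $\tfrac1n v_k+c_k$ is already independent of $m$, so the estimate $\sup_{m\ge n}\abs{a_m}\le\tfrac1n v_k+c_k$ follows immediately.
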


\begin{proof}
By Corollary~\ref{uo-ucompl}, it suffices to prove that if $(x_n)$ is o-convergent in $X^u$ then its Ces\`aro means o-converge to the same limit in~$X^u$. This follows immediately from the lemma below.
\end{proof}

\begin{lemma}\label{o-cesaro}
Let $(x_n)$ be a sequence in a $\sigma$-order complete vector lattice~$X$. If $x_n\xrightarrow{\rm o}0$ then the Ces\`aro means of $(x_n)$ converge in order to zero.
\end{lemma}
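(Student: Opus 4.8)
The plan is to work directly with the characterization of order convergence in a $\sigma$-order complete vector lattice from Remark~\ref{ocompl-oconv}. Since $x_n \xrightarrow{\rm o} 0$ in the $\sigma$-order complete lattice $X$, the sequence $(x_n)$ is eventually order bounded, and after passing to a tail (which changes the Ces\`aro means only by a null sequence, a point I would address at the end) we may assume $(\abs{x_n})$ is order bounded, say by $u \in X_+$. Set $v_n = \sup_{k \ge n}\abs{x_k}$; these exist by $\sigma$-order completeness, and Remark~\ref{ocompl-oconv} gives $v_n \downarrow 0$. The Ces\`aro means satisfy $\abs{a_n} = \bigl\lvert\frac1n\sum_{k=1}^n x_k\bigr\rvert \le \frac1n\sum_{k=1}^n \abs{x_k}$, so it suffices to show $\frac1n\sum_{k=1}^n \abs{x_k} \xrightarrow{\rm o} 0$; that is, I may assume $x_n \ge 0$ and $x_n \downarrow\!\!\!\!\uparrow\ $-bounded with $v_n \downarrow 0$.

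The key estimate is the standard splitting of the Ces\`aro average at an index $m$: for $n > m$,
\[
  a_n = \frac1n\sum_{k=1}^n x_k
  \le \frac1n\sum_{k=1}^m x_k + \frac1n\sum_{k=m+1}^n x_k
  \le \frac{m}{n}\, v_1 + v_{m+1},
\]
using $x_k \le v_1$ for $k \le m$ and $x_k \le v_{m+1}$ for $k > m$. So for each fixed $m$, $\limsup_n a_n \le v_{m+1}$ in the sense that $\sup_{n > N(m,\varepsilon)} a_n \le$ something bounded by $\frac{m}{n}v_1 + v_{m+1}$. To turn this into a genuine order-convergence statement I would define the dominating net explicitly. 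Put $w_m = \sup_{n \ge m} a_n$; I need to show $w_m$ exists and $w_m \downarrow 0$, and then $a_n \le w_n$ with $w_n \downarrow 0$ gives $a_n \xrightarrow{\rm o} 0$. Existence of $w_m$ follows because $a_n \le v_1$ for all $n$, so $(a_n)_{n \ge m}$ is order bounded and $X$ is $\sigma$-order complete. For $w_m \downarrow 0$: clearly $w_m$ decreases; to see the infimum is $0$, suppose $0 \le z \le w_m$ for all $m$. Fix $m$; from the displayed inequality, for all $n$ (including $n \le m$, trivially, after enlarging) we can bound $a_n \le \frac{m}{n}v_1 + v_{m+1}$ only for $n > m$, but $w_{m+1} = \sup_{n \ge m+1} a_n$, and I want to push $\frac{m}{n}v_1 \to 0$. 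This is where the Archimedean property enters: $\frac{m}{n}v_1 \downarrow_n 0$ for fixed $m$ (since $\frac1n v_1 \downarrow 0$ by the Archimedean axiom), so actually I should bound $\sup_{n \ge N} a_n \le \frac{m}{N}v_1 + v_{m+1}$ for $N > m$, hence $z \le \frac{m}{N}v_1 + v_{m+1}$ for all $N$, giving $z \le v_{m+1}$; letting $m \to \infty$ and using $v_{m+1}\downarrow 0$ yields $z = 0$. Thus $w_m \downarrow 0$.

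The main obstacle is purely bookkeeping: assembling the two-parameter estimate $\sup_{n \ge N} a_n \le \frac{m}{N} v_1 + v_{m+1}$ cleanly and verifying that $\inf_N \frac{m}{N}v_1 = 0$ for each fixed $m$ via the Archimedean property, then taking the iterated infimum over $m$. No single step is deep, but one must be careful that all the suprema and infima are taken in $X$ (legitimate by $\sigma$-order completeness, since everything in sight is dominated by $v_1$) and that the final dominating sequence $(w_n)$ is indexed by the same $n$ as $(a_n)$, which it is by construction. Finally I would restore the reduction: if the original $(x_n)$ is only eventually order bounded, replace it by a tail; the Ces\`aro means of the full sequence differ from (a reindexing of) those of the tail by terms of the form $\frac1n \sum_{k=1}^{N_0} x_k$, which order-converge to $0$ as $n \to \infty$ again by the Archimedean property, so the conclusion is unaffected.
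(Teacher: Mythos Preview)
Your argument is correct and rests on the same splitting estimate as the paper's proof: with $v_n=\sup_{k\ge n}\abs{x_k}\downarrow 0$, one has $a_n\le\frac{m}{n}\,v_1+v_{m+1}$ for $n>m$. The only difference lies in how the two-parameter bound is converted into a single dominating sequence. You define $w_m=\sup_{n\ge m}a_n$ and argue by iterated limits (first $N\to\infty$ via the Archimedean property to obtain $z\le v_{m+1}$, then $m\to\infty$) that $\inf_m w_m=0$. The paper instead couples the parameters by setting $m=k_n=\lfloor\sqrt{n}\rfloor$, so that
\[
  a_n\le\frac{k_n}{n}\,v_1+v_{k_n},
\]
and both summands decrease to zero in $n$ simultaneously; this delivers an explicit dominating sequence in one line and bypasses the $w_m$ construction and the lower-bound argument entirely. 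Your route is a bit longer but equally valid; the diagonal choice $k_n=\lfloor\sqrt{n}\rfloor$ is a standard shortcut worth knowing. As a minor point, the tail reduction is unnecessary: once $(x_n)$ is eventually order bounded, $v_n=\sup_{k\ge n}\abs{x_k}$ already exists for \emph{every} $n$, being the supremum of finitely many initial terms together with the bounded tail supremum.
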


\begin{proof}
Since $X$ is $\sigma$-order complete, by Remark~\ref{ocompl-oconv} we can find a \emph{sequence} $(u_n)$ such that $u_n\downarrow 0$ and $\abs{x_n}\le u_n$ for every~$n$. Let $k_n$ be the integer part of~$\sqrt{n}$. Then
\begin{displaymath}
 \Bigabs{\tfrac1n\sum_{i=1}^nx_i}\le\tfrac1n\sum_{i=1}^nu_i\le
\tfrac1n\sum_{i=1}^{k_n-1}u_i+\tfrac1n\sum_{i=k_n}^{n}u_i\\ \le
\frac{k_n}{n}u_1+u_{k_n}\downarrow 0
\end{displaymath}
because $\frac{k_n}{n}u_1\downarrow 0$ and $u_{k_n}\downarrow 0$.
\end{proof}

We end this section with an interesting result that will be needed later. A subset $A$ of a vector lattice $X$ is said to be \term{uo-closed} (respectively, \term{o-closed}) in~$X$, if for any net $(x_\alpha)\subset A$ and $x\in X$ with $x_\alpha\xrightarrow{\rm uo} x$ (respectively, $x_\alpha\xrightarrow{\rm o} x$) in~$X$, one has $x\in A$.

\begin{proposition}\label{uo_prop1}
Let $X$ be a vector lattice and $Y$ a sublattice of~$X$. Then $Y$ is uo-closed in $X$ if and only if it is o-closed in~$X$.
\end{proposition}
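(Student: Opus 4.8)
One direction is trivial: since order convergence is a special case of uo-convergence (namely, uo-convergence that happens to be order bounded and "order-witnessed"), every uo-closed set is automatically o-closed; more precisely, if $x_\alpha \xrightarrow{\rm o} x$ then $x_\alpha \xrightarrow{\rm uo} x$, so o-closedness is the weaker condition and uo-closed $\Rightarrow$ o-closed. The content is in the converse: assuming $Y$ is o-closed (and a sublattice), I must show $Y$ is uo-closed. So suppose $(x_\alpha)$ is a net in $Y$ with $x_\alpha \xrightarrow{\rm uo} x$ in $X$; I need $x \in Y$.

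The plan is to reduce a uo-convergence statement in $X$ to an order-convergence statement inside $Y$, using the machinery already built. First I would invoke Theorem~\ref{reg_prop1} / Corollary~\ref{reg-obdd-twoway}-style reasoning, but the cleaner route is: pass to $X^\delta$ and work with the ideal generated by $Y$. Actually the key trick should be truncation. Since $(x_\alpha)$ is a net in $Y$ and $Y$ is a sublattice, for any fixed $y_0 \in Y_+$ the net $z_\alpha := (x_\alpha \vee (-y_0)) \wedge y_0$ lies in $Y$, is order bounded (by $y_0$), and satisfies $|z_\alpha - (x\vee(-y_0))\wedge y_0| \le |x_\alpha - x| \wedge (2y_0)$ — wait, I should be careful: the right comparison is $|z_\alpha - w| \le |x_\alpha - x|$ where $w = (x \vee (-y_0)) \wedge y_0$, using the nonexpansiveness of truncation in a vector lattice. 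Since $x_\alpha \xrightarrow{\rm uo} x$ and the $z_\alpha$ are order bounded, $z_\alpha \xrightarrow{\rm o} w$ in $X$ (order bounded uo-convergence is order convergence). Hmm, but I need uo-convergence to give order convergence of the truncated net: $|z_\alpha - w| \wedge u \le |x_\alpha - x| \wedge u \to^{\rm o} 0$ for all $u$, and since $(z_\alpha - w)$ is order bounded, taking $u = $ the bound gives $z_\alpha \xrightarrow{\rm o} w$ in $X$. Then o-closedness of $Y$ forces $w = (x\vee(-y_0))\wedge y_0 \in Y$.

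So I now know that $(x \vee (-y_0)) \wedge y_0 \in Y$ for every $y_0 \in Y_+$. The final step is to conclude $x \in Y$ from this. The natural approach: the net $\bigl((x\vee(-y_0))\wedge y_0\bigr)_{y_0 \in Y_+}$, indexed by $Y_+$ directed upward, should order-converge to $x$ in $X$ — this needs $Y$ to be "rich enough" that $y_0 \uparrow$ exhausts $x$, i.e. that $x^+ \wedge y_0 \uparrow x^+$ and $x^- \wedge y_0 \uparrow x^-$. The hard part will be exactly this: a priori nothing guarantees $Y$ has elements comparable to $|x|$, so the truncations need not converge to $x$. The resolution is that we must not have assumed this — rather, I suspect the correct statement uses that $Y$ is a sublattice together with a subtlety I'm missing, OR the proof picks a different auxiliary net. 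Let me reconsider: likely the right move is, for each $\alpha_0$, note $|x_\alpha| \wedge |x_{\alpha_0}|$ eventually... Actually the cleanest fix: since $x_\alpha \in Y$, apply the truncation argument with $y_0$ ranging over $\{|x_\beta| : \beta\}$, and use that $|x_\alpha| \wedge |x_{\alpha_0}| \xrightarrow{\rm uo}_\alpha |x| \wedge |x_{\alpha_0}|$, which is order bounded by $|x_{\alpha_0}| \in Y$, hence order converges, hence $|x| \wedge |x_{\alpha_0}| \in Y$; and since also $x_{\alpha_0} \xrightarrow{\rm uo} x$, one gets $|x| = \sup_{\alpha_0} |x|\wedge|x_{\alpha_0}|$... but again convergence of this sup to $|x|$ in $X$ is the obstacle. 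I expect the paper sidesteps this by instead showing directly, via the $X^\delta$ and ideal-generated-by-$Y$ reduction plus Theorem~\ref{uo_regular}, that membership propagates; the genuinely delicate point — and the one I'd flag as the main obstacle — is upgrading "all truncations of $x$ by elements of $Y_+$ lie in $Y$" to "$x \in Y$", which should follow because that net of truncations is order-Cauchy and order-convergent to $x$ in $X^\delta$ with all terms in $Y$, invoking o-closedness of $Y$ in $X$ one final time after checking the limit is the original $x$.
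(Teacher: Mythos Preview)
Your truncation approach matches the paper's strategy, and you correctly isolate the crux: from ``$x\wedge y\in Y$ for every $y\in Y_+$'' (or your two-sided variant) you must deduce $x\in Y$, and this requires showing that the net of truncations actually order-converges to $x$ in~$X$. You flag this as the main obstacle, but your proposed resolution --- that the truncation net is ``order-Cauchy and order-convergent to $x$ in $X^\delta$'' --- is a restatement of what is to be proved, not a proof of it. Nothing you have written forces $\sup_{y_0\in Y_+}\bigl((x\vee(-y_0))\wedge y_0\bigr)=x$; a priori this supremum could be a strictly smaller element (think of $Y$ supported on a proper band).

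The missing idea is a \emph{band argument}: first reduce to $x_\alpha\ge 0$, $x\ge 0$ (via $x_\alpha^{\pm}\xrightarrow{\rm uo}x^{\pm}$), and then observe that for any $z\in Y^{\mathrm d}$ one has $x_\alpha\wedge z=0$, whence $\abs{x_\alpha\wedge z-x\wedge z}\le\abs{x_\alpha-x}\wedge z\xrightarrow{\rm o}0$ gives $x\wedge z=0$. Thus $x\in Y^{\mathrm{dd}}$, the band generated by $Y$. Now there is a net $(z_\beta)$ in the ideal generated by $Y$ with $0\le z_\beta\uparrow x$ in~$X$; choosing $w_\beta\in Y_+$ with $z_\beta\le w_\beta$, one gets $z_\beta=z_\beta\wedge x\le w_\beta\wedge x\le x$, so $w_\beta\wedge x\xrightarrow{\rm o}x$. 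Since you already know each $w_\beta\wedge x\in Y$ (your truncation step), o-closedness of $Y$ finishes the proof. Without the observation $x\in Y^{\mathrm{dd}}$, the final step simply does not go through.
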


\begin{proof}The ``only if'' part is straightforward since order convergent nets are uo-convergent.

For the ``if'' part, suppose $Y$ is order closed, and let $(y_\alpha) \subseteq Y$ and $x\in X $ be such that $y_\alpha \xrightarrow{\rm {uo}} x $ in~$X$. Then $y_\alpha^\pm\xrightarrow{\rm uo}x^\pm$ in $X$ by \cite[Lemma~3.1]{GaoX:14}.
 Thus, without loss of generality, we assume $(y_\alpha)\subset Y_+$ and $x\in X_+$. Observe that
\begin{equation}
  \label{eq:domin}
  \mbox{for every }z\in X_+,\mbox{ we have }
  \abs{y_\alpha \wedge z- x\wedge z}\leq \abs{y_\alpha-x}\wedge z
  \xrightarrow{\rm {o}}0\mbox{ in }X.
\end{equation}
It now follows that, for any $y\in Y_+$, $y_\alpha \wedge y \xrightarrow{\rm {o}} x \wedge y$ in~$X$. Since $Y$ is order closed, $x \wedge y \in Y$ for any $y \in Y_+$. On the other hand, given any $0\leq z\in Y^{\mathrm{d}}$, we have $y_\alpha\wedge z=0$ for all~$\alpha$, so that~\eqref{eq:domin} yields $x\wedge z=0$. Therefore, $x\in Y^{\mathrm{dd}}$, which is the band generated by $Y$ in~$X$. It follows that there is a net $(z_\beta)$ in the ideal generated by $Y$ such that $0\le z_\beta\uparrow x$ in~$X$. Furthermore, for every $\beta$ there exists $w_\beta\in Y$ such that $z_\beta\le w_\beta$. Then $x\geq  w_\beta\wedge x\geq z_\beta\wedge x=z_\beta\uparrow x$ in~$X$, and so $w_\beta\wedge x\xrightarrow{\rm o}x$ in~$X$. Since $w_\beta\wedge x\in Y$ and $Y$ is order closed, we get $x\in Y$.
\end{proof}

\section{AL-representations}\label{sec:AL}
In general, uo-convergence is difficult to handle as it is defined via ``local'' order convergence. Difficulties occur especially when dealing with interactions of uo-convergence and a topological convergence. In \cite{GaoX:14,Gao:14}, uo-convergence was studied using AL-representations induced by certain strictly positive functionals on the space. 

Let $X$ be a vector lattice; let $x_0^*$ be a strictly positive functional on~$X$. Define $\norm{x}_L:=x_0^*(\abs{x})$ for any $x\in X$. Then $\norm{\cdot}_L$ is a norm on~$X$. Let $\widetilde{X}$ be the norm completion of $(X,\norm{\cdot}_L)$. Then $(\widetilde{X},\norm{\cdot}_L)$ is an AL-space in which $X$ sits as a norm dense sublattice.

In this section, we further discuss AL-representations. We improve some of the results in \cite[Subsection~2.2]{GaoX:14} as we now drop the order completeness condition. In particular, we show that an AL-representation preserves uo-convergence if and only if the strictly positive functional is order continuous.

\begin{theorem}\label{repre_thm}
Let $X$ be a vector lattice with a strictly positive functional~$x_0^*$.
The following four statements are equivalent.
\begin{enumerate}
\item\label{repre_thmi1} $x_0^*$ is order continuous on~$X$.
\item\label{repre_thmi2} $X$ is a regular sublattice of~$\widetilde{X}$.
\item\label{repre_thmi3} $X$ is an order dense sublattice of~$\widetilde{X}$.
\item\label{repre_thmi4} For any net $(x_\alpha)$ in~$X$, $x_\alpha \xrightarrow{\rm uo} 0$ in $X$ if and only if $x_a \xrightarrow{\rm uo} 0$ in~$\widetilde{X}$.
\end{enumerate}
If, in addition, $X$ is order complete, then (1)-(4) are equivalent to the following:
\begin{enumerate}
\item[(5)]\label{repre_thmi5} $X$ is an ideal in~$\widetilde{X}$.
\end{enumerate}
\end{theorem}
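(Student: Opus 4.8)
The plan is to prove the cycle $(1)\Rightarrow(3)\Rightarrow(2)\Rightarrow(1)$, then fold in $(4)$, and finally handle $(5)$ under the extra hypothesis. For $(1)\Rightarrow(3)$: assume $x_0^*$ is order continuous on $X$. It is a standard fact that $x_0^*$ extends to a strictly positive \emph{order continuous} functional on the AL-space $\widetilde X$ (indeed on an AL-space every positive functional is order continuous, being the integral against a finite measure after the Kakutani representation $\widetilde X\cong L_1(\mu)$, and strict positivity is inherited from density); alternatively one argues directly that, since $X$ is norm dense and the norm is the $L$-norm, for $0<a\in\widetilde X$ the positivity of $\langle x_0^*, a\rangle$ forces some $0<x\le a$ with $x\in X$. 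To get this last step cleanly, take $0<a\in\widetilde X$ and a sequence $x_n\in X$ with $\norm{x_n-a}_L\to0$; replacing $x_n$ by $\abs{x_n}\wedge a$ (which lies in $X$ only if $X$ is an ideal — so instead) one should argue in $\widetilde X\cong L_1(\mu)$ where $X$ is a norm-dense sublattice, pass to a subsequence converging $\mu$-a.e., and on the set where $a>0$ extract a nonzero piece of some $x_n$ truncated; the details are routine once one works inside $L_1(\mu)$. So $X$ is order dense in $\widetilde X$.

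The implication $(3)\Rightarrow(2)$ is immediate since order dense sublattices are regular (cited after Lemma~\ref{regular}). For $(2)\Rightarrow(1)$: suppose $X$ is regular in $\widetilde X$ and let $x_\alpha\downarrow 0$ in $X$. By regularity (Lemma~\ref{regular}(4)) we get $x_\alpha\downarrow 0$ in $\widetilde X$; since $\norm{\cdot}_L$ is an AL-norm, $\widetilde X$ is order continuous, hence $\norm{x_\alpha}_L\to 0$, i.e.\ $x_0^*(x_\alpha)\to 0$. This is exactly order continuity of $x_0^*$ on $X$. For $(2)\Leftrightarrow(4)$: this is a direct application of Theorem~\ref{uo_regular} to the sublattice $X\subseteq\widetilde X$ — $X$ is regular in $\widetilde X$ iff uo-convergence of nets in $X$ is the same computed in $X$ or in $\widetilde X$, which is precisely statement $(4)$.

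Finally, assume $X$ is order complete and prove $(2)\Leftrightarrow(5)$, i.e.\ $X$ regular in $\widetilde X$ iff $X$ is an ideal. The implication $(5)\Rightarrow(2)$ is the standard fact that every ideal is regular (noted after Lemma~\ref{regular}). For $(2)\Rightarrow(5)$: since $X$ is order complete we have $X=X^\delta$, and by $(2)\Rightarrow(3)$ already proved, $X$ is order dense in $\widetilde X$; an order complete, order dense sublattice is an ideal by \cite[Theorem~2.31]{Aliprantis:06} (the same fact invoked at the end of Section~\ref{sec:reg}). Alternatively one can quote Corollary~\ref{reg_prop2}: $X$ is regular in $\widetilde X$ and, being norm dense, is dense with respect to a convergence that dominates order convergence — but the cleanest route is simply order dense $+$ order complete $\Rightarrow$ ideal.

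The main obstacle I expect is the careful execution of $(1)\Rightarrow(3)$: extracting, from a vector $0<a\in\widetilde X$, a strictly positive element of $X$ below it. Norm density of $X$ gives approximants $x_n\to a$ in $L$-norm, but truncating them by $a$ need not stay in $X$, so one cannot simply intersect $[0,a]$ with $X$. The fix is to pass through the Kakutani representation $\widetilde X\cong L_1(\mu)$, use a.e.\ convergence of a subsequence, and build the desired element pointwise on a positive-measure piece of $\{a>0\}$; order continuity of $x_0^*$ is what guarantees $\mu$ is the right (countably additive, $\sigma$-finite on the relevant piece) measure and that the representation behaves well. Once this lemma is in hand, the rest of the cycle is short and formal.
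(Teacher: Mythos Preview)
Your implications $(3)\Rightarrow(2)$, $(2)\Rightarrow(1)$, $(2)\Leftrightarrow(4)$, and the treatment of $(5)$ are correct and essentially match the paper. The gap is in your direct attack on $(1)\Rightarrow(3)$.

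The argument you sketch---represent $\widetilde X$ as $L_1(\mu)$, approximate $0<a$ in $L_1$-norm by $x_n\in X$, pass to an a.e.-convergent subsequence, and ``extract a nonzero piece of some $x_n$ truncated''---does not go through, and you have already sensed the obstruction: truncating $x_n$ by $a$, or multiplying $x_n$ by the indicator of a measurable subset of $\{a>0\}$, need not remain in $X$. More tellingly, nothing in the sketch actually uses the hypothesis that $x_0^*$ is order continuous \emph{on $X$}. The Kakutani representation $\widetilde X\cong L_1(\mu)$ with $\mu$ countably additive holds regardless of whether (1) is assumed, so ``$\mu$ is the right measure'' cannot be where (1) enters. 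The paper in fact constructs, in the example immediately following this theorem, a norm-dense sublattice of an $L_1(\mu)$ that is \emph{not} order dense---precisely because the inducing functional fails to be order continuous---showing that the a.e.\ approximation argument alone cannot succeed.

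The paper sidesteps this entirely. It first proves $(1)\Leftrightarrow(2)$ in one stroke: for $x_\alpha\downarrow 0$ in $X$, order continuity of the AL-space $\widetilde X$ gives that $x_\alpha\downarrow 0$ in $\widetilde X$ is equivalent to $\norm{x_\alpha}_L=x_0^*(x_\alpha)\to 0$; this is your $(2)\Rightarrow(1)$ argument, but it runs in both directions. Next it observes that $X$, being norm dense in the order continuous Banach lattice $\widetilde X$, is automatically dense with respect to order convergence (a norm-convergent sequence with summable errors is order convergent there). With regularity (2) now in hand, Corollary~\ref{reg_prop2} delivers $(2)\Rightarrow(3)$ and, under order completeness of $X$, $(2)\Rightarrow(5)$, in one shot. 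You already invoke Corollary~\ref{reg_prop2} as an alternative route to $(5)$; it is in fact the intended route to $(3)$ as well, and it cleanly replaces the problematic direct step $(1)\Rightarrow(3)$.
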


\begin{proof}
The equivalence of \eqref{repre_thmi2} and \eqref{repre_thmi4} follows immediately from Theorem~\ref{uo_regular}. Let $(x_\alpha)$ be a net in $X$ such that $x_\alpha\downarrow 0$ in~$X$. Since $\widetilde{X}$ is an AL-space and is, therefore, order continuous, it follows that $x_\alpha\downarrow 0$ in $\widetilde{X}$ if and only if $x_0^*(x_\alpha)=\norm{x_\alpha}_L\rightarrow 0$. This proves the equivalence of \eqref{repre_thmi1} and \eqref{repre_thmi2}. Observe that $X$ is norm dense in $\widetilde{X}$ and thus is dense in $\widetilde{X}$ with respect to order convergence; cf.~\cite[Lemma~3.11]{GaoX:14}. The equivalence of \eqref{repre_thmi2}, \eqref{repre_thmi3} and (5) now follows immediately from Corollary~\ref{reg_prop2}.
\end{proof}

Note that the implication \eqref{repre_thmi1}$\Rightarrow$(5) in Theorem~\ref{repre_thm} is also proved in \cite[Proposition~2.4.16]{Meyer-Nieberg:91} using Nakano's Theorem \cite[Theorem~1.4.14]{Meyer-Nieberg:91}.

\begin{remark}\label{re-re}
Let $X$ be a vector lattice with a strictly positive order continuous functional~$x_0^*$. Since $\widetilde{X}$ is an AL-space, Kakutani's Representation Theorem and Theorem~\ref{repre_thm} yield that $X$ can be identified as a regular sublattice of $L_1(\mu)$ for some measure~$\mu$. The converse is also true. Namely, a vector lattice is lattice isomorphic to a regular sublattice of some $L_1(\mu)$ iff it admits strictly positive order continuous functionals. It is also easily seen that a vector lattice is lattice isomorphic to an ideal of some $L_1(\mu)$ iff it is order complete and admits strictly positive order continuous functionals. Cf.~\cite[Subsection~2.2]{GaoX:14}.
\end{remark}

\begin{remark}\label{re-l-infty}
  Let $X$ be a vector lattice with a strictly positive order continuous functional~$x_0^*$. Suppose, in addition, that $X$ has a weak unit~$x_0$. Then $x_0$ is also a weak unit of $\widetilde{X}$ because $X$ is order dense in~$\widetilde{X}$. In this case,  Kakutani's Representation Theorem guarantees that one could choose $\mu$ to be a finite measure and $x_0$ could correspond to the constant $\one$ function. Furthermore, assume, in addition, that $X$ is order complete. By Theorem~\ref{repre_thm}, we may view $X$ as an ideal in $L_1(\mu)$. By the preceding observation, $X$ contains $\one$ and, therefore, $X$ contains $L_\infty(\mu)$, so that $L_\infty(\mu)\subseteq X\subseteq L_1(\mu)$, where both inclusions represent order dense ideals.
\end{remark}

Variants of the representation $L_\infty(\mu)\subseteq X\subseteq L_1(\mu)$  have been extensively used in literature, see, e.g., \cite[Theorem~1.b.14]{Lindenstrauss:79} and
\cite[Theorem~2.2]{Weis:82}. Contrary to what is claimed in some of the literature, the following example shows that the assumption that the functional $x_0^*$ is order continuous cannot
be omitted if one wants $X$ to contain $L_\infty(\mu)$.

\begin{example}
Let $X=\ell_\infty$. Clearly, $X$ is order complete and has a weak (even a strong) unit. Fix a free ultrafilter $\mathcal{U}$ on~$\mathbb{N}$. By \cite[Lemma~1.59(4)]{Abramovich:02}, $\lim_{\mathcal{U}}x_n$ exists for any $x=(x_n)\in X$. We denote the limit by $y^*(x)$. By \cite[Lemma~1.60]{Abramovich:02}, $y^*$ is a linear functional on~$X$. It is also easily seen from \cite[Definition~1.58]{Abramovich:02} that $y^*$ is a lattice homomorphism on~$X$; in particular, it is positive.
Put $z^*(x)=\sum_{n=1}^\infty\frac{x_n}{2^n}$, and let
$x_0^*=y^*+z^*$. Since $z^*$ is strictly positive, so is~$x_0^*$.

Let $\widetilde{X}$ be the $L_1$-representation for $X$ and~$x_0^*$.
We claim that $\widetilde{X}$ may be identified with
$L_1(\Omega,\mu)$ where $\Omega=\mathbb N\cup\{\infty\}$ and $\mu$ is
defined by $\mu\bigl(\{n\}\bigr)=2^{-n}$ for every $n\in\mathbb N$ and
$\mu\bigl(\{\infty\}\bigr)=1$. Indeed, let
\begin{displaymath}
  X_0=\Bigl\{f\in L_\infty(\mu)\mid
      f(\infty)=y^*(x)\text{ where }x=\bigl(f(n)\bigr)_{n\in\mathbb N}\Bigr\}.
\end{displaymath}
Consider the map $T\colon X\to X_0$ given by $(Tx)(n)=x_n$ and $(Tx)(\infty)=y^*(x)$. Clearly, $T$ is a linear bijection.
Since $y^*$ is a lattice homomorphism, $X_0$ is a sublattice of $L_1(\mu)$ and $T$ is a lattice isomorphism.

Note that $x_0^*\bigl(\abs{x}\bigr)=\norm{Tx}_{L_1}$ for every $x\in X$. Thus, in order to prove that $\widetilde{X}$ is lattice isometric to $L_1(\mu)$, it is left to show that $X_0$ is norm dense in $L_1(\mu)$.
Let $u\in L_1(\mu)$ be the characteristic function of
$\{\infty\}$. Then $u\in\overline{X_0}$. Indeed, for every~$n$, define $f_n\in X_0$ as follows: $f_n(k)=0$ when $k<n$, $f_n(k)=1$ when
$k\ge n$, and $f_n(\infty)=1$. Then $f_n\to u$ in $L_1(\mu)$, so that $u\in\overline{X_0}$. Now for every $f\in L_1(\mu)$, we have $f=g+\lambda u$ for some $g\in X_0$ which agrees with $f$ on $\mathbb N$ and some appropriate $\lambda\in\mathbb R$. It follows that $f\in\overline{X_0}$. Thus, $X_0$ is dense in $L_1(\mu)$. 

We have thus proved that $\widetilde{X}$ can be identified with $L_1(\mu)$; with $X$ corresponding to~$X_0$. Now note that $u\in L_\infty(\mu)$, yet $u\notin X_0$, hence $L_\infty(\mu)$ is not contained in~$X_0$.

Note also that if $0\le g\le u$ for some $g\in X_0$ then $g=0$. It follows that $X_0$ is not order dense in $L_1(\mu)$. Theorem~\ref{repre_thm} and the remarks after it do not apply here because $y^*$ (and, therefore, $x_0^*$) is not order continuous, which can easily be verified directly.
\end{example}

We will repeatedly use the following standard fact; see e.g.,  \cite[Proposition~1.b.15]{Lindenstrauss:79}; see also \cite[Theorem 3]{Moore:71}.

\begin{proposition}[\cite{Lindenstrauss:79}]\label{oc-wu-str-pos}
  Every order continuous Banach lattice with a weak unit admits a strictly positive functional.
\end{proposition}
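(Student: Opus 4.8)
The plan is to build the strictly positive functional as a norm-convergent series $\psi=\sum_{k}2^{-k}\varphi_{k}$ of positive functionals whose carriers exhaust~$X$. First I would record the elementary fact that for every $0<x\in X$ there is some $\varphi\in X^{*}_{+}$ with $\varphi(x)>0$: Hahn--Banach gives $\varphi_{0}\in X^{*}$ with $\varphi_{0}(x)\neq 0$, and then $\abs{\varphi_{0}}\in X^{*}_{+}$, computed in the Banach lattice $X^{*}$, satisfies $\abs{\varphi_{0}}(x)\geq\abs{\varphi_{0}(x)}>0$ since $x\geq 0$. Next I would use order continuity of the norm: every functional on $X$ is then order continuous, so for $\varphi\in X^{*}_{+}$ the null ideal $N_{\varphi}=\{x:\varphi(\abs x)=0\}$ is a band; and since an order continuous Banach lattice is Dedekind complete (see \cite{Aliprantis:06}), $N_{\varphi}$ is a projection band, with carrier $C_{\varphi}:=N_{\varphi}^{\mathrm d}$ and band projection $P_{C_{\varphi}}$.

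The core of the argument is a descent from these (possibly uncountably many) carriers to a single functional. From $N_{\varphi_{1}+\varphi_{2}}=N_{\varphi_{1}}\cap N_{\varphi_{2}}$ one gets $C_{\varphi_{1}+\varphi_{2}}=C_{\varphi_{1}}\vee C_{\varphi_{2}}$, so the family $\{C_{\varphi}:\varphi\in X^{*}_{+}\}$ is upward directed, and hence so is the net $\bigl(P_{C_{\varphi}}e\bigr)$ in the interval $[0,e]$, where $e$ is the given weak unit. The first fact forces $\bigvee_{\varphi}C_{\varphi}=X$: if $B:=\bigvee_{\varphi}C_{\varphi}$ were proper, then, $X$ being Dedekind complete, $B^{\mathrm d}\neq\{0\}$ would contain some $x>0$, and picking $\varphi$ with $\varphi(x)>0$ we would have $x\perp C_{\varphi}$, i.e.\ $x\in C_{\varphi}^{\mathrm d}=N_{\varphi}$, contradicting $\varphi(x)>0$. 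A short computation with the order-continuous band projections, using this equality once more, then gives $\sup_{\varphi}P_{C_{\varphi}}e=e$. Order continuity of the norm upgrades the order convergence $P_{C_{\varphi}}e\uparrow e$ to norm convergence, so I can select a sequence $(\varphi_{k})$ in $X^{*}_{+}$, rescaled so that $\norm{\varphi_{k}}\leq 1$, with $P_{C_{\varphi_{k}}}e\to e$ in norm.

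Finally I would put $\psi=\sum_{k}2^{-k}\varphi_{k}\in X^{*}_{+}$. Since all terms are positive, $N_{\psi}=\bigcap_{k}N_{\varphi_{k}}$, whence $C_{\varphi_{k}}\subseteq C_{\psi}$ for every $k$, and in particular each $P_{C_{\varphi_{k}}}e$ lies in $C_{\psi}$. As $C_{\psi}$ is a band and bands in a Banach lattice are norm closed, $e=\lim_{k}P_{C_{\varphi_{k}}}e\in C_{\psi}$. But $e$ is a weak unit, so the band generated by $e$ is all of $X$; therefore $C_{\psi}=X$, i.e.\ $N_{\psi}=\{0\}$, which says precisely that $\psi$ is strictly positive.

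The step I expect to be the main obstacle is the descent: converting the ``global'' identity $\bigvee_{\varphi}C_{\varphi}=X$ into information about a single countable combination of functionals. This is exactly where both hypotheses enter essentially --- the weak unit supplies one test vector $e$ whose band is all of $X$, and order continuity of the norm turns $P_{C_{\varphi}}e\uparrow e$ into norm convergence, which is what allows one to extract a sequence and form $\psi$. Two standard auxiliary facts need to be in place for this to run smoothly: that order continuous Banach lattices are Dedekind complete (so the carriers are genuine projection bands) and that bands in Banach lattices are norm closed (so that $e$ actually lands in $C_{\psi}$).
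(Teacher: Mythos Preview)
The paper does not give its own proof of this proposition: it is stated as a standard fact and simply attributed to \cite[Proposition~1.b.15]{Lindenstrauss:79} (with a cross-reference to \cite[Theorem~3]{Moore:71}). So there is nothing to compare against on the paper's side.

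Your argument is correct and is essentially one of the classical proofs. The ingredients you isolate are exactly the right ones: separation of points by $X^{*}_{+}$ via Hahn--Banach and the lattice structure of $X^{*}$; order continuity of the norm to make every null ideal $N_{\varphi}$ a projection band and to convert $P_{C_{\varphi}}e\uparrow e$ into norm convergence; and the weak unit $e$ to collapse the global statement $\bigvee_{\varphi}C_{\varphi}=X$ to a countable one. The final step --- passing from $P_{C_{\varphi_{k}}}e\to e$ and norm-closedness of the band $C_{\psi}$ to $e\in C_{\psi}$ and hence $C_{\psi}=X$ --- is clean. One minor point of presentation: when you speak of the ``net $(P_{C_{\varphi}}e)$'' you are implicitly directing $X^{*}_{+}$ by inclusion of carriers (equivalently, by the preorder $\varphi\preceq\varphi'$ iff $C_{\varphi}\subseteq C_{\varphi'}$); it is worth saying this explicitly so that ``upward directed'' and ``$\uparrow e$'' are unambiguous.
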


\begin{remark}\label{uo-ae}
  Let $X$ be a Banach lattice such that every principal band in $X$
  admits a strictly positive order continuous functional. (By
  Proposition~\ref{oc-wu-str-pos}, this is satisfied for order
  continuous Banach lattices.)  Take any sequence $(x_n)$ in~$X$. Let
  $B$ be a principal band containing $(x_n)$. For example, one can
  take $B=B_{x_0}$ where
  $x_0=\sum_{n=1}^\infty\frac{\abs{x_n}}{2^n\norm{x_n}}$. By
  assumption, $B$ admits a strictly positive order continuous
  functional~$x_0^*$. Let $L_1(\mu)$ be an AL-representation for
  $(B,x_0^*)$. Since $B$ has a weak unit, we may chose $\mu$ to be a
  probability measure (by scaling $x_0^*$). Combining
  Theorem~\ref{repre_thm}\eqref{repre_thmi4} with
  Remark~\ref{uo-kothe}, we get $x_n\xrightarrow{\rm uo}0$ in $X$ iff
  $x_n\xrightarrow{\rm uo}0$ in $B$ iff $x_n\xrightarrow{\rm uo}0$ in
  $L_1(\mu)$ iff $x_n\xrightarrow{\rm a.e.}0$ in
  $L_1(\mu)$. Similarly, $(x_n)$ is uo-Cauchy in $X$
  iff $(x_n)$ converges a.e.~to some measurable function.
\end{remark}

The following proposition is an application of this technique.

\begin{proposition}\label{w-uo}
  Let $X$ be a Banach lattice such that every principal band admits a strictly positive order continuous functional.
If $0\le x_n\xrightarrow{\rm w}0$ then $x_{n_k}\xrightarrow{\rm uo}0$ for some subsequence $(x_{n_k})$.
\end{proposition}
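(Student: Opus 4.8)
The plan is to transfer the problem into an $L_1(\mu)$-representation, where a positive weakly null sequence is automatically norm null and hence has an a.e.\ convergent subsequence. Discarding any zero terms, I would first note that, since $(x_n)$ is weakly null and hence norm bounded, $x_0:=\sum_{n=1}^\infty\frac{x_n}{2^n\norm{x_n}}$ defines a positive element of~$X$, and every $x_n$ lies in the principal band $B:=B_{x_0}$ (as $0\le x_n\le 2^n\norm{x_n}x_0$); moreover $x_0$ is a weak unit of~$B$. By hypothesis, $B$ admits a strictly positive order continuous functional~$x_0^*$. Following Section~\ref{sec:AL} and Remark~\ref{uo-ae}, I would form the AL-norm $\norm{x}_L=x_0^*(\abs{x})$ and identify $B$ with an order dense sublattice of some $L_1(\mu)$ in which $\norm{\cdot}_L$ is the $L_1$-norm.

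The crucial observation is that on this band weak nullity is much stronger than it appears. Being a positive functional on the Banach lattice~$B$, $x_0^*$ is norm continuous; and since a band is norm closed, $B$ is a norm closed sublattice of~$X$, so the weakly null sequence $(x_n)$ is also weakly null in~$B$. Hence $x_0^*(x_n)\to0$, and because each $x_n\ge0$ this reads $\norm{x_n}_L=x_0^*(\abs{x_n})\to0$; that is, $x_n\to0$ in the norm of $L_1(\mu)$.

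To finish, I would pass to a subsequence $(x_{n_k})$ with $\sum_k\norm{x_{n_k}}_L<\infty$. Then $\sum_k\abs{x_{n_k}}\in L_1(\mu)$, so this function is $\mu$-a.e.\ finite and therefore $x_{n_k}\to0$ $\mu$-a.e. By Remark~\ref{uo-ae}, applied with the band~$B$ (which contains all $x_{n_k}$), a.e.\ convergence to zero in the $L_1(\mu)$-representation is equivalent to uo-convergence to zero in~$X$; hence $x_{n_k}\xrightarrow{\rm uo}0$ in~$X$, as required.

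The step I expect to carry the weight is the reduction of weak nullity to $\norm{\cdot}_L$-nullity: it relies on the combination of three facts — the automatic norm boundedness of the positive functional $x_0^*$, the positivity of the $x_n$, and the norm closedness of the band~$B$ — which together collapse an a priori merely weak hypothesis down to a norm statement in $L_1(\mu)$. Once that is in place, the remaining arguments are routine applications of the AL-representation machinery of Section~\ref{sec:AL} and of Remark~\ref{uo-ae}.
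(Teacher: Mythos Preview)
Your proof is correct and follows essentially the same route as the paper: reduce to a principal band~$B$ with weak unit and strictly positive order continuous functional~$x_0^*$, use $x_0^*(x_n)\to 0$ together with $x_n\ge 0$ to get $\norm{x_n}_L\to 0$ in the $L_1(\mu)$-representation, extract an a.e.\ null subsequence, and invoke Remark~\ref{uo-ae}. The paper's proof is merely a terser version of what you wrote, omitting the explicit justification of why weak nullity passes to~$B$ and citing the standard fact that $L_1$-null sequences have a.e.\ null subsequences without spelling out the summability trick.
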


\begin{proof}
Let $x_0^*$ and $L_1(\mu)$ be as in Remark~\ref{uo-ae}. It follows from $x_0^*(x_n)\to 0$ that $(x_n)$ converges to zero in norm in $L_1(\mu)$. Then there is a subsequence $(x_{n_k})$ such that $x_{n_k}\xrightarrow{\rm a.e.}0$. It follows that $x_{n_k}\xrightarrow{\rm uo}0$ in~$X$.
\end{proof}

The following is a special case of Corollary~\ref{uo-Cesaro-uo0}, but the proof is now much simpler.

\begin{example}\label{uo-Cesaro-uo}
Let $X$ be a Banach lattice in which every principal band admits a strictly positive order continuous functional. If $(x_n)$ is uo-null (respectively,~uo-Cauchy), then so are its Ces\`aro means. Indeed, by Remark~\ref{uo-ae}, it suffices to observe that the statement is true for a.e.\ convergence in $L_1(\mu)$.
\end{example}

\medskip

There is also a way of representing Banach lattices as $L_1$-spaces of functions which are integrable with respect to a \emph{vector measure}.
One can thus define almost everywhere convergence of sequences in the lattice with respect to the associated vector measures. We claim that this so-defined almost everywhere convergence is also equal to uo-convergence and is thus independent of the choice of vector measure.

A systematic study of vector measures on $\delta$-rings and integration over such vector measures can be found in \cite{Masani:89a,Masani:89b};
in particular, we refer to \cite[Section~2]{Masani:89a} for basic definitions and properties. Let $\mathcal{R}$ be a $\delta$-ring of subsets of $\Omega$ and $\nu\colon\mathcal{R}\to Y$ be a vector measure, where $Y$ is a real Banach space. Let $\mathcal{R}^{loc}$ be the $\sigma$-algebra of all sets $B$ such that $B\cap A\in\mathcal{R}$ for every $A\in\mathcal{R}$. The variation of $\nu$ is the countably additive measure $\abs{\nu}\colon\mathcal{R}^{loc}\to [0,\infty]$ defined by
\begin{displaymath}
  \abs{\nu}(A)=\sup\Bigl\{\sum_{i=1}^n\bignorm{\nu(A_i)}\mid(A_i)_1^n\text{ is a disjoint sequence in }\mathcal{R}\cap 2^{A}\Bigr\}.
\end{displaymath}
A $\nu$-null set is a set $A$ in $\mathcal{R}^{loc}$ such that $\abs{\nu}(A)=0$, or equivalently, $\nu(B)=0$ for any subset $B$ of $A$ that is contained in~$\mathcal{R}$.

Let $L_0(\nu)$ be the vector lattice of all $\mathcal{R}^{loc}$-measurable real functions (modulo $\nu$-a.e.~equality), endowed with the order: $f\geq g$ iff $f(t)\geq g(t)$ except on a $\nu$-null set. That is, $L_0(\nu)=L_0\bigl(\abs{\nu}\bigr)$. Let $L_1^w(\nu)$ be the Banach lattice of all $f$ in $L_0(\nu)$ such that
\begin{displaymath}
  \norm{f}:=\sup_{y^*\in B_{Y^*}}\int \abs{f}\,d\bigabs{y^*\nu}<\infty;  
\end{displaymath}
here $\abs{y^*\nu}\colon\mathcal{R}^{loc}\to[0,\infty)$ is the variation of $y^*\nu\colon\mathcal{R}\to\mathbb{R}$.
Given $f\in L_1^w(\nu)$, we say that $f$ is $\nu$-integrable and write $f\in L_1(\nu)$ if for every $A\in\mathcal{R}^{loc}$ there exists a vector in~$Y$, denoted $\int_Af\,d\nu$, such that
\begin{displaymath}
  y^*\Bigl(\int_Af\,d\nu\Bigr)=\int_Af\,dy^*\nu
  \quad\text{for all }y^*\in Y^*.
\end{displaymath}
Theorem~2.1.2 in~\cite{Juan:11} asserts that $L_1(\nu)$ is the order continuous part of $L_1^w(\nu)$.
We refer to \cite{Juan:11} for basic properties of $L_1(\nu)$ and $L_1^w(\nu)$.

It was proved in \cite{Curbera:90,Curbera:92} that an order continuous Banach lattice $X$ with a weak unit is lattice isometric to $L_1(\nu)$ for a vector measure $\nu$ defined on a $\sigma$-algebra. It was later extended to spaces without a weak unit in~\cite{Delgado:12,Juan:11}; in this case, one has to consider a vector measure defined on a $\delta$-ring instead of a $\sigma$-algebra. Namely, a Banach lattice $X$ is order continuous iff it is lattice isometric to $L_1(\nu)$ for a vector measure $\nu$ on a $\delta$-ring.
Given such a space $X$ represented as $L_1(\nu)$ and a sequence $(x_n)$ in~$X$, we say that the sequence $\nu$-almost everywhere converges to a function $x$ if $x_n(t)\to x(t)$ except on a $\nu$-null set. Note that $x$ need not be an element of~$X$.

Our aforementioned claim is verified by the following proposition and the subsequent paragraph.

\begin{proposition}\label{uo_and_ae}
Let $Y$ be a Banach space, $\mathcal{R}$ be a $\delta$-ring of sets of $\Omega$ and $\nu\colon\mathcal{R}\to Y$ be a vector measure. Let $X$ be a regular sublattice of $L_0(\nu)$ and $(x_n)$ be a sequence in~$X$. Then $x_n \xrightarrow{\nu-\rm{a.e.}} 0$ iff $x_n \xrightarrow{\rm uo} 0$ in~$X$, and $(x_n )$ converges $\nu$-a.e. if and only if $(x_n) $ is uo-Cauchy in~$X$.
\end{proposition}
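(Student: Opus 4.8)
The plan is to reduce the statement to Proposition~\ref{L0} and Theorem~\ref{uo_regular} via the identification $L_0(\nu)=L_0\bigl(\abs{\nu}\bigr)$, so that the proposition becomes an instance of the already-established transfer principle for regular sublattices of $L_0$-spaces.

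First I would record the setup. By definition, $L_0(\nu)=L_0\bigl(\abs{\nu}\bigr)$, and a set is $\nu$-null precisely when it is $\abs{\nu}$-null; hence $\nu$-a.e.\ convergence of a sequence coincides with its $\abs{\nu}$-a.e.\ convergence. Since $\abs{\nu}$ is a countably additive (possibly infinite) measure on the $\sigma$-algebra $\mathcal{R}^{loc}$, the triple $\bigl(\Omega,\mathcal{R}^{loc},\abs{\nu}\bigr)$ is a measure space in the sense of Section~\ref{uo-sec}, and Proposition~\ref{L0} applies verbatim to $L_0(\nu)$: for a sequence $(x_n)$ in $L_0(\nu)$, being uo-convergent, uo-Cauchy, $\nu$-a.e.\ convergent, order convergent, and order Cauchy are all equivalent, and the uo-, a.e.-, and order-limits agree when they exist.

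Next I would invoke regularity. Since $X$ is a regular sublattice of $L_0(\nu)$, Theorem~\ref{uo_regular} gives, for every sequence $(x_n)$ in $X$, that $x_n\xrightarrow{\rm uo}0$ in $X$ iff $x_n\xrightarrow{\rm uo}0$ in $L_0(\nu)$; applying the same theorem to the double net $(x_n-x_m)_{(n,m)}$, which is again a net in the linear sublattice $X$, shows that $(x_n)$ is uo-Cauchy in $X$ iff it is uo-Cauchy in $L_0(\nu)$. Combining these two equivalences with those of the previous paragraph — this is exactly the content of Remark~\ref{uo-kothe}, now read with $\mu=\abs{\nu}$ — we obtain $x_n\xrightarrow{\rm uo}0$ in $X$ iff $x_n\xrightarrow{\nu-\rm{a.e.}}0$, and $(x_n)$ is uo-Cauchy in $X$ iff $(x_n)$ converges $\nu$-a.e., which is the assertion.

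Since all the real work was carried out in Sections~\ref{sec:reg} and~\ref{uo-sec}, there is no genuine obstacle; the only point deserving a word of care is that Proposition~\ref{L0}, and hence Remark~\ref{uo-kothe}, was established for an \emph{arbitrary} measure space, with no $\sigma$-finiteness assumed — the sole structural input being the $\sigma$-order completeness of $L_0(\mu)$, which holds for any measure because a countable union of null sets is null. Thus the fact that $\abs{\nu}$ may be non-$\sigma$-finite and $[0,\infty]$-valued causes no difficulty.
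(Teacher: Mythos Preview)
Your proof is correct and follows precisely the paper's own route: reduce via Theorem~\ref{uo_regular} to $X=L_0(\nu)$, identify $L_0(\nu)=L_0\bigl(\abs{\nu}\bigr)$, and invoke Proposition~\ref{L0}. You have simply spelled out the details (the double-net argument for the uo-Cauchy clause and the observation that no $\sigma$-finiteness is needed) that the paper leaves implicit.
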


\begin{proof}
In view of Theorem~\ref{uo_regular}, we may assume $X=L_0(\nu)$. As a vector lattice, $L_0(\nu)$ is nothing but $L_0(\abs{\nu})$. Now apply Proposition~\ref{L0}.
\end{proof}

This proposition applies to $L_1^w(\nu)$ and $L_1(\nu)$ because they both are ideals of $L_0(\nu)$. This result also shows that the concept of a.e.\ convergence in $X$ is independent of a specific representation of $X$ as $L_1(\nu)$.

\section{Koml\'os properties}\label{sec:komlos}

The property described in Koml\'os' Theorem~\ref{kom} has been extensively studied by various authors (see, for example, \cite{Aldous:76,Bozorgnia:79,Halevy:79,Garling:79,Lennard:93,Cembranos:94,Day:10,Jimenes:11}), mainly due to its numerous applications in many areas of mathematics, including probability theory, function theory, and mathematical economics. In this section, we study the property for general Banach lattices. We show that this property may be extended from $L_1(\mathbb P)$ to a very large class of Banach lattices; in particular, to order continuous Banach lattices. The version (Theorem~\ref{pre-komlos}) we establish here is intrinsic and measure-free due to the use of uo-convergence. As will be seen, it also covers and unifies the main results of \cite{Day:10,Jimenes:11}.

\begin{definition}
A Banach lattice $X$ is said to have the \term{Koml\'os property} if for every norm bounded sequence $(x_n)$ in $X$ there exists a subsequence $(y_n)$ and a vector $y$ in $X$ such that the Ces\`aro means of every subsequence of $(y_n)$ uo-converge to~$y$. More generally, we say that $X$ has the \term{pre-Koml\'os property} if every norm bounded sequence $(x_n)$ in $X$ admits a subsequence $(y_n)$ such that the Ces\`aro means of any subsequence of $(y_n)$ are uo-Cauchy in~$X$.
\end{definition}

\begin{example}\label{unkom}
\emph{A Banach lattice which fails the pre-Koml\'os property.}
Let $X=\ell_\infty(\Gamma)$, where $\Gamma$ is the collection of all sequences in~$\mathbb{N}$. Applying Remark~\ref{uo-kothe} to the counting measure on~$\Gamma$, we see that a sequence in $\ell_\infty(\Gamma)$ is uo-Cauchy if and only if it is convergent coordinatewise. Take a sequence $(a_k)$ in $[-1,1]$ such that its Ces\`aro means are divergent in~$\mathbb{R}$. For any~$n$, define $x_n\in \ell_\infty(\Gamma)$ as follows: given $\gamma=(n_k)\in \Gamma$, put $x_n(\gamma)=0$ if $n\not\in\gamma$ and $x_n(\gamma)=a_k$ if $n=n_k\in \gamma$. Now for any subsequence $(x_{n_k})$ of $(x_n)$, the Ces\`aro means of $(x_{n_k})$ at the coordinate $\gamma=(n_k)$ are the same as the Ces\`aro means of $(a_k)$, and hence diverge in~$\mathbb{R}$. It follows that the sequence of the Ces\`aro means of $(x_{n_k})$ is not uo-Cauchy in $\ell_\infty(\Gamma)$.
\end{example}

\begin{example}\label{CK-Komlos}
  \emph{$C[0,1]$ fails the Koml\'os property.} Indeed, for each~$n$, define $f_n\in C[0,1]$ so that $f_n$ equals one on $[0,\frac12]$, vanishes on $[\frac12+\frac1n,1]$, and is linear on $[\frac12,\frac12+\frac1n]$. It is easy to see that Ces\`aro means of every subsequence of $(f_n)$ decrease and converge pointwise to the characteristic function of $[0,\frac12]$ and, therefore, neither converge in order nor uo-converge. Note that, however, the Ces\`aro means of every subsequence of $(f_n)$ are uo-Cauchy.
\end{example}

We do not know whether $C[0,1]$ has the  pre-Koml\'os property, or more generally, when $C(K)$ has the pre-Koml\'os property.

The Koml\'os property clearly implies the pre-Koml\'os property but the reverse implication fails in general.

\begin{example}\label{sep}
  \emph{$c_0$ has the pre-Koml\'os property but fails the Koml\'os property}. Indeed, let $(x_n)$ be a norm bounded sequence in~$c_0$. A standard diagonal process yields a subsequence $(y_n)$ of $(x_n)$ which is coordinatewise convergent. The Ces\'aro means of any subsequence of $(y_n)$ are coordinatewise convergent and hence are uo-Cauchy by Remark~\ref{uo-kothe} (applied to the counting measure on $\mathbb N$). Consequently, $c_0$ has the pre-Koml\'os property. Now let $(e_n)$ be the standard basis of~$c_0$, and put $f_n=\sum_{i=1}^ne_i$. Clearly, the Ces\`aro means of any subsequence of $(f_n)$ converge coordinatewise to $(1,1,1,\dots)$. Since uo-convergence is the same as coordinatewise convergence in~$c_0$, it follows that the Ces\`aro means of no subsequence of $(f_n)$ are uo-convergent in~$c_0$. Hence, $c_0$ fails the Koml\'os property.
\end{example}

A Banach lattice is said to be \term{boundedly uo-complete} (respectively, \term{sequentially boundedly uo-complete})
if every norm bounded uo-Cauchy net (respectively, sequence) is uo-convergent.
We will use the following two facts.

\begin{theorem}\cite[Theorem~4.7]{GaoX:14}
  \label{uo-compl}
  An order continuous Banach lattice $X$ is a
  KB-space iff it is (sequentially) boundedly uo-complete. 
\end{theorem}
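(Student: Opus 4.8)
\textbf{Proof proposal for Theorem~\ref{uo-compl} (characterization of KB-spaces via bounded uo-completeness).}

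The plan is to prove both implications directly, passing to an $L_1(\mu)$-type representation where uo-convergence becomes a.e.\ convergence. First recall that in an order continuous Banach lattice $X$ one has $x_\alpha\xrightarrow{\rm o}0$ implies $\norm{x_\alpha}\to 0$, and that a KB-space is precisely an order continuous Banach lattice which does not contain a copy of $c_0$, equivalently one in which every increasing norm bounded positive sequence is norm (equivalently order) convergent. For the easy direction, suppose $X$ is a KB-space. Since a KB-space has order continuous norm and contains no copy of $c_0$, it is in particular order complete. Let $(x_\alpha)$ be a norm bounded uo-Cauchy net; I would first reduce to a sequence. Actually, since the statement of Theorem~\ref{uo-compl} allows either ``boundedly uo-complete'' or ``sequentially boundedly uo-complete'', it suffices to treat the net case for one direction and the sequence case for the other; the cleanest route is to prove: (KB) $\Rightarrow$ (boundedly uo-complete), and (sequentially boundedly uo-complete) $\Rightarrow$ (KB).

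For (KB) $\Rightarrow$ (boundedly uo-complete): let $(x_\alpha)$ be a norm bounded uo-Cauchy net in the KB-space $X$, with $\norm{x_\alpha}\le M$. We must produce a uo-limit. Decomposing into positive and negative parts (uo-Cauchy is preserved by $x_\alpha\mapsto x_\alpha^\pm$ via \cite[Lemma~3.1]{GaoX:14}, and norm boundedness too), we may assume $x_\alpha\ge 0$. The band generated by a cofinal ``tail'' argument is awkward for nets, so instead I would work in $X$ directly: since $X$ is order complete, set $u_\beta=\sup\{\,|x_\alpha-x_{\alpha'}|\wedge w : \alpha,\alpha'\ge\beta\,\}$ for a fixed $w\in X_+$; uo-Cauchyness gives $u_\beta\downarrow 0$ for each $w$. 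The candidate limit is obtained as follows: the net $(x_\alpha\wedge ne)$ — where $e$ is a suitable element — is order bounded and order Cauchy, hence order convergent in the order complete space $X$; taking a supremum over $n$ and using order continuity plus norm boundedness ($\norm{x_\alpha}\le M$) together with the absence of $c_0$ (the KB property) to conclude that the increasing limits stay bounded and converge to an element $x\in X$. Then one checks $x_\alpha\xrightarrow{\rm uo}x$ by verifying $|x_\alpha-x|\wedge w\xrightarrow{\rm o}0$ for every $w\in X_+$, which follows by combining $u_\beta\downarrow 0$ with the construction of $x$.

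For (sequentially boundedly uo-complete) $\Rightarrow$ (KB): Suppose $X$ is sequentially boundedly uo-complete but not a KB-space. Two cases. If $X$ is not order continuous, then $X$ contains a norm bounded disjoint positive sequence $(x_n)$ bounded away from $0$ in norm, hence not norm-null; but by Corollary~\ref{dis}, $(x_n)$ is uo-null, so in particular uo-Cauchy, and norm bounded, so by hypothesis it has a uo-limit, which must be $0$ (uo-limits being unique). Its Ces\`aro means are uo-null by Corollary~\ref{uo-Cesaro-uo0}, but this alone does not yet contradict anything — so instead I would argue: a sequentially boundedly uo-complete Banach lattice must be order continuous, because otherwise $c_0$ embeds as a sublattice via a disjoint sequence $(x_n)$ with $\sum x_{n_k}$ failing to converge, and one builds a norm bounded uo-Cauchy (indeed uo-null, being disjoint) sequence whose ``uo-limit'' would have to be $0$ yet the partial-sum sequence $f_n=\sum_{i=1}^n x_i$ is norm bounded (by the $c_0$-structure), uo-Cauchy since it converges coordinatewise, but its uo-limit $\sup_n f_n$ does not exist in $X$ — contradiction. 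Hence $X$ is order continuous. If moreover $X$ contains $c_0$, the same partial-sum construction gives a norm bounded uo-Cauchy sequence with no uo-limit, again a contradiction. Therefore $X$ is order continuous and $c_0$-free, i.e.\ a KB-space.

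\emph{Main obstacle.} The delicate point is the construction of the actual uo-limit in the forward direction: producing $x\in X$ (not merely in $X^\delta$ or $X^u$) and verifying the local order convergence $|x_\alpha-x|\wedge w\xrightarrow{\rm o}0$. The natural candidate lives a priori only in the universal completion $X^u$ (by Corollary~\ref{uo-ucompl}, $(x_\alpha)$ — if sequential — is order convergent in $X^u$), and one must use norm boundedness together with the KB property to pull the limit back into $X$: concretely, if $x_n\xrightarrow{\rm o}x$ in $X^u$ with $\sup_n\norm{x_n}<\infty$, then writing $x$ as an increasing limit of elements of $X$ and invoking that a KB-space is exactly where norm bounded increasing nets converge, one gets $x\in X$. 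For the genuinely net (non-sequential) case one cannot invoke Corollary~\ref{uo-ucompl} directly, so the band-and-order-completeness argument sketched above is needed, and making the order-convergence estimate uniform over all $w\in X_+$ is the technical heart of the proof.
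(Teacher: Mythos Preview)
The paper does not give its own proof of this statement; it is quoted from \cite[Theorem~4.7]{GaoX:14} and used as a black box, so there is nothing in the paper to compare your proposal against.

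On the proposal itself: you have misread the hypothesis. The theorem assumes $X$ is \emph{already} order continuous, so your entire case ``If $X$ is not order continuous'' in the reverse direction is irrelevant (and the claim you attempt there --- that sequential bounded uo-completeness alone forces order continuity --- is not what the theorem asserts and would require its own justification). Once you drop that digression, your remaining argument for (sequentially boundedly uo-complete)~$\Rightarrow$~(KB) is correct: if $X$ is order continuous but not KB, it contains a lattice copy of~$c_0$, and the partial sums $f_n=\sum_{i=1}^n e_i$ form a norm bounded uo-Cauchy sequence with no uo-limit in~$X$, exactly as in Example~\ref{sep}.

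For the forward direction your sketch is not a proof. The phrase ``$(x_\alpha\wedge ne)$ --- where $e$ is a suitable element'' names no specific $e$ and gives no verification, and your fallback through $X^u$ via Corollary~\ref{uo-ucompl} applies only to sequences. For sequences that route can be made to work: with $x_n\ge 0$ and $x_n\xrightarrow{\rm o}x$ in~$X^u$, the vectors $y_n=\inf_{k\ge n}x_k$ (computed in the order complete~$X$) satisfy $0\le y_n\uparrow$ and $\norm{y_n}\le M$, hence $y_n\uparrow y$ in $X$ by the KB property; regularity of $X$ in $X^u$ gives $y=x\in X$, and then $x_n\xrightarrow{\rm uo}x$ in $X$ by Theorem~\ref{uo_regular}. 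For the genuine net case you correctly flag the difficulty but do not resolve it; the argument in \cite{GaoX:14} handles this, and your outline does not.
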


\begin{theorem}\cite[Theorem~2.2]{Gao:14}
  \label{uo-compl-dual}
 The dual space of an order continuous Banach lattice is boundedly uo-complete.
\end{theorem}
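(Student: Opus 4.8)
The plan is to produce the uo-limit by a weak$^*$-compactness argument and then to verify uo-convergence using the uo-Cauchy hypothesis. Write $Z=X^*$ and let $(f_\alpha)$ be a uo-Cauchy net in $Z$ with $\norm{f_\alpha}\le M$ for all $\alpha$. I would use three standing facts about the dual of an order continuous Banach lattice $X$: (a) $Z$ is Dedekind complete (so every band of $Z$ is a projection band); (b) every order interval $[-v,v]$ of $Z$ is weak$^*$-closed, being an intersection of weak$^*$-closed half-spaces; and (c) under the canonical embedding $X$ is an ideal of $X^{**}$ --- this is where order continuity of $X$ enters structurally.

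\textbf{Step 1: the limit exists.} The key auxiliary fact is that \emph{a norm bounded uo-null net in $Z$ is weak$^*$-null} when $X$ is order continuous. Granting this and applying it to the double net $(f_\alpha-f_\beta)$ shows that $(f_\alpha)$ is weak$^*$-Cauchy; since the tails of $(f_\alpha)$ lie in the weak$^*$-compact ball $MB_Z$ (Banach--Alaoglu) and a weak$^*$-Cauchy net inside a weak$^*$-compact set converges, we obtain $f_\alpha\xrightarrow{w^*}f$ for some $f\in Z$ with $\norm f\le M$.

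\textbf{Step 2: upgrading to uo-convergence.} Fix $v\in Z_+$. Since $(f_\alpha)$ is uo-Cauchy, $\abs{f_\alpha-f_\beta}\wedge v\xrightarrow{\rm o}0$ as $(\alpha,\beta)\to\infty$, so there is a net $a_\gamma\downarrow 0$ in $Z$ with $0\le a_\gamma\le v$ and $\abs{f_\alpha-f_\beta}\wedge v\le a_\gamma$ for $\alpha,\beta\ge\alpha_0(\gamma)$. Fix $\gamma$ and $\alpha\ge\alpha_0(\gamma)$; I claim
\begin{displaymath}
  C:=\bigl\{h\in Z \mid \abs{f_\alpha-h}\wedge v\le a_\gamma\bigr\}
\end{displaymath}
is weak$^*$-closed. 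Using the identity $\abs{k}\wedge v\le a_\gamma\iff(\abs{k}-a_\gamma)^+\wedge(v-a_\gamma)=0$ (valid as $0\le a_\gamma\le v$), letting $P$ be the band projection of $Z$ onto the band generated by $v-a_\gamma$, and using that $P$ is a lattice homomorphism, one rewrites the defining condition (with $k=f_\alpha-h$) as $\abs{Pk}\le Pa_\gamma$, whence $C=f_\alpha-P^{-1}\bigl([-Pa_\gamma,Pa_\gamma]\bigr)$. By~(b) this set is weak$^*$-closed provided $P$ is weak$^*$-continuous, and that follows from~(c): the adjoint $P^{*}$ satisfies $0\le P^{*}\le I$, so $0\le P^{*}x\le x\in X$ for $x\in X_+$, and since $X$ is an ideal of $X^{**}$ the operator $P^{*}$ restricts to a bounded operator $S$ on $X$ with $S^{*}=P$; hence $P$ is an adjoint operator and therefore weak$^*$-continuous. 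Now $f_\beta\in C$ for all $\beta\ge\alpha_0(\gamma)$ and $f_\beta\xrightarrow{w^*}f$, so $f\in C$, i.e.\ $\abs{f_\alpha-f}\wedge v\le a_\gamma$ for every $\alpha\ge\alpha_0(\gamma)$. As $\gamma$ and $v$ were arbitrary, $f_\alpha\xrightarrow{\rm uo}f$, which is what we want.

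\textbf{The main obstacle.} The genuinely nontrivial ingredient is the auxiliary fact of Step~1: for $X$ order continuous, a norm bounded uo-null net $(g_\alpha)$ in $X^*$ is weak$^*$-null. For this, given $x\in X_+$, one has $\abs{\langle g_\alpha,x\rangle}\le\langle\abs{g_\alpha},x\rangle$, and $(\abs{g_\alpha})$ is again norm bounded and uo-null; replacing $X^*$ by the carrier band of~$x$ (a projection band, on which $x$ acts as a strictly positive order continuous functional) and invoking the AL-representation of Section~\ref{sec:AL} together with Remark~\ref{uo-kothe}, the problem reduces to the following: a net that is uo-null in a regular sublattice of some $L_1(\mu)$ and bounded in the ambient $X^*$-norm must have $L_1(\mu)$-norms tending to~$0$. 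One checks that the $X^*$-norm dominates a multiple of the $L_1(\mu)$-norm there, and --- crucially --- that order continuity of $X$ forces the $X^*$-unit ball to be uniformly $\mu$-integrable in the $L_1(\mu)$-representation, so the conclusion follows by combining the uo-null-ness (which controls the ``truncated'' part) with a Vitali-type argument (which controls the overflow). Making this uniform-integrability step precise for arbitrary order continuous $X$, and handling the passage from sequences to nets in the $L_0(\mu)$-calculus, is the crux of the whole proof; the rest is straightforward.
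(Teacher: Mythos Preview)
The paper does not prove this statement: it is quoted, without argument, from \cite[Theorem~2.2]{Gao:14}. There is therefore no ``paper's own proof'' to compare your attempt against, and any proof you supply is necessarily going beyond what the present paper does. I will comment on your proposal on its own merits.

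Your Step~2 is fully correct and nicely executed. The identity you use is exactly
\[
  (\abs{k}-a_\gamma)^+\wedge(v-a_\gamma)=\bigl(\abs{k}\wedge v-a_\gamma\bigr)^+
  \qquad(0\le a_\gamma\le v),
\]
which follows from distributivity, so the defining condition of $C$ is indeed $\abs{Pk}\le Pa_\gamma$. Your argument that every band projection $P$ on $X^*$ is weak$^*$-continuous when $X$ is an ideal of $X^{**}$ is also right, and with~(b) it gives the weak$^*$-closedness of $C$; the upgrade from weak$^*$- to uo-convergence then goes through cleanly.

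The genuine gap is in your ``main obstacle''. The auxiliary fact --- that a norm bounded uo-null \emph{net} in $X^*$ is weak$^*$-null when $X$ is order continuous --- is itself a substantial result from~\cite{Gao:14}, of comparable depth to the theorem you are trying to prove, and your sketch does not establish it. Two concrete problems. First, Proposition~\ref{L0} and Remark~\ref{uo-kothe} of the present paper identify uo-convergence with a.e.\ convergence only for \emph{sequences}; for nets this equivalence fails, so a ``Vitali-type argument'' that presupposes a.e.\ control does not apply as written. You flag this yourself but do not resolve it. Second, the assertion that ``order continuity of $X$ forces the $X^*$-unit ball to be uniformly $\mu$-integrable in the $L_1(\mu)$-representation'' is neither proved nor obviously true in the generality claimed (the carrier band need not have a weak unit, so even the $L_\infty(\mu)\subseteq B$ inclusion of Remark~\ref{re-l-infty} is unavailable without further reduction). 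In short, you have correctly isolated where the difficulty lies and built a clean superstructure around it, but the load-bearing lemma is left at the level of a heuristic.
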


\begin{proposition}
  Every (sequentially) boundedly uo-complete Banach lattice is order complete (respectively, $\sigma$-order complete).
\end{proposition}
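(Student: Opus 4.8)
The plan is to prove the contrapositive: if $X$ is a Banach lattice that is not $\sigma$-order complete, then $X$ is not sequentially boundedly uo-complete; the order-complete case will follow by the same construction run over nets instead of sequences. So suppose $X$ fails $\sigma$-order completeness. Then there is an increasing sequence $0\le x_1\le x_2\le\cdots$ in $X$ which is order bounded, say $x_n\le w$ for all $n$, but has no supremum in $X$. Passing to $X^\delta$, the supremum $x:=\sup_n x_n$ exists there, and $x_n\uparrow x$ in $X^\delta$, so $x_n\xrightarrow{\rm o}x$ in $X^\delta$; in particular $(x_n)$ is order Cauchy in $X^\delta$, hence order Cauchy, hence uo-Cauchy, in $X$ as well (order Cauchy in the larger lattice restricts to order Cauchy in the sublattice $X$ since $X$ is regular in $X^\delta$ by Lemma~\ref{regular}, using Corollary~\ref{AS} on the double net). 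Also $(x_n)$ is norm bounded: $0\le x_n\le w$ gives $\norm{x_n}\le\norm{w}$ since the norm is monotone.

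Next I would show $(x_n)$ is not uo-convergent in $X$. Suppose toward a contradiction that $x_n\xrightarrow{\rm uo}z$ in $X$ for some $z\in X$. Since $(x_n)$ is order bounded (by $w$), uo-convergence coincides with order convergence for order bounded sequences, so $x_n\xrightarrow{\rm o}z$ in $X$, and hence $x_n\xrightarrow{\rm o}z$ in $X^\delta$ by Corollary~\ref{AS}. But $x_n\xrightarrow{\rm o}x$ in $X^\delta$ too, and order limits are unique, so $z=x$. Thus $x\in X$ and, since $x_n\uparrow x$ in $X^\delta$ and $X$ is regular in $X^\delta$, we would get $x_n\uparrow x$ in $X$ (Lemma~\ref{regular}), i.e. $x=\sup_n x_n$ in $X$ — contradicting the choice of $(x_n)$. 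Hence $(x_n)$ is a norm bounded uo-Cauchy sequence in $X$ with no uo-limit in $X$, so $X$ is not sequentially boundedly uo-complete.

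For the non-sequential statement, the argument is identical with ``sequence'' replaced by ``net'': if $X$ is not order complete, pick an increasing order bounded net $(x_\alpha)$ in $X_+$ with no supremum in $X$, note it is norm bounded and uo-Cauchy in $X$ (it order-converges in $X^\delta$, hence is order Cauchy there, hence order Cauchy and uo-Cauchy in $X$), and the same uniqueness-of-order-limit argument shows it cannot uo-converge in $X$ without forcing its supremum to exist in $X$. Therefore every boundedly uo-complete Banach lattice is order complete, and every sequentially boundedly uo-complete Banach lattice is $\sigma$-order complete.

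I expect the only mildly delicate point to be the passage ``order Cauchy in $X^\delta$ $\Rightarrow$ uo-Cauchy in $X$'': the cleanest route is to observe that an increasing order-bounded sequence in $X^\delta$ is automatically order convergent there (to its supremum), hence order Cauchy there, and then apply Corollary~\ref{AS} to the double net $(x_n-x_m)_{(n,m)}$, which lives in $X$, to conclude it is order null in $X$; order Cauchy in $X$ trivially implies uo-Cauchy in $X$. Everything else is a routine invocation of regularity of $X$ in $X^\delta$ together with the coincidence of order and uo convergence on order bounded sequences, both already available in the excerpt.
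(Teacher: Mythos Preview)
Your proposal is correct and follows essentially the same route as the paper's proof, just phrased as a contrapositive rather than a direct argument. The paper takes an arbitrary increasing order bounded positive net, observes it is order convergent in $X^\delta$ and hence order Cauchy (thus uo-Cauchy) in $X$ via Corollary~\ref{AS}, invokes bounded uo-completeness to get a uo-limit in $X$, uses order boundedness to upgrade this to an order limit, and notes that the order limit of an increasing net is its supremum; your argument runs the identical chain of implications in reverse.
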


\begin{proof}
   Let $(x_\alpha)$ be an increasing order bounded positive net in a boundedly uo-complete Banach lattice~$X$. Then $(x_\alpha)$ is order convergent in the order completion $X^\delta$ and, therefore, is order Cauchy in~$X^\delta$. It follows from Corollary~\ref{AS} that $(x_\alpha)$ is order Cauchy, and, therefore, uo-Cauchy in~$X$. By assumption, $(x_\alpha)$ uo-converges to some $x\in X$. Since $(x_\alpha)$ is order bounded, we have $x_\alpha\xrightarrow{\rm o}x$. The order limit $x$ is easily seen to be the supremum of $(x_\alpha)$ in~$X$.

  The proof of the sequential version is similar.
\end{proof}

The converse is false: $c_0$ is order complete, yet it is not sequentially boundedly uo-complete.

\begin{proposition}\label{komlos-uocom}
A Banach lattice $X$ with the Koml\'os property is sequentially boundedly uo-complete.
\end{proposition}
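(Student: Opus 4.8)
The plan is to show that a norm bounded uo-Cauchy sequence in $X$ must already be uo-convergent, which is exactly sequential bounded uo-completeness. So let $(x_n)$ be a norm bounded uo-Cauchy sequence in $X$. The Koml\'os property applies to $(x_n)$: it produces a subsequence $(y_n)$ and a vector $y \in X$ such that the Ces\`aro means of \emph{every} subsequence of $(y_n)$ uo-converge to $y$. In particular, the Ces\`aro means of $(y_n)$ itself uo-converge to $y$.

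The key observation is that $(y_n)$, being a subsequence of a uo-Cauchy sequence, is itself uo-Cauchy; hence by Corollary~\ref{uo-Cesaro-uo0} its Ces\`aro means are also uo-Cauchy. But we just said those Ces\`aro means uo-converge to $y$. Now I want to conclude that $(y_n)$ itself uo-converges to $y$. The natural route is a Ces\`aro-type converse: if a uo-Cauchy sequence has Ces\`aro means uo-converging to $y$, then the sequence itself uo-converges to $y$. To see this, pass to the universal completion $X^u$ via Corollary~\ref{uo-ucompl}: in $X^u$ the sequence $(y_n)$ is order Cauchy (since uo-Cauchy in $X$ means order convergent in $X^u$), so $(y_n)$ is order convergent in $X^u$ to some vector $z$; then by Lemma~\ref{o-cesaro} (applied in the $\sigma$-order complete $X^u$ to $y_n - z$) the Ces\`aro means of $(y_n)$ order-converge to $z$ in $X^u$. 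On the other hand, the Ces\`aro means uo-converge to $y$ in $X$, hence order-converge to $y$ in $X^u$ by Corollary~\ref{uo-ucompl}. By uniqueness of order limits in $X^u$, $z = y$. Therefore $(y_n)$ order-converges to $y$ in $X^u$, i.e.\ $y_n \xrightarrow{\rm uo} y$ in $X$.

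Finally I lift this from the subsequence $(y_n)$ back to the whole sequence $(x_n)$. Since $(x_n)$ is uo-Cauchy and a subsequence $(y_n)$ uo-converges to $y$, the full sequence must uo-converge to $y$: indeed, for any $u \in X_+$, the double net $\abs{x_n - x_m} \wedge u \xrightarrow{\rm o} 0$, and $\abs{x_n - y} \wedge u \le \abs{x_n - y_m}\wedge u + \abs{y_m - y}\wedge u$ (using $\abs{a+b}\wedge u \le \abs a \wedge u + \abs b \wedge u$), where along the subsequence the first term is controlled by the uo-Cauchy property and the second tends to $0$ in order; a standard argument with dominating nets then gives $\abs{x_n - y}\wedge u \xrightarrow{\rm o} 0$. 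Hence $x_n \xrightarrow{\rm uo} y$ with $y \in X$, so $X$ is sequentially boundedly uo-complete.

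The main obstacle is the converse-Ces\`aro step: extracting uo-convergence of $(y_n)$ from uo-convergence of its Ces\`aro means. This is false for general sequences (Ces\`aro averaging destroys information), so it is essential to exploit that $(y_n)$ is uo-Cauchy; the cleanest way to make this rigorous is to transfer everything to $X^u$, where uo-Cauchy becomes genuine order convergence, and then invoke Lemma~\ref{o-cesaro} together with uniqueness of order limits. The lifting from subsequence to full sequence is routine once one has the elementary inequality $\abs{a+b}\wedge u \le \abs a\wedge u + \abs b\wedge u$ and the fact that uo-convergence along a subsequence of a uo-Cauchy sequence forces uo-convergence of the whole sequence.
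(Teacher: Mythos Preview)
Your proof is correct and follows essentially the same route as the paper's: pass to the universal completion $X^u$ via Corollary~\ref{uo-ucompl}, use Lemma~\ref{o-cesaro} there to identify the Ces\`aro limit with the order limit, and conclude by uniqueness of order limits. The paper's version is marginally more streamlined: it applies Corollary~\ref{uo-ucompl} to the \emph{full} sequence $(x_n)$ right away, obtaining $x_n\xrightarrow{\rm o}x$ in $X^u$ for some $x\in X^u$, and then only needs to show $x\in X$; this makes your final ``lifting from subsequence to full sequence'' step unnecessary, since once $x=y\in X$ the uo-convergence of the whole sequence is immediate from the first part of Corollary~\ref{uo-ucompl}.
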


\begin{proof}
Let $(x_n)$ be a bounded uo-Cauchy sequence in~$X$. By Corollary~\ref{uo-ucompl}, there exists a vector $x\in X^u$ such that $x_n\xrightarrow{\rm o}x$ in~$X^u$. Again by Corollary~\ref{uo-ucompl}, it suffices to show that $x\in X$. The Koml\'os property yields a subsequence $(y_n)$ of $(x_n)$ and a vector $y\in X$ such that $\frac1n\sum_1^ny_i\xrightarrow{\rm uo}y$ in~$X$, and, therefore, $\frac1n\sum_1^ny_i\xrightarrow{\rm o}y$ in~$X^u$. Since $y_n\xrightarrow{\rm o}x$ in~$X^u$, Lemma~\ref{o-cesaro} yields $\frac1n\sum_1^ny_i\xrightarrow{\rm o}x$ in~$X^u$. It follows that $x=y\in X$.
\end{proof}

This proposition provides another reason why $C[0,1]$ fails the
Koml\'os property; cf. Example~\ref{CK-Komlos}.  Note that in
Example~\ref{unkom}, the space $\ell_\infty(\Gamma)$ is  sequentially
boundedly uo-complete by Remark~\ref{uo-kothe}; this illustrates that the converse of this proposition is false in general. We can now present a convenient criterion for determining when a Banach lattice has the Koml\'os or the pre-Koml\'os property.

\begin{theorem}\label{pre-komlos}
Let $X$ be a Banach lattice such that every principal band in $X$ admits a strictly positive order continuous functional. Then $X$ has the pre-Koml\'os property. Moreover, $X$ has the Koml\'os property iff it is sequentially boundedly uo-complete.
\end{theorem}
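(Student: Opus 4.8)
The plan is to reduce the whole statement to Koml\'os' classical Theorem~\ref{kom} via the AL-representation machinery of Section~\ref{sec:AL}, together with the sublattice results of Sections~\ref{sec:reg}--\ref{uo-sec}. Fix a norm bounded sequence $(x_n)$ in $X$ (the case of a sequence with infinitely many zero terms being trivial). As in Remark~\ref{uo-ae}, let $B$ be a principal band in $X$ containing every $x_n$; by hypothesis $B$ admits a strictly positive order continuous functional $h$, and since positive functionals on Banach lattices are norm bounded we have $h(\abs{x_n})\le\norm{h}\norm{x_n}$, so $(x_n)$ is bounded in the AL-norm $\norm{\cdot}_L=h(\abs{\cdot})$. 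Let $L_1(\mu)$ be the AL-representation of $(B,h)$; because $B$ has a weak unit we may take $\mu$ to be a probability measure. Theorem~\ref{repre_thm}\eqref{repre_thmi4} and Remark~\ref{uo-kothe} then provide the dictionary I will use throughout: for a sequence $(u_n)$ in $B$ one has $u_n\xrightarrow{\rm uo}0$ in $X$ iff in $B$ iff $u_n\xrightarrow{\rm a.e.}0$ in $L_1(\mu)$, and $(u_n)$ is uo-Cauchy in $X$ iff it converges $\mu$-a.e.\ to a measurable function.

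For the pre-Koml\'os property I would apply Theorem~\ref{kom} to $(x_n)$ inside $L_1(\mu)$: there are a subsequence $(y_n)$ and $g\in L_1(\mu)$ such that the Ces\`aro means of every subsequence of $(y_n)$ converge $\mu$-a.e.\ to $g$. These Ces\`aro means lie in $B$ and converge a.e.\ to a measurable function, hence are uo-Cauchy in $X$ by the dictionary; this is precisely the pre-Koml\'os property.

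For the equivalence in the second sentence, the forward implication is Proposition~\ref{komlos-uocom}. For the converse, suppose $X$ is sequentially boundedly uo-complete and keep the notation above. Let $(z_n)$ be any subsequence of $(y_n)$ with Ces\`aro means $a_n$. By the previous paragraph $(a_n)$ is uo-Cauchy, and $\norm{a_n}\le\sup_k\norm{x_k}$, so $(a_n)$ is norm bounded; hence $a_n\xrightarrow{\rm uo}y$ in $X$ for some $y\in X$. Since $B$ is a band it is order closed, hence uo-closed by Proposition~\ref{uo_prop1}, so $y\in B$. Because $B$ is regular in $X$, Theorem~\ref{uo_regular} gives $a_n\xrightarrow{\rm uo}y$ in $B$, and then Theorem~\ref{repre_thm}\eqref{repre_thmi4} with Remark~\ref{uo-kothe} yields $a_n\xrightarrow{\rm a.e.}y$ in $L_1(\mu)$. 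Comparing with $a_n\xrightarrow{\rm a.e.}g$ forces $y=g$, so the uo-limit $y$ is independent of the choice of $(z_n)$. Therefore the Ces\`aro means of every subsequence of $(y_n)$ uo-converge to this common $y$, i.e.\ $X$ has the Koml\'os property.

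The main obstacle is the last point: a priori, sequential bounded uo-completeness only furnishes, for each subsequence, \emph{some} uo-limit in $X$, and one must verify these limits all coincide. This is exactly where one needs (i) that a band is uo-closed, so the abstractly produced limit returns to $B$ where the AL-dictionary applies, and (ii) that Koml\'os' theorem delivers a \emph{single} a.e.-limit $g$ for all subsequences; pinning $y$ to $g$ then closes the argument. Everything else is routine bookkeeping with the results already established.
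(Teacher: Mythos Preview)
Your proof is correct and follows essentially the same route as the paper: reduce to a principal band $B$ with a strictly positive order continuous functional, pass to the AL-representation $L_1(\mu)$, apply the classical Koml\'os theorem there, and then use the uo/a.e.\ dictionary (Remark~\ref{uo-ae}) together with Proposition~\ref{uo_prop1} to pull the conclusion back to $X$; the uniqueness of the uo-limit is pinned down exactly as you do, by identifying it with the single a.e.-limit $g$. The paper's write-up is slightly terser (it invokes Remark~\ref{uo-ae} wholesale rather than unpacking Theorem~\ref{repre_thm} and Theorem~\ref{uo_regular}), but the logical structure and all the key ingredients match.
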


\begin{proof}
 Let $(x_n)$ be a norm bounded sequence in~$X$. Let $B$ and $L_1(\mu)$ be as in Remark~\ref{uo-ae}. Since $(x_n)$ is also norm bounded in $L_1(\mu)$,  Koml\'os'
Theorem~\ref{kom} yields a subsequence $(y_n)$ of $(x_n)$ such that the Ces\`aro means $(s_m)$ of \emph{any} subsequence of $(y_n)$ converge almost everywhere to some $g\in L_1(\mu)$. It follows from Remark~\ref{uo-ae} that $(s_n)$ is uo-Cauchy in~$X$. This proves that $X$ has the pre-Koml\'os property.

Suppose that $X$ is also sequentially uo-complete. By the preceding paragraph, $s_m\xrightarrow{\rm a.e.}g$ in $L_1(\mu)$ and $(s_m)$ is uo-Cauchy in~$X$. It follows that $s_m\xrightarrow{\rm uo}s$ for some $s\in X$. Proposition~\ref{uo_prop1} yields $s\in B$. By Remark~\ref{uo-ae}, $s_m\xrightarrow{\rm a.e.}s$ in $L_1(\mu)$. It follows that $s$ equals $g$ and does not depend on the choice of a subsequence. Thus, $X$ has the Koml\'os property. The other direction follows from Proposition~\ref{komlos-uocom}.
\end{proof}

\begin{corollary}\label{str-pos-pre-komlos}
  If a Banach lattice admits a strictly positive order continuous functional then it has the pre-Koml\'os property.
\end{corollary}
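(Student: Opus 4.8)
The plan is to derive this immediately from Theorem~\ref{pre-komlos} by showing that the hypothesis of that theorem is satisfied: namely, that \emph{every} principal band in $X$ admits a strictly positive order continuous functional, given that $X$ itself does.

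So let $x_0^*$ be a strictly positive order continuous functional on $X$, and let $B$ be an arbitrary principal band in $X$. I would consider the restriction $x_0^*|_B$. Strict positivity of $x_0^*|_B$ is immediate, since for $0<y\in B\subseteq X$ we have $x_0^*(y)>0$. For order continuity of $x_0^*|_B$, recall that a band is an ideal, hence a regular sublattice of $X$; therefore, by Lemma~\ref{regular}, any net with $y_\alpha\downarrow 0$ in $B$ also satisfies $y_\alpha\downarrow 0$ in $X$, so that $x_0^*(y_\alpha)\to 0$ by order continuity of $x_0^*$ on $X$. Thus $x_0^*|_B$ is a strictly positive order continuous functional on $B$.

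Since $B$ was an arbitrary principal band, Theorem~\ref{pre-komlos} applies and yields that $X$ has the pre-Koml\'os property. There is no real obstacle here; the only point to be slightly careful about is order continuity of the restricted functional, which is handled by the regularity of bands as sublattices. (If desired, one could alternatively invoke that a band in a vector lattice is automatically a regular, indeed order closed, ideal, so the restriction argument goes through verbatim.)
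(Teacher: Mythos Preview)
Your proof is correct and is exactly the intended approach: the paper states this corollary immediately after Theorem~\ref{pre-komlos} without proof, precisely because the restriction of a strictly positive order continuous functional to any band (a regular sublattice) remains strictly positive and order continuous, so the hypothesis of Theorem~\ref{pre-komlos} is trivially satisfied.
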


\begin{example}\label{ell_infty}
  \emph{$\ell_\infty$ has the Koml\'os property.} Indeed, $\ell_\infty$ admits an order continuous strictly positive functional; it is boundedly uo-complete by Theorem~\ref{uo-compl-dual}.
\end{example}

\begin{remark}\label{Moore}
  In~\cite[Theorem 3]{Moore:71}, the author identifies a large class of Banach lattices which admit strictly positive order continuous functionals. By Theorem~\ref{pre-komlos}, these spaces have the pre-Koml\'os property.
\end{remark}

\begin{proposition}\label{kom-reg}
Let $X$ be a Banach lattice which, as a vector lattice, is a regular sublattice of an order continuous Banach lattice~$Y$. Then $X$ has the pre-Koml\'os property. Moreover, $X$ has the Koml\'os property iff it is sequentially boundedly uo-complete.
\end{proposition}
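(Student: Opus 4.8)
The plan is to reduce everything to Theorem~\ref{pre-komlos} and then reuse the argument given there, letting regularity transport uo-Cauchyness and uo-convergence back and forth between $X$ and the ambient space $Y$. First I would observe that, $Y$ being order continuous, every principal band of $Y$ admits a strictly positive order continuous functional by Proposition~\ref{oc-wu-str-pos}; hence $Y$ satisfies the hypothesis of Theorem~\ref{pre-komlos} and, in particular, has the pre-Koml\'os property. Now take a norm bounded sequence $(x_n)$ in $X$; it is norm bounded in $Y$, so there is a subsequence $(y_n)$ such that the Ces\`aro means of every subsequence of $(y_n)$ are uo-Cauchy in $Y$. These Ces\`aro means are finite linear combinations of the $x_n$ and hence lie in $X$; since $X$ is regular in $Y$, Theorem~\ref{uo_regular}, applied to the ``double'' net of differences, shows that a sequence in $X$ is uo-Cauchy in $X$ iff it is uo-Cauchy in $Y$. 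Therefore these Ces\`aro means are uo-Cauchy in $X$, and $X$ has the pre-Koml\'os property.

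For the ``moreover'' part, the forward implication is Proposition~\ref{komlos-uocom}, so I would assume $X$ is sequentially boundedly uo-complete and fix a norm bounded sequence $(x_n)$ in $X$. Following Remark~\ref{uo-ae}, pass to the principal band $B$ of $Y$ generated by $(x_n)$ --- which is order continuous with a weak unit --- and to its AL-representation $L_1(\mu)$ with $\mu$ a probability measure, so that $B$ is a regular sublattice of $L_1(\mu)$ and all of $(x_n)$, its subsequences, and all their Ces\`aro means lie in $B$. Koml\'os' Theorem~\ref{kom} applied in $L_1(\mu)$ yields a subsequence $(y_n)$ and $g\in L_1(\mu)$ such that the Ces\`aro means $(s_m)$ of any subsequence of $(y_n)$ converge to $g$ $\mu$-almost everywhere; by Remark~\ref{uo-ae}, $(s_m)$ is uo-Cauchy in $B$, hence in $Y$, hence in $X$ (regularity and Theorem~\ref{uo_regular}). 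Since $\norm{s_m}\le\sup_n\norm{x_n}$, sequential bounded uo-completeness gives $s_m\xrightarrow{\rm uo}s$ for some $s\in X$.

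It then remains to identify $s$ with $g$, and this is the step I expect to be the \emph{main obstacle}: one must show that the uo-limit $s$ produced abstractly by uo-completeness coincides with the concrete almost everywhere Koml\'os limit $g$. The device that makes this work is to force $s$ back into the band $B$: by Theorem~\ref{uo_regular}, $s_m\xrightarrow{\rm uo}s$ in $Y$; since $(s_m)\subset B$ and $B$, being a band, is order closed and therefore uo-closed by Proposition~\ref{uo_prop1}, we get $s\in B$, whence $s_m\xrightarrow{\rm uo}s$ in $B$ (again Theorem~\ref{uo_regular}), which in the representation means that $s_m\to s$ $\mu$-almost everywhere by Proposition~\ref{L0} (cf.\ Remark~\ref{uo-kothe}). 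Comparing with $s_m\to g$ $\mu$-a.e.\ gives $s=g$; as $g$ does not depend on the chosen subsequence of $(y_n)$, neither does $s$, so the Ces\`aro means of every subsequence of $(y_n)$ uo-converge to the common vector $s\in X$, and $X$ has the Koml\'os property. In short, the only real content beyond Theorem~\ref{pre-komlos} is the passage of uo-Cauchyness and uo-convergence along the tower of regular sublattices $X$, $B$, $L_1(\mu)$, together with the uo-closedness of bands that pins the limit down inside $B$.
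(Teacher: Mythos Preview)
Your argument is correct, but you are working much harder than the paper does. The paper's proof is two lines: it observes that every principal band $B_X$ in $X$ sits (by regularity) inside a principal band $B_Y$ of $Y$; since $Y$ is order continuous, Proposition~\ref{oc-wu-str-pos} gives a strictly positive order continuous functional on $B_Y$, whose restriction to $B_X$ is still strictly positive and (again by regularity) order continuous. Thus $X$ itself satisfies the hypothesis of Theorem~\ref{pre-komlos}, and that theorem applies directly to $X$ --- both for the pre-Koml\'os statement and for the ``moreover'' clause. There is no need to run the Koml\'os argument in $Y$ and transport uo-Cauchyness or uo-limits back and forth through the tower $X\subset B\subset L_1(\mu)$.

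What your route buys is a concrete illustration of how regularity moves uo-notions between $X$, $Y$, $B$, and $L_1(\mu)$, and in particular the nice observation that Proposition~\ref{uo_prop1} pins the abstract uo-limit into the band $B$ so it can be compared with the a.e.\ limit $g$. One small point you glide over: when you say a norm bounded sequence in $X$ is norm bounded in $Y$, you are using that the inclusion $X\hookrightarrow Y$ is a positive operator between Banach lattices and hence automatically continuous; this is true but worth stating, since the hypothesis only identifies $X$ with a sublattice of $Y$ \emph{as vector lattices}. The paper's approach sidesteps this entirely by never needing the $Y$-norm.
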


\begin{proof}
Due to regularity, every principal band in $X$ is contained in a principal band of $Y$ and thus admits a strictly positive order continuous functional; cf.~Proposition~\ref{oc-wu-str-pos}. Apply Theorem \ref{pre-komlos}.
\end{proof}

The following simple characterization of the Koml\'os property for order continuous Banach lattices is an immediate consequence of this proposition and Theorem~\ref{uo-compl}. Cf. Example~\ref{sep}.

\begin{corollary}\label{kb}
Let $X$ be an order continuous Banach lattice. Then $X$ has the pre-Koml\'os property. Moreover, $X$ has the Koml\'os property iff it is a KB-space.
\end{corollary}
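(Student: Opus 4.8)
The plan is to derive Corollary~\ref{kb} directly from Proposition~\ref{kom-reg} and Theorem~\ref{uo-compl}, the two facts immediately preceding it. The key observation is that an order continuous Banach lattice $X$ is trivially a regular sublattice of itself: indeed, $X$ is a sublattice of $X$, and regularity is automatic since infima in $X$ agree with infima in $X$. Hence $X$ qualifies as ``a regular sublattice of an order continuous Banach lattice $Y$'' with $Y=X$.

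First I would apply Proposition~\ref{kom-reg} with $Y=X$. This immediately yields that $X$ has the pre-Koml\'os property, and that $X$ has the Koml\'os property if and only if $X$ is sequentially boundedly uo-complete. Second, I would invoke Theorem~\ref{uo-compl}, which states that an order continuous Banach lattice is a KB-space if and only if it is (sequentially) boundedly uo-complete. Combining these two equivalences, $X$ has the Koml\'os property iff $X$ is sequentially boundedly uo-complete iff $X$ is a KB-space. This completes the proof.

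There is essentially no obstacle here; the corollary is a straightforward specialization. The only point requiring a moment's care is the trivial observation that every vector lattice is a regular sublattice of itself, so that the hypothesis of Proposition~\ref{kom-reg} is met. Thus the proof is a single short paragraph:

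\begin{proof}
Since $X$ is trivially a regular sublattice of itself and $X$ is order continuous, Proposition~\ref{kom-reg} (applied with $Y=X$) shows that $X$ has the pre-Koml\'os property and that $X$ has the Koml\'os property iff it is sequentially boundedly uo-complete. By Theorem~\ref{uo-compl}, the latter holds iff $X$ is a KB-space.
\end{proof}
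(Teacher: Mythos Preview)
Your proof is correct and follows essentially the same approach as the paper, which also derives the corollary as an immediate consequence of Proposition~\ref{kom-reg} (applied with $Y=X$) together with Theorem~\ref{uo-compl}.
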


We observed in Example~\ref{ell_infty} that $\ell_\infty$ has the Koml\'os property; yet it is not a KB-space. Hence, the order continuity assumption in Corollary~\ref{kb} cannot be  weakened to order completeness. The following proposition shows that to verify the Koml\'os property in an order continuous Banach lattice, one does not have to consider ``further subsequences''.

\begin{corollary}
 An order continuous Banach lattice $X$ has the  Koml\'os property iff every norm bounded sequence in $X$ has a subsequence whose Ces\`aro means are uo-convergent in~$X$.
\end{corollary}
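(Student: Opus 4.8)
The plan is to reduce everything to Corollary~\ref{kb}, which identifies the Komlós property of an order continuous Banach lattice with being a KB-space. One direction is immediate: if $X$ has the Komlós property, then for any norm bounded sequence the Komlós subsequence $(y_n)$ is, in particular, a subsequence of the original sequence whose Cesàro means uo-converge, so the stated condition holds.

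For the converse I would argue by contradiction. Assume that every norm bounded sequence in $X$ has a subsequence whose Cesàro means uo-converge, but that $X$ is not a KB-space. Invoking the standard $c_0$-dichotomy for Banach lattices (e.g.\ \cite[Theorem~4.60]{Aliprantis:06} or \cite[Theorem~2.4.12]{Meyer-Nieberg:91}), I obtain a disjoint sequence $(e_n)$ in $X_+$ with $\norm{e_n}=1$ and $M:=\sup_n\bignorm{\sum_{k=1}^ne_k}<\infty$. Put $f_n=\sum_{k=1}^ne_k$; this is norm bounded, and the aim is to show that no subsequence of $(f_n)$ has uo-convergent Cesàro means.

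The key computation takes place in the universal completion $X^u$. Recall that $X$ is order dense, hence regular, in $X^u$, that $X^u$ is order complete and $\sigma$-laterally complete, and therefore the disjoint sum $e:=\sup_nf_n$ exists in $X^u$. Fix any subsequence $(f_{n_j})$; being a subsequence of the increasing sequence $(f_n)$, it is increasing with $\sup_jf_{n_j}=e$ in $X^u$, so $f_{n_j}\xrightarrow{\rm o}e$ in $X^u$. Applying Lemma~\ref{o-cesaro} to $(f_{n_j}-e)$ in $X^u$ shows that the Cesàro means $b_m$ of $(f_{n_j})$ satisfy $b_m\xrightarrow{\rm o}e$ in $X^u$. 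By Corollary~\ref{uo-ucompl}, $(b_m)$ is uo-Cauchy in $X$; moreover, if $(b_m)$ were uo-convergent in $X$, then by Corollary~\ref{uo-ucompl} again its uo-limit would be an order limit of $(b_m)$ in $X^u$ and hence equal to $e$. So it suffices to check that $e\notin X$.

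This last point is where order continuity enters. If $e$ were in $X$, then, since $X$ is a regular sublattice of $X^u$ containing $e$, one checks that $f_n\uparrow e$ holds in $X$ as well, so order continuity of the norm gives $\norm{e-f_n}\to 0$. But $e-f_n\ge e_{n+1}$ in $X^u$, hence in $X$, so $\norm{e-f_n}\ge\norm{e_{n+1}}=1$, a contradiction. Thus for every subsequence $(f_{n_j})$ the Cesàro means fail to uo-converge in $X$, contradicting the assumption; hence $X$ is a KB-space and, by Corollary~\ref{kb}, has the Komlós property. I expect the only real care needed to be in moving order relations between $X$, $X^\delta$ and $X^u$ — all justified by regularity — and in the appeal to the $c_0$-dichotomy; notably, the ``same limit for all subsequences'' phenomenon that usually accompanies Komlós-type statements never has to be addressed here, since we only construct a single obstruction rather than prove a positive statement.
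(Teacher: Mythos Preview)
Your proof is correct, but it takes a different route from the paper's. Both proofs reduce the nontrivial direction to Corollary~\ref{kb} and then exhibit, under the assumption that $X$ is not a KB-space, a norm bounded sequence $(f_n)$ with $f_n=\sum_{k=1}^n e_k$ none of whose subsequences has uo-convergent Ces\`aro means. The difference is in how the obstruction is verified. The paper embeds $c_0$ into $X$ as a closed sublattice, observes that a closed sublattice of an order continuous Banach lattice is order closed and hence uo-closed (Proposition~\ref{uo_prop1}), so any uo-limit of the Ces\`aro means would lie in $c_0$; then Corollary~\ref{subl} pushes the uo-convergence down to $c_0$, contradicting Example~\ref{sep}. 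You instead go \emph{up} to the universal completion $X^u$: using Lemma~\ref{o-cesaro} and Corollary~\ref{uo-ucompl} you identify the only possible uo-limit as $e=\sup_n f_n\in X^u$, and then rule out $e\in X$ by a direct order-continuity argument. Your approach avoids Proposition~\ref{uo_prop1} and the fact that closed sublattices of order continuous Banach lattices are order closed, at the cost of invoking the universal-completion machinery (Corollary~\ref{uo-ucompl}, Lemma~\ref{o-cesaro}); the paper's approach is a bit shorter because Example~\ref{sep} has already done the concrete work. One minor point: the $c_0$-dichotomy gives $\norm{e_n}$ bounded below by some $c>0$ rather than literally equal to~$1$, but this is harmless since your final inequality $\norm{e-f_n}\ge\norm{e_{n+1}}\ge c>0$ still yields the contradiction.
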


\begin{proof}
  The forward implication is trivial. Suppose $X$ fails the  Koml\'os property.
By Corollary~\ref{kb}, $X$ is not a KB-space, so that $X$ contains a lattice copy of~$c_0$. Without loss of generality, assume that $c_0\subset X$. Let $(f_n)$ be as in Example~\ref{sep}. Then by assumption, $(f_n)$ has a subsequence whose Ces\`aro means uo-converge to some $x\in X$. Note that a norm closed sublattice of an order continuous Banach lattice is order closed, and hence is uo-closed by Proposition~\ref{uo_prop1}. Therefore, $x\in c_0$. Since the Ces\`aro means uo-converge to $x$ in~$X$, they uo-converge to $x$ in $c_0$ by Corollary~\ref{subl}. This contradicts Example~\ref{sep}.
\end{proof}

\subsection{Koml\'os property in function spaces}

Variants of the Koml\'os property in function spaces have appeared in \cite{Day:10,Jimenes:11}, where the authors defined the Koml\'os property with respect to a measure in~\cite{Day:10} and a vector measure in~\cite{Jimenes:11}. We will show that many of the results in \cite{Day:10,Jimenes:11} may be viewed as special cases of our Theorem~\ref{pre-komlos} and its corollaries. Recall that Remark~\ref{uo-kothe} and Proposition~\ref{uo_and_ae} imply that, for a sequence $(x_n)$ in a regular sublattice of $L_0(\mu)$ or $L_0(\nu)$, $(x_n)$ is a.e.\ convergent iff it is uo-Cauchy; $(x_n)$ is a.e.~null iff it is uo-null. Recall also that $L_0(\nu)=L_0\bigl(\abs{\nu}\bigr)$ and $\abs{\nu}$ is a measure defined on a $\sigma$-algebra; a set is $\nu$-null if it is $\abs{\nu}$-null.
Thus, the Koml\'os properties defined in both \cite{Day:10,Jimenes:11} coincide with our notion of Koml\'os property.

Let $X$ be a Banach lattice which is a regular sublattice of $L_0(\mu)$, where $\mu$ is a measure, or of $L_0(\nu)$, where $\nu$ is a vector measure on a $\delta$-ring. Recall that $X$ is said to have the \term{weak $\sigma$-Fatou property} when for every increasing positive norm bounded sequence $(x_n)$, if $(x_n)$ converges a.e.\ to some measurable function~$x$, then $x\in X$.

\begin{proposition}\label{wsF-uo-coml}
  Let $X$ be a Banach lattice which is a regular sublattice of $L_0(\mu)$, where $\mu$ is a measure, or of $L_0(\nu)$, where $\nu$ is a vector measure on a $\delta$-ring. Then $X$ has the weak $\sigma$-Fatou property iff $X$ is sequentially boundedly uo-complete.
\end{proposition}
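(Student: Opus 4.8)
The plan is to prove both implications by connecting the weak $\sigma$-Fatou property to the behaviour of bounded increasing sequences, and then using the fact that, in a regular sublattice of $L_0(\mu)$ or $L_0(\nu)$, uo-Cauchy sequences are precisely the a.e.\ convergent ones (Remark~\ref{uo-kothe} and Proposition~\ref{uo_and_ae}).

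First I would prove the forward direction: assume $X$ has the weak $\sigma$-Fatou property and let $(x_n)$ be a norm bounded uo-Cauchy sequence in $X$. By Remark~\ref{uo-kothe} or Proposition~\ref{uo_and_ae}, $(x_n)$ converges a.e.\ to some measurable function $x$ in $L_0(\mu)$ (resp.\ $L_0(\nu)$); the goal is to show $x\in X$, since then $x_n\xrightarrow{\rm uo}x$ in $X$ by the same results. The difficulty is that $(x_n)$ need not be monotone, so the weak $\sigma$-Fatou property does not apply directly. The standard fix is to pass to a subsequence and build monotone sequences: choose a subsequence $(x_{n_k})$ converging to $x$ a.e.\ and with $\norm{x_{n_{k+1}}-x_{n_k}}\le 2^{-k}$, so that $u:=\abs{x_{n_1}}+\sum_k\abs{x_{n_{k+1}}-x_{n_k}}$ is the a.e.\ (and norm) limit of an increasing positive norm bounded sequence of elements of $X$; hence $u\in X$ by weak $\sigma$-Fatou. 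Since $\abs{x_{n_k}}\le u$ a.e.\ for every $k$ and $\abs{x}\le u$ a.e., and $X$ is an ideal inside $L_0$ (regular sublattices of $L_0$ that are ideals — or at least, $\abs{x}\le u$ with $u\in X$ forces $x\in X$ because $X$ is, as a Banach lattice sitting regularly in $L_0(\mu)$, solid in the relevant sense; if not literally an ideal one argues via the band), we conclude $x\in X$. The cleanest route, if $X$ is not assumed to be an ideal, is to apply the weak $\sigma$-Fatou property to the increasing sequence $(\abs{x_{n_1}}\vee\cdots\vee\abs{x_{n_k}})\wedge$ (something in $X$)$\uparrow$; I would organize this so the monotone bounded sequence whose limit I need lies in $X$ and dominates $x$, then invoke that $X$, being a regular sublattice of $L_0$, contains every measurable function dominated by one of its positive elements — this last point should be checked but is routine since regular sublattices of $L_0(\mu)$ behave like order dense ideals of their ideal-generated spaces.

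For the converse, assume $X$ is sequentially boundedly uo-complete and let $(x_n)$ be an increasing positive norm bounded sequence in $X$ converging a.e.\ to a measurable function $x$. By Remark~\ref{uo-kothe}/Proposition~\ref{uo_and_ae}, a.e.\ convergence of $(x_n)$ means $(x_n)$ is uo-Cauchy in $X$ (it is also norm bounded by hypothesis), so by sequential bounded uo-completeness there is $y\in X$ with $x_n\xrightarrow{\rm uo}y$ in $X$; applying the same translation again, $x_n\to y$ a.e.\ in $L_0(\mu)$, whence $y=x$ a.e.\ and therefore $x=y\in X$. This shows $X$ has the weak $\sigma$-Fatou property.

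The main obstacle is the forward implication, specifically transferring from a not-necessarily-monotone uo-Cauchy sequence to the monotone bounded sequences that the weak $\sigma$-Fatou property can handle, while keeping everything inside $X$; the summable-increments trick together with the solidity of $X$ inside $L_0$ resolves it. Everything else is a direct dictionary translation via Remark~\ref{uo-kothe} and Proposition~\ref{uo_and_ae}.
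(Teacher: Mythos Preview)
Your converse direction (sequential bounded uo-completeness $\Rightarrow$ weak $\sigma$-Fatou) is correct and matches the paper exactly.

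The forward direction has a genuine gap. Your summable-increments trick correctly produces $u\in X$ with $\abs{x_{n_k}}\le u$ a.e.\ for all $k$ and $\abs{x}\le u$ a.e. But the step ``$\abs{x}\le u$ with $u\in X$ forces $x\in X$'' is precisely the statement that $X$ is an ideal of $L_0(\mu)$, and this is \emph{not} assumed---$X$ is only a regular sublattice. Your fallback suggestions (``argue via the band'', ``regular sublattices of $L_0(\mu)$ behave like order dense ideals of their ideal-generated spaces'') do not repair this: a regular sublattice need not be solid in $L_0(\mu)$, and the claim that ``$X$ contains every measurable function dominated by one of its positive elements'' is exactly the ideal property you are not entitled to.

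The paper handles this differently and avoids solidity altogether. It first shows, directly from the weak $\sigma$-Fatou property, that $X$ is $\sigma$-order complete (if $0\le x_n\uparrow\le x$ in $X$, the pointwise supremum lies in $X$ by weak $\sigma$-Fatou and is the sup in $X$ by regularity). Then, after reducing to $x_n\ge 0$, it sets $y_n=\inf_{k\ge n}x_k$, which now exists in $X$ by $\sigma$-order completeness and, by regularity, coincides with the pointwise infimum in $L_0$. Since $y_n\uparrow$ is norm bounded and $y_n\to x$ a.e., weak $\sigma$-Fatou gives $x\in X$. Note that your construction of $u$ could be salvaged along these lines: once $(x_{n_k})$ is order bounded by $u\in X$, the $\sigma$-order completeness step lets you form $\inf_{k\ge m}x_{n_k}$ inside $X$ and finish as the paper does. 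But the $\sigma$-order completeness is the missing ingredient; without it, your argument does not close.
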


\begin{proof}
  Suppose that $X$ is sequentially boundedly uo-complete; let $(x_n)$ be a positive increasing norm-bounded sequence and $x_n\xrightarrow{\rm a.e.}x$ for some measurable function~$x$. Then $(x_n)$ is uo-Cauchy in $X$ and thus is uo-convergent to some $y\in X$ by assumption. Clearly, $(x_n)$ a.e.~converges to~$y$. It follows that $x=y\in X$.

Conversely, suppose that $X$ has the weak $\sigma$-Fatou property. We claim first that $X$ is $\sigma$-order complete. Indeed, let $0\leq x_n\uparrow\leq x$ in~$X$. Then $(x_n(t))$ converges a.e., so that we can put $x_0(t)=\sup_nx_n(t)$. By assumption, $x_0\in X$. It is also clear that $x_n\uparrow x_0$ in $L_0$ and thus in $X$ by Corollary~\ref{reg-obdd-twoway}. This proves the claim. Now let $(x_n)$ be a bounded uo-Cauchy sequence in~$X$. It follows from $x_n=x_n^+-x_n^-$ that we may assume that $x_n\ge 0$ for every~$n$. We have $x_n\xrightarrow{\rm a.e.}x$ for some measurable function~$x$. For each~$n$, define $y_n=\inf_{k\ge n}x_k$. Since $X$ is a regular sublattice of~$L_0$, it follows that in the definition of~$y_n$, the infimum taken in $X$ is the same as that taken in~$L_0$, or, equivalently, taken pointwise. It now follows from $x_n\xrightarrow{\rm a.e.}x$ that $y_n\xrightarrow{\rm a.e.}x$. Clearly, $y_n\uparrow$. By assumption, this yields $x\in X$.
\end{proof}

Thus, one may view sequential bounded uo-completeness as a generalization of the weak $\sigma$-Fatou property from function spaces to general Banach lattices. We can now relate our results to \cite[Theorem 1.1]{Jimenes:11}, which is stated as the main result of~\cite{Jimenes:11} and asserts that if a Banach lattice $X$ is an ideal in $L_1(\nu)$, where $\nu$ is a vector measure on a $\delta$-ring, then $X$ has the Koml\'os property iff it has the weak $\sigma$-Fatou property. In view of Proposition~\ref{wsF-uo-coml},
this result is a special case of our Propositions~\ref{kom-reg}. Moreover, our result applies not only to ideals of $L_1(\nu)$ but also to regular sublattices of $L_p(\nu)$ ($1\leq p<\infty$); simply note that $L_p(\nu)$ ($1\leq p<\infty$) is an order continuous Banach lattice and an ideal of $L_0(\nu)$ (cf.~\cite[Chapter~3]{Juan:11}).

\bigskip

Next, we will relate our results to the results of~\cite{Day:10} on the Koml\'os property in certain function spaces over measure spaces. As we mentioned earlier, the definition of the Koml\'os property in~\cite{Day:10} agrees with our definition. We introduce two large classes of function spaces which include the spaces considered in~\cite{Day:10}, and we will show that our Theorem~\ref{pre-komlos} applies to these spaces. This will imply the main results of~\cite{Day:10}.

\begin{definition}\label{wfi}
A \term{generalized K\"othe function space} over $(\Omega,\Sigma,\mu)$ is a regular sublattice of $L_0(\mu)$ endowed with a complete lattice norm $\norm{\cdot}$ such that $\Omega$ admits a countable partition $(\Omega_n)$ into measurable sets with $\int_{\Omega_n}\abs{x}\mathrm{d}\mu<\infty$ for each $n$ and each $x\in X$.
\end{definition}

This class includes
 K\"othe function spaces (see Definition~1.b.17 in \cite{Lindenstrauss:79}). However, in contrast to K\"othe function spaces, we do not require that  $\mu$ be $\sigma$-finite, or that $X$ be an ideal of $L_0(\mu)$, or that  $\chi_A$ lie in the space for every $A\in\Sigma$ with finite measure. For example, $L_1(\mu)$ is a generalized K\"othe function space for \emph{any} measure~$\mu$.

For a vector lattice~$X$, we write $X^\sim_{\rm oc}$ for the band of all order continuous functionals in $X^\sim$ (in literature, it is often denoted by $X^\sim_n$).

\begin{proposition}\label{wfi-strpos}
Every generalized K\"othe function space admits a strictly positive order continuous functional.
\end{proposition}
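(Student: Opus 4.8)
The plan is to exhibit the functional as a norm-convergent series of integrations over the pieces of the partition supplied by Definition~\ref{wfi}. Let $X$ be a generalized K\"othe function space over $(\Omega,\Sigma,\mu)$ and let $(\Omega_n)$ be a countable measurable partition of $\Omega$ with $\int_{\Omega_n}\abs{x}\,d\mu<\infty$ for all $n$ and all $x\in X$. For each $n$ define $\phi_n\colon X\to\mathbb R$ by $\phi_n(x)=\int_{\Omega_n}x\,d\mu$. The integrability hypothesis makes $\phi_n$ a well-defined real-valued linear functional, and it is clearly positive; hence it is norm bounded, since every positive functional on a Banach lattice is norm continuous.

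The key step is that each $\phi_n$ is order continuous. As $\phi_n\ge0$, it is enough to show that $x_\alpha\downarrow0$ in $X$ implies $\phi_n(x_\alpha)\to0$. Since $X$ is a regular sublattice of $L_0(\mu)$, Lemma~\ref{regular} gives $x_\alpha\downarrow0$ in $L_0(\mu)$; identifying $L_0(\mu)$ with the product $\prod_k L_0(\Omega_k,\mu)$ and passing to the $n$-th coordinate, $x_\alpha|_{\Omega_n}\downarrow0$ in $L_0(\Omega_n,\mu)$. Now $L_1(\Omega_n,\mu)$ is an ideal, hence a regular sublattice, of $L_0(\Omega_n,\mu)$, and by the integrability hypothesis every $x_\alpha|_{\Omega_n}$ lies in it; a routine argument then shows that $0$ is also the infimum of the net in $L_1(\Omega_n,\mu)$, i.e.\ $x_\alpha|_{\Omega_n}\downarrow0$ there. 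Since $L_1(\Omega_n,\mu)$ has order continuous norm, $\phi_n(x_\alpha)=\norm{x_\alpha|_{\Omega_n}}_1\to0$, as required.

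It remains to assemble and verify $h$. Put $c_n=2^{-n}\bigl(1+\norm{\phi_n}\bigr)^{-1}$ and $h=\sum_{n}c_n\phi_n$; this series converges in $X^*$ because $\sum_n c_n\norm{\phi_n}\le\sum_n2^{-n}<\infty$, and $h\ge0$. Recalling that $X^\sim=X^*$ and that $X^\sim_{\rm oc}$ is a band in it, hence norm closed, we conclude $h\in X^\sim_{\rm oc}$, since $h$ is the norm limit of the partial sums $\sum_{n\le N}c_n\phi_n\in X^\sim_{\rm oc}$. Finally, $h$ is strictly positive: if $x>0$ in $X$ then $\mu(\{x>0\})>0$, so $\mu(\{x>0\}\cap\Omega_n)>0$ for some $n$, whence $\phi_n(x)\ge\int_{\{x>0\}\cap\Omega_n}x\,d\mu>0$ and therefore $h(x)\ge c_n\phi_n(x)>0$.

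I expect the order-continuity step to be the main obstacle: one must carefully track the convergence $x_\alpha\downarrow0$ from $X$ down to $\int_{\Omega_n}x_\alpha\,d\mu\to0$, using in turn regularity of $X$ in $L_0(\mu)$, the coordinatewise computation of infima in the product $\prod_k L_0(\Omega_k,\mu)$, regularity of $L_1(\Omega_n,\mu)$ as an ideal of $L_0(\Omega_n,\mu)$, and order continuity of the $L_1$-norm. The integrability condition of Definition~\ref{wfi} is precisely what is needed to make each $\phi_n$ real-valued and to place $x_\alpha|_{\Omega_n}$ in $L_1(\Omega_n,\mu)$ to begin with.
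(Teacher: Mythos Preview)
Your proof is correct and follows essentially the same approach as the paper: define $\phi_n(x)=\int_{\Omega_n}x\,d\mu$, check that each $\phi_n$ is positive and order continuous via the order continuity of the $L_1(\Omega_n,\mu)$-norm, form a norm-convergent series $h=\sum_n c_n\phi_n$, and conclude $h\in X^\sim_{\rm oc}$ from closedness of that band. You spell out in more detail the passage from $x_\alpha\downarrow0$ in $X$ to $\phi_n(x_\alpha)\to0$ and the strict positivity argument, both of which the paper leaves as one-line remarks, but the underlying idea is identical.
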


\begin{proof}
  In the notation of Definition~\ref{wfi}, let $\varphi_n(x)=\int_{\Omega_n}x\,d\mu$. Clearly, each $\varphi_n$ is a positive and, therefore, a bounded functional on~$X$; as well, it is order continuous on $X$ because integration is order continuous on $L_1(\Omega_n)$. The series $\varphi:=\sum_{n=1}^\infty\lambda_n\varphi_n$ converges provided that $0<\lambda_n\downarrow 0$ sufficiently rapidly. It is clear that $\varphi$ is a strictly positive functional on~$X$. Since $X^\sim_{\rm oc}$ is closed, $\varphi$ is order continuous.
\end{proof}

\begin{proposition}\label{wfi-strpos1}
Let $(\Omega,\Sigma,\mu)$ be a $\sigma$-finite measure space and $X$ be a Banach lattice which is an ideal of $L_0(\mu)$. Then $X$ admits a strictly positive order continuous functional.
\end{proposition}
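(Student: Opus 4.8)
The plan is to reduce the statement to Proposition~\ref{wfi-strpos}. Concretely, I would first normalize the measure: since $\mu$ is $\sigma$-finite, I would replace it by an equivalent $\sigma$-finite measure supported on the carrier of $X$ (so that $L_0(\mu)$, and hence $X$ as a sublattice of it, is unchanged), which lets me assume $X$ has full carrier. The goal then becomes: produce a countable measurable partition $(\Omega_n)$ of $\Omega$ with $\int_{\Omega_n}\abs{x}\,d\mu<\infty$ for every $x\in X$ and every $n$. Granting such a partition, $X$ is a \emph{generalized K\"othe function space} in the sense of Definition~\ref{wfi} (it is a regular sublattice of $L_0(\mu)$, being an ideal, and it carries a complete lattice norm), so Proposition~\ref{wfi-strpos} finishes the proof. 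Note that the ``obvious'' partition coming from $\sigma$-finiteness does \emph{not} work: for $X=L_1(t\,dt)$ viewed as an ideal of $L_0$ of Lebesgue measure on $(0,1)$, the function $1/t$ lies in $X$ but is not integrable over any neighbourhood of $0$, so the pieces must be chosen finer and adapted to $X$.

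The construction of $(\Omega_n)$ I would carry out by exhaustion. Let $\mathcal F$ be the collection of measurable $B\subseteq\Omega$ with $\sup_{x\in B_X}\int_B\abs{x}\,d\mu<\infty$; equivalently (since a finite positive functional on a Banach lattice is automatically bounded), $B\in\mathcal F$ iff $\int_B\abs{x}\,d\mu<\infty$ for every $x\in X$. Clearly $\mathcal F$ is closed under taking subsets and finite unions. The crucial point is the claim that \emph{every positive-measure set contains a positive-measure member of $\mathcal F$}. Granting this, a standard maximality/exhaustion argument — working inside each finite-measure piece of $\Omega$, taking countable subfamilies of $\mathcal F$ whose unions have near-maximal measure, amalgamating and disjointifying — yields a countable disjoint family in $\mathcal F$ whose union is all of $\Omega$ up to a null set; adjoining the null set gives the desired partition. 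The claim itself I would prove using the fact that the inclusion $X\hookrightarrow L_0(\mu)$ is continuous (norm convergence in a Banach lattice forces a.e.\ convergence along a subsequence, hence convergence in measure), so $B_X$ is bounded in measure on every finite-measure set; combined with the completeness of $X$ (to absorb a sliding-hump sequence of small-norm elements supported on a fixed set into a single element of $X$), this rules out the alternative that $\sup_{x\in B_X}\int_B\abs{x}\,d\mu=\infty$ for \emph{every} positive-measure $B$ inside some fixed finite-measure set.

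The main obstacle is exactly this last claim: producing, inside an arbitrary positive-measure set, a positive-measure subset over which all of $B_X$ is uniformly integrable. This is essentially the saturatedness of the K\"othe dual of a Banach function space over a $\sigma$-finite measure, and I expect the delicate part to be combining boundedness in measure of $B_X$ with the Banach-space structure in the right way; the rest of the argument (the normalization of $\mu$, the lattice-theoretic bookkeeping of the exhaustion, and the appeal to Proposition~\ref{wfi-strpos}) is routine. As an alternative packaging one could instead build a strictly positive weight $g\in X'$ directly — producing $g_A>0$ a.e.\ on each finite-measure piece $A$ with $\int\abs{x}g_A\,d\mu<\infty$ for all $x\in X$, then forming $g=\sum_n 2^{-n}g_{A_n}/(1+\norm{g_{A_n}}_{X'})$ — and check that $\phi(x)=\int xg\,d\mu$ is positive (hence bounded), strictly positive, and order continuous (for $x_\alpha\downarrow 0$ in $X$, pass to $L_0(\mu)$ by regularity and apply dominated convergence, using that $x_{\alpha_0}g\in L_1(\mu)$); but this runs into the same local sub-claim, so I would present the reduction to Proposition~\ref{wfi-strpos} as the cleaner route.
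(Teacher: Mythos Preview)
Your approach is correct in outline but takes a genuinely different route from the paper's. You aim to exhibit $X$ as a generalized K\"othe function space (Definition~\ref{wfi}) and then invoke Proposition~\ref{wfi-strpos}; the paper instead works abstractly in $X^\sim_{\rm oc}$: it uses Lozanovsky's Theorem \cite[Theorem~5.25]{Abramovich:02} to guarantee that $X^\sim_{\rm oc}$ separates points, shows via a simple measure-theoretic argument that any disjoint family of nonzero positive vectors in $L_0(\mu)$ is countable, deduces that a maximal disjoint family $\{f_n\}$ in $(X^\sim_{\rm oc})_+$ is countable, and then checks that $f=\sum 2^{-n}f_n/\norm{f_n}$ is a strictly positive weak unit of $X^\sim_{\rm oc}$. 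The paper even remarks that this ``functional-analytic'' route is deliberately chosen over the more ``function-theoretical'' proof of the equivalent \cite[Corollary~5.27]{Abramovich:02}; your proposal is squarely in the latter spirit.

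The substantive point is that your key claim---every positive-measure set contains a positive-measure $B\in\mathcal F$---is precisely the saturatedness of the K\"othe dual $X'$, and (after identifying $X^\sim_{\rm oc}$ with $X'$ for ideals of $L_0(\mu)$) this is equivalent to Lozanovsky's Theorem. So the paper \emph{cites} as a black box what you propose to \emph{prove}. Your sketch (continuity of $X\hookrightarrow L_0(\mu)$ giving boundedness in measure of $B_X$, then a sliding-hump argument using completeness) points in the right direction, but as you yourself flag, this step is the delicate one and is not fully carried out; filling it in amounts to reproving Lozanovsky. The trade-off: the paper's argument is shorter and cleaner once Lozanovsky is accepted, while yours is more self-contained and stays within the concrete function-space framework, at the cost of redoing a nontrivial theorem.
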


\begin{proof}
Observe first that every disjoint collection of nonzero vectors in $L_0(\mu)_+$ is at most countable. Indeed, let $D$ be such a collection. Write $\Omega=\bigcup_{n=1}^\infty \Omega_n$ where $\mu(\Omega_n)<\infty$ for each~$n$. Put $D_n=\bigl\{d\in D\mid d\wedge \chi_{\Omega_n}>0\bigr\}$. 
Since $D$ is disjoint, we have, for any distinct $d_1,\dots,d_k\in D_n$, 
\begin{displaymath}
  \sum_{i=1}^k\mu\bigl(\{t\mid d_i\wedge \chi_{\Omega_n}(t)>0\}\bigr)
  =\mu\Bigl(\bigcup_{i=1}^k\bigl\{t\mid d_i\wedge \chi_{\Omega_n}(t)>0\bigr\}\Bigr)
  \leq \mu(\Omega_n)<\infty.
\end{displaymath}
Thus, in view of the fact that
$\mu\big(\{t:d\wedge \chi_{\Omega_n}(t)>0\}\big)>0$ for each $
  d\in D_n$, it follows easily that $D_n$ is at most countable. Therefore, $D=\bigcup_1^\infty D_n$ is also at most countable.

Let $\Lambda$ be a maximal disjoint collection of non-zero positive functionals in $X^\sim_{\rm oc}$. We claim that $\Lambda$ is at most countable. Indeed, for any distinct $f,g\in\Lambda$, their carriers $C_f$ and $C_g$ are disjoint bands by Nakano's Theorem \cite[Theorem~1.67]{Aliprantis:06}. For every $f\in\Lambda$, pick $0<x_f\in C_f$. Then the collection
\begin{math}
  \bigl\{x_f: f\in\Lambda\bigr\}
\end{math}
is a disjoint collection in $X$ and, therefore, in $L_0(\mu)$; hence is at most countable by the preceding claim. It follows that $\Lambda$ is at most countable. 

By Lozanovsky's Theorem \cite[Theorem 5.25]{Abramovich:02}, $X^\sim_{\rm oc}$ separates the points of~$X$. It follows that $\Lambda\ne\varnothing$. Write $\Lambda=
\{f_n\}_{n\geq 1}$. Put $f=\sum_{n\geq 1}\frac{f_n}{2^n\norm{f_n}}$. Since $X^\sim_{\rm oc}$ is closed, $f\in X^\sim_{\rm oc}$. Since $\Lambda$ is maximal, it follows that $f$ is a weak unit of $X^\sim_{\rm oc}$. It is left to show that $f$ is strictly positive. Suppose not, then $f(x_0)=0$ for some $x_0>0$, so that for any $0<g\in X^\sim_{\rm oc}$ we have $g(x_0)=\lim_n(g\wedge nf)(x_0)=0$. This contradicts the fact that $X^\sim_{\rm oc}$ separates the points of~$X$.
\end{proof}

\begin{remark}
  Under the assumptions of Proposition~\ref{wfi-strpos1}, since $X^\sim_{\rm oc}$ separates the points of~$X$, it follows easily from Nakano's Theorem \cite[Theorem~1.67]{Aliprantis:06} that an order continuous functional is strictly positive iff it is a weak unit of~$X^\sim_{\rm oc}$. Thus,  Proposition~\ref{wfi-strpos1} is essentially equivalent to \cite[Corollary 5.27]{Abramovich:02}. While the proof of \cite[Corollary 5.27]{Abramovich:02} there is very function theoretical, our proof is more direct and functional analytic in nature.
\end{remark}

The following result now follows immediately from  Theorem~\ref{pre-komlos}, Propositions~\ref{wfi-strpos} and \ref{wfi-strpos1} and Proposition~\ref{wsF-uo-coml}. 

\begin{corollary}\label{Day}
Let $X$ be either a generalized K\"othe function space over a measure space,
or a Banach lattice which is an ideal of $L_0(\mu)$ for a $\sigma$-finite measure~$\mu$. Then $X$ has the pre-Koml\'os property. Moreover, $X$ has the Koml\'os property iff it has the weak $\sigma$-Fatou property.
\end{corollary}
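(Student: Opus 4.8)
The plan is to assemble the corollary from the pieces already in place; no genuinely new argument is needed, so the write-up is largely a matter of checking that the hypotheses of the cited results are met. I would first dispatch the pre-Komlós assertion. In case~one, $X$ is a generalized Köthe function space, and Proposition~\ref{wfi-strpos} furnishes a strictly positive order continuous functional on $X$; in case~two, $X$ is an ideal of $L_0(\mu)$ with $\mu$ $\sigma$-finite, and Proposition~\ref{wfi-strpos1} does the same. In either case Corollary~\ref{str-pos-pre-komlos} applies directly and yields the pre-Komlós property. (If one prefers to invoke Theorem~\ref{pre-komlos} itself, one first observes that the restriction of a strictly positive order continuous functional to a principal band remains strictly positive and order continuous, since a principal band is an ideal and hence a regular sublattice, so that every principal band of $X$ admits such a functional.)

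For the ``moreover'' clause I would chain two equivalences. By Theorem~\ref{pre-komlos}, under the hypothesis just verified, $X$ has the Komlós property if and only if it is sequentially boundedly uo-complete. By Proposition~\ref{wsF-uo-coml}, a Banach lattice that is a regular sublattice of $L_0(\mu)$ is sequentially boundedly uo-complete if and only if it has the weak $\sigma$-Fatou property. Both classes in the statement are regular sublattices of $L_0(\mu)$: this is built into Definition~\ref{wfi} for a generalized Köthe function space, and an ideal of $L_0(\mu)$ is automatically regular. Composing the two equivalences gives exactly the assertion that $X$ has the Komlós property if and only if it has the weak $\sigma$-Fatou property.

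There is no real obstacle here; the only thing to be careful about is that the hypotheses of the three cited results hold verbatim — in particular that a principal band inherits strict positivity and order continuity of a functional, and that both families of spaces are literally regular sublattices of some $L_0(\mu)$, so that Proposition~\ref{wsF-uo-coml} applies without modification. Both points are routine consequences of standard facts (ideals are regular sublattices; bands are ideals).
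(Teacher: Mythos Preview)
Your proposal is correct and follows essentially the same route as the paper: the paper's proof is a one-liner citing exactly Theorem~\ref{pre-komlos}, Propositions~\ref{wfi-strpos} and~\ref{wfi-strpos1}, and Proposition~\ref{wsF-uo-coml}, which is precisely the chain you assemble (your use of Corollary~\ref{str-pos-pre-komlos} in place of Theorem~\ref{pre-komlos} for the pre-Koml\'os part is an immaterial variation). The only additional content you supply---that both classes are regular sublattices of $L_0(\mu)$ and that a strictly positive order continuous functional restricts well to principal bands---is implicit in the paper's citations and is handled exactly as you describe.
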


In~\cite[Section~2 and Definition~3.2]{Day:10}, the authors consider so called \term{finitely integrable} and \term{weakly finitely integrable} Banach function spaces. Since every positive functional on a Banach lattice is bounded, it can be easily verified that every  finitely integrable space is a K\"othe space (namely, finite integrability equals local integrability); hence it is a generalized K\"othe space. 
Furthermore, every weakly finitely integrable space is also a generalized K\"othe space. Indeed, let $\Omega_n$'s and $w_n$'s be as in \cite[Definition~3.2]{Day:10}. Put $\Omega_n^1:=\bigl\{t\in \Omega_n\mid w_n(t)\geq 1\bigr\}$,
and for $k\geq 2$, put $\Omega_n^k:=\bigl\{t\in \Omega_n\mid\frac{1}{k}\leq w_n(t)<\frac{1}{k-1}\bigr\}$.
Then $(\Omega_n^k)_{n,k}$ is a countable partition of $\Omega$ into measurable sets (adding, if necessary, a set of measure $0$), and for each $n,k\geq 1$ and each $x\in X$, $\int_{\Omega_n^k}\abs{x}\mathrm{d}\mu\leq \int_{\Omega_n}\abs{x}kw_n\,d\mu<\infty$. Note that for a weakly finitely integrable Banach function space, the underlying measure has to be $\sigma$-finite.

It is clear that Theorem~3.1 and Corollary~3.3 in \cite{Day:10} follow from either case of Corollary~\ref{Day}.

\subsection{Koml\'os sets}

We now study the converse of the Koml\'os Theorem~\ref{kom}.
The following definition is inspired by \cite{Lennard:93}.

\begin{definition}A subset $C$ of a Banach lattice $X$ is called a \term{Koml\'os set} if for every sequence $(x_n)$ in $C$ there is a subsequence $(x_{n_k})$ of $(x_n)$ and $x\in C$ such that the Ces\`aro means of any subsequence of $(x_{n_k})$ uo-converge to $x$ in~$X$.
\end{definition}

\cite[Theorem~2.2]{Lennard:93} asserts that every convex Koml\'os set in $L_1(\mu)$ is norm bounded when $\mu$ is a $\sigma$-finite measure. This interesting property was later generalized to some other Banach function spaces in \cite{Day:10}. We now recover this result for more general Banach lattices.

Recall that a vector lattice $X$ has the \term{projection property} if every band in $X$ is a projection band. It is well known that every order complete vector lattice has the projection property.

\begin{theorem}\label{con-kom}
Let $X$ be a Banach lattice with the projection property. If $X^\sim_{\rm oc}$ is a norming subspace of $X^*$ then any convex Koml\'os set $C$ in $X$ is norm bounded.
\end{theorem}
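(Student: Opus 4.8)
Suppose, towards a contradiction, that the convex Koml\'os set $C$ is not norm bounded. The plan is to transport $C$ onto a convex, unbounded Koml\'os set inside $L_1(\mu)$ for some $\sigma$-finite measure $\mu$, thereby contradicting \cite[Theorem~2.2]{Lennard:93}. First I would use the norming hypothesis to find a single functional detecting the unboundedness: since $X^\sim_{\rm oc}$ is a norming subspace of $X^*$ (hence itself a Banach space, being a band in $X^\sim$), a subset of $X$ is norm bounded as soon as it is pointwise bounded on $X^\sim_{\rm oc}$; as $C$ is unbounded, the uniform boundedness principle produces $\psi\in X^\sim_{\rm oc}$ with $\sup_{x\in C}\abs{\psi(x)}=\infty$, and then $\varphi:=\abs\psi\in(X^\sim_{\rm oc})_+$ satisfies $\sup_{x\in C}\varphi(\abs x)=\infty$ because $\varphi(\abs x)\ge\abs{\psi(x)}$.

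The projection property now lets me make $\varphi$ strictly positive. Let $N_\varphi=\{x\in X:\varphi(\abs x)=0\}$; by Nakano's theorem \cite[Theorem~1.67]{Aliprantis:06} this is a band, so by the projection property $X=N_\varphi\oplus C_\varphi$ with $C_\varphi:=N_\varphi^{\mathrm d}$, and I let $P\colon X\to C_\varphi$ be the band projection. Then $P$ is a positive order continuous lattice homomorphism, and --- this is the key point --- it preserves uo-convergence: if $z_\alpha\xrightarrow{\rm uo}0$ in $X$ and $u\in(C_\varphi)_+$, then $\abs{Pz_\alpha}\wedge u=P(\abs{z_\alpha}\wedge u)$ lies in $[0,u]$ and order-converges to $0$ in $X$ by order continuity of $P$, hence also in $C_\varphi$ by Corollary~\ref{reg-obdd-twoway}; so $Pz_\alpha\xrightarrow{\rm uo}0$ in $C_\varphi$. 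Since $\varphi|_{C_\varphi}$ is strictly positive and order continuous, Remark~\ref{re-re} (via Theorem~\ref{repre_thm}) lets me regard $C_\varphi$ as a regular sublattice of some $L_1(\mu)$ with $\varphi(\abs y)=\norm{y}_{L_1(\mu)}$ for $y\in C_\varphi$.

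Set $C':=P(C)$. It is convex, and since $\varphi$ vanishes on $N_\varphi$ we have $\varphi(\abs{Px})=\varphi(\abs x)$ for every $x\in X$, so $C'$ is unbounded in the $L_1(\mu)$-norm. Moreover $C'$ is a Koml\'os set in $L_1(\mu)$: given a sequence $(y_n)$ in $C'$, pick $x_n\in C$ with $Px_n=y_n$; the Koml\'os property of $C$ gives a subsequence $(x_{n_k})$ and $x\in C$ whose subsequential Ces\`aro means uo-converge to $x$ in $X$, and applying the linear uo-preserving map $P$ shows that the corresponding Ces\`aro means of $(y_{n_k})$ uo-converge to $Px\in C'$ in $C_\varphi$, equivalently (as $C_\varphi$ is regular in $L_1(\mu)$, by Theorem~\ref{uo_regular} and Remark~\ref{uo-kothe}) $\mu$-a.e.\ to $Px$. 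Finally, choosing $y_n\in C'$ with $\norm{y_n}_{L_1(\mu)}\to\infty$ and letting $\Omega_0$ be the union of their supports --- a $\sigma$-finite set --- the band projection $f\mapsto f\chi_{\Omega_0}$ on $L_1(\mu)$ is again an order continuous lattice homomorphism, so the same reasoning carries $C'$ to a convex, unbounded Koml\'os set in $L_1(\Omega_0,\mu|_{\Omega_0})$ with $\mu|_{\Omega_0}$ $\sigma$-finite, contradicting \cite[Theorem~2.2]{Lennard:93}.

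The crux is the fact used twice in the last two paragraphs, namely that a band projection (which exists precisely because of the projection property) preserves uo-convergence and hence the Koml\'os property, together with the bookkeeping identifying uo-limits in $C_\varphi$ with $\mu$-a.e.\ limits; the uniform-boundedness step (the sole use of the norming hypothesis) and the passage to a $\sigma$-finite measure are routine.
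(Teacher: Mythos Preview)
Your proof is correct and follows essentially the same strategy as the paper's: band projections preserve the Koml\'os property, an AL-representation of the carrier of an order continuous functional lands you in some $L_1(\mu)$, and Lennard's theorem finishes the job. The only difference is organizational---the paper first passes to a principal band (securing a weak unit, hence a finite measure) and then checks that $x^*(C)$ is bounded for every $x^*\in X^\sim_{\rm oc}$, whereas you invoke the uniform boundedness principle at the outset to isolate a single bad functional $\varphi$, project once onto its carrier, and handle the possible lack of $\sigma$-finiteness with one more band projection inside $L_1(\mu)$.
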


\begin{proof}
Let $C$ be a convex Koml\'os set in~$X$. Suppose first that $X$ has a weak unit. Since $X^\sim_{\rm oc}$ is a norming subspace of~$X^*$, it suffices to show that $x^*(C)$ is bounded for every $x^* \in X^\sim_{\rm oc}$. Since $X^\sim_{\rm oc}$ is a band in~$X^*$, we may assume without loss of generality that $x^*>0$; otherwise, consider $x^*_+$ and~$x^*_-$.

Let $B$ be the carrier of~$x^*$. Then $B$ is a band in~$X$; let $P$ be the corresponding band projection. Note that $B$ has a weak unit, and the restriction $x_0^*$ of $x^*$ to $B$ is a strictly positive order continuous functional on~$B$. Let $\widetilde{B}$ be the AL-representation for $(B,x_0^*)$ as in Section~\ref{sec:AL}. By Remark~\ref{re-l-infty}, $\widetilde{B}=L_1(\mu)$ for some finite measure~$\mu$.  By \cite[Lemma~3.3]{GaoX:14}, if $x_n\xrightarrow{\rm uo}x$
in $X$ then $Px_n\xrightarrow{\rm uo}Px$. It follows that $P(C)$ is a
convex Koml\'os set in~$B$. Furthermore, $P(C)$ is a Koml\'os set in
$L_1(\mu)$ by Theorem~\ref{repre_thm}\eqref{repre_thmi4} and,
therefore, $P(C)$ is norm bounded in $L_1(\mu)$ by
\cite{Lennard:93}. Observe that
\begin{displaymath}
  \bigabs{x^*(x)}\le x^*\bigl(\abs{x}\bigr)=x^*\bigl(P\abs{x}\bigr)
  =x^*\bigl(\abs{Px}\bigr)=\norm{Px}_{L_1(\mu)}
\end{displaymath}
for every $x\in C$; this yields that $x^*(C)$ is bounded.

We now consider the general case. Suppose, for the sake of contradiction, that $C$ is not norm bounded in~$X$. Pick a sequence $(x_n)$ in $C$ such that $\sup_n\norm{x_n}=\infty$. Let $B$ be the band generated by $(x_n)$ and $P$ be the corresponding band projection. Then $B$ has the projection property and a weak unit, and $B^\sim_{\rm oc}$ is a norming subspace of~$B^*$. Observe that $P(C)$ is a Koml\'os set in $B$ by \cite[Lemma~3.3]{GaoX:14} again, and is therefore norm bounded by the preceding paragraph. This leads to a contradiction since $(x_n)\subset P(C)$.
\end{proof}

\begin{corollary}
Let $X$ be an order continuous Banach lattice or a dual Banach lattice. Then every convex Koml\'os set in $X$ is norm bounded.
\end{corollary}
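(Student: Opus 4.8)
The plan is to deduce the corollary directly from Theorem~\ref{con-kom}: in each of the two cases it suffices to check the two hypotheses of that theorem, namely that $X$ has the projection property and that $X^\sim_{\rm oc}$ is a norming subspace of~$X^*$.

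First I would treat the order continuous case. An order continuous Banach lattice is Dedekind complete — an increasing, order bounded net is norm convergent, and its norm limit is its supremum — hence it has the projection property (as recalled in the excerpt, every order complete vector lattice does). Moreover, order continuity of the norm forces \emph{every} bounded functional on $X$ to be order continuous, so $X^* = X^\sim = X^\sim_{\rm oc}$; in particular $X^\sim_{\rm oc}$ is trivially norming. Theorem~\ref{con-kom} then applies.

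For the dual case, write $X = Z^*$ for a Banach lattice~$Z$. The order dual of any Archimedean vector lattice is Dedekind complete, and for a Banach lattice the order dual coincides with the topological dual; hence $X = Z^*$ is Dedekind complete and again has the projection property. The canonical image of $Z$ in $Z^{**} = X^*$ consists of order continuous functionals on $Z^* = X$, since evaluation at a fixed point of $Z$ is order continuous on $Z^*$ (a standard fact about dual Banach lattices; see~\cite{Aliprantis:06}); thus $Z \subseteq X^\sim_{\rm oc}$. As $Z$ is a norming subspace of its own dual $Z^* = X$, the larger space $X^\sim_{\rm oc}$ is norming as well. Theorem~\ref{con-kom} again yields that every convex Koml\'os set in $X$ is norm bounded. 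There is no genuine obstacle here: the only step not settled by a one-line verification is the order continuity of evaluations at points of the predual, and that is a routine citation.
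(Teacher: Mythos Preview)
Your proposal is correct and follows exactly the route the paper intends: the corollary is stated without proof immediately after Theorem~\ref{con-kom}, and one simply verifies its two hypotheses (projection property and $X^\sim_{\rm oc}$ norming) in each case, which you do correctly. One small wording slip: in the dual case you write ``$Z$ is a norming subspace of its own dual $Z^*=X$'', but you mean $Z$, sitting inside $Z^{**}=X^*$, norms $X=Z^*$; the mathematics is fine.
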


Recall that a vector lattice $X$ has the \term{countable sup property}, if every subset in $X$ having a supremum contains a countable subset with the same supremum. 

\begin{proposition}
Let $(\Omega,\Sigma,\mu)$ be a $\sigma$-finite measure space and $X$ be a Banach lattice which is an ideal of $L_0(\mu)$. Suppose that $\norm{x_n}\uparrow \norm{x}$ whenever $0\leq x_n\uparrow x$ in~$X$. Then $X_{\rm oc}^\sim$ is a norming subspace of~$X^*$. In particular, every convex Koml\'os set is norm bounded.
\end{proposition}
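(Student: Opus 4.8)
The plan is to route the argument through the K\"othe duality of~$X$, using the hypothesis to show that $X$ embeds isometrically into its K\"othe bidual. First, two reductions. Since $\mu$ is $\sigma$-finite, $L_0(\mu)$ is order complete, hence so is its ideal~$X$; in particular $X$ has the projection property, which is needed for the last assertion. Replacing $(\Omega,\Sigma,\mu)$ by the support of~$X$, we may assume $X$ is order dense in $L_0(\mu)$. Write $\rho=\norm{\cdot}_X$, and let $\rho'$ be the associate norm on $L_0(\mu)$, $\rho'(g)=\sup\bigl\{\int_\Omega\abs{hg}\,d\mu:h\in X,\ \rho(h)\le1\bigr\}$, with $\rho''=(\rho')'$. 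If $\rho'(g)<\infty$, then $x\mapsto\int_\Omega xg\,d\mu$ is a well-defined order continuous functional on $X$ of norm at most $\rho'(g)$; consequently $\rho''\le\rho$ on~$X$, and the essential point is the reverse inequality $\rho\le\rho''$.

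For this I would introduce the \emph{Fatou extension} $\bar\rho$ of~$\rho$, defined on $L_0(\mu)_+$ by $\bar\rho(h)=\sup\{\rho(h'):h'\in X,\ 0\le h'\le h\}\in[0,\infty]$ and extended by $\bar\rho(h)=\bar\rho(\abs h)$. Using the Riesz decomposition property one checks that $\bar\rho$ is a function norm with carrier~$\Omega$; clearly $\bar\rho=\rho$ on~$X_+$; and, invoking the countable sup property of $L_0(\mu)$ together with the monotone convergence theorem, one verifies $\bar\rho'=\rho'$. The key fact is that $\bar\rho$ has the \emph{Fatou property}, and this is precisely where the hypothesis enters: if $0\le h_n\uparrow h$ in $L_0(\mu)_+$ and $h'\in X$ with $0\le h'\le h$, then $h'\wedge h_n\uparrow h'$ in~$X$, so $\rho(h'\wedge h_n)\uparrow\rho(h')$ by hypothesis; since $\rho(h'\wedge h_n)\le\bar\rho(h_n)$, this yields $\rho(h')\le\sup_n\bar\rho(h_n)$, and taking the supremum over such $h'$ gives $\bar\rho(h)\le\sup_n\bar\rho(h_n)$.

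Now the Lorentz--Luxemburg theorem (for function norms over a $\sigma$-finite measure having the Fatou property) yields $\bar\rho=\bar\rho''$; as $\bar\rho'=\rho'$ forces $\bar\rho''=\rho''$, we obtain $\rho(x)=\bar\rho(x)=\bar\rho''(x)=\rho''(x)$ for all $x\in X_+$, that is, $\norm{x}_X=\sup\bigl\{\int_\Omega\abs{x}g\,d\mu:g\in L_0(\mu)_+,\ \rho'(g)\le1\bigr\}$ for every $x\in X$. Given $x\in X$ and $\varepsilon>0$, pick such a $g$ with $\int_\Omega\abs{x}g\,d\mu>\norm{x}_X-\varepsilon$, and let $g_1\in L_0(\mu)$ equal $g$ on $\{x\ge0\}$ and $-g$ on $\{x<0\}$, so that $\abs{g_1}=g$; then $f\colon y\mapsto\int_\Omega yg_1\,d\mu$ lies in $X^\sim_{\rm oc}$, satisfies $\norm{f}_{X^*}\le\rho'(g)\le1$, and $\abs{f(x)}=\int_\Omega\abs{x}g\,d\mu>\norm{x}_X-\varepsilon$. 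Hence $X^\sim_{\rm oc}$ is a ($1$-)norming subspace of~$X^*$. Finally, since $X$ has the projection property and $X^\sim_{\rm oc}$ is norming, Theorem~\ref{con-kom} shows that every convex Koml\'os set in $X$ is norm bounded.

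The main obstacle is the inequality $\rho\le\rho''$, i.e.\ the Lorentz--Luxemburg step; the $\sigma$-finiteness of $\mu$ is genuinely used there (and to obtain the projection property and the integral representation of order continuous functionals). It is also worth stressing that the hypothesis only guarantees that $X$ embeds \emph{isometrically} into its K\"othe bidual, and not that $X$ coincides with it---$c_0$ is such an example---which is exactly why the proof is routed through the Fatou extension $\bar\rho$ rather than through an identity $X=X''$.
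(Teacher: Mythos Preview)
Your proof is correct, but it takes a different route from the paper's. The paper's argument is purely abstract: it observes that $L_0(\mu)$ (and hence $X$) has the countable sup property, so the sequential hypothesis upgrades to the full Fatou property $\norm{x_\alpha}\uparrow\norm{x}$ for nets; then, since $X$ is order complete and $X^\sim_{\rm oc}$ separates points (Lozanovsky), it invokes \cite[Theorem~2.4.21]{Meyer-Nieberg:91} directly to conclude that $X^\sim_{\rm oc}$ is norming. Your argument instead unwinds this through the concrete K\"othe duality: you build the Fatou extension $\bar\rho$ on $L_0(\mu)$, verify its Fatou property from the hypothesis, and apply the Lorentz--Luxemburg theorem to get $\bar\rho=\bar\rho''$, then match $\bar\rho'=\rho'$ to conclude $\rho=\rho''$ on $X$.

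What each approach buys: the paper's is much shorter and stays at the abstract Banach-lattice level, but depends on a black-box citation. Yours is more self-contained within function-space theory and yields the extra information that the norming functionals are exactly the integral functionals $y\mapsto\int yg\,d\mu$ with $\rho'(g)\le 1$ (in fact you get $1$-norming, which the abstract route also gives but less visibly). Your remark that the hypothesis only forces an isometric embedding into the K\"othe bidual rather than equality---hence the need for the Fatou extension $\bar\rho$---is a nice observation that the paper's compressed argument does not surface. One small point worth making explicit in your write-up: the step $\bar\rho'=\rho'$ genuinely needs the reduction to $X$ order dense in $L_0(\mu)$ (so that every $h\in L_0(\mu)_+$ is a pointwise supremum of elements of $X_+$, and then a countable one by the countable sup property), which you state but might want to justify in one line.
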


\begin{proof}
By \cite[Lemma 2.6.1]{Meyer-Nieberg:91}, $L_0(\mu)$ has the countable sup property. It follows that $X$ has the countable sup property.
Hence, from our assumption, it follows easily that $X$ satisfies the Fatou property in the sense of \cite[Definition~2.4.18]{Meyer-Nieberg:91}; namely, for any net $0\leq x_\alpha\uparrow x$ in~$X$, one has $\norm{x_\alpha}\uparrow \norm{x}$.
Recall also that $X_{\rm oc}^\sim$ separates the points of $X$
by Lozanovsky's Theorem \cite[Theorem 5.25]{Abramovich:02},
and note that $X$ is order complete. \cite[Theorem~2.4.21]{Meyer-Nieberg:91} implies that $X_{\rm oc}^\sim$ is norming. The last assertion follows from Theorem~\ref{con-kom}.
\end{proof}

This proposition includes and improves \cite[Theorems~4.1 and 4.2]{Day:10}.
Note that our Theorem~\ref{con-kom} applies to function spaces over non-$\sigma$-finite measure spaces. 
In particular, a convex Koml\'os set in $L_1(\mu)$ is norm bounded even when $\mu$ is not necessarily $\sigma$-finite.

We finish this section with two open problems.

\begin{problem}
  Let $X$ be a sequentially boundedly uo-complete Banach lattice. Does the pre-Koml\'os property imply the Koml\'os property on $X$?
\end{problem}

\begin{problem}
  Is there an unbounded convex Koml\'os set?
\end{problem}

\section{Banach-Saks properties}
\label{sec:BS}

Let $X$ be a Banach space. A sequence $(x_n)$ in $X$ is said to be \term{Ces\`aro convergent} if its Ces\`aro means converge in norm. We say that $X$ has the \term{Banach-Saks property} (\term{BSP}) if every bounded sequence has a Ces\`aro convergent subsequence. We say that $X$ has the \term{weak Banach-Saks property} (\term{WBSP}) if every weakly null sequence has a Ces\`aro convergent subsequence; in this case, it is easy to see that the Ces\`aro means of the subsequence converge to zero. Suppose now that $X$ is a Banach lattice.  We say that $X$ has the \term{disjoint Banach-Saks property} (\term{DBSP}) (respectively, \term{disjoint weak Banach-Saks property} (\term{DWBSP})) if every bounded (respectively, weakly null) disjoint sequence has a Ces\`aro convergent subsequence.

Various Banach-Saks properties have been extensively studied; see, e.g., ~\cite{Seifert:77,Dodds:04,Dodds:07,Astashkin:07,Kaminska:14,Beauzamy:79,Flores:06,Flores:08}. We will use the following classical result.

\begin{theorem}[\cite{Erdos:76}]\label{erdos}
  Every bounded sequence $(x_n)$ in a Banach space has a subsequence $(x_{n_k})$ such that either every further subsequence of $(x_{n_k})$ Ces\`aro converges to the same limit or every further subsequence of $(x_{n_k})$ Ces\`aro diverges.
\end{theorem}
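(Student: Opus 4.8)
The plan is to deduce the statement from the Galvin--Prikry Ramsey theorem for Borel subsets of $[\mathbb{N}]^\infty$, the space of infinite subsets of $\mathbb{N}$ with the topology inherited from $\{0,1\}^{\mathbb{N}}$: every Borel $\mathcal{A}\subseteq[\mathbb{N}]^\infty$ is \emph{Ramsey}, i.e.\ for each $H\in[\mathbb{N}]^\infty$ there is an infinite $H'\subseteq H$ with $[H']^\infty\subseteq\mathcal{A}$ or $[H']^\infty\cap\mathcal{A}=\varnothing$. First I would replace $X$ by the closed linear span of $\{x_n:n\in\mathbb{N}\}$, so that we may assume $X$ is separable; norm convergence of Ces\`aro means is unaffected. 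I then identify subsequences of $(x_n)$ and their further subsequences with infinite subsets of $\mathbb{N}$: for $M=\{m_1<m_2<\cdots\}$ write $C_k(M)=\frac1k\sum_{j=1}^k x_{m_j}$.

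Second, I would apply Galvin--Prikry to $\mathcal{A}:=\{M\in[\mathbb{N}]^\infty:(C_k(M))_k\text{ is norm-Cauchy}\}$. This set is Borel: for fixed $k,l$ the map $M\mapsto\norm{C_k(M)-C_l(M)}$ depends only on $m_1,\dots,m_{\max(k,l)}$ and is therefore continuous, so writing the Cauchy condition as $\forall\varepsilon\in\mathbb{Q}_{>0}\,\exists N\,\forall k,l\ge N$ of the clopen conditions $\norm{C_k(M)-C_l(M)}\le\varepsilon$ exhibits $\mathcal{A}$ as Borel. The theorem yields $H\in[\mathbb{N}]^\infty$ with either $[H]^\infty\cap\mathcal{A}=\varnothing$ or $[H]^\infty\subseteq\mathcal{A}$. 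In the first case the subsequence of $(x_n)$ indexed by $H$ has the property that every further subsequence Ces\`aro diverges, and we are done.

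Third, in the case $[H]^\infty\subseteq\mathcal{A}$ every $M\in[H]^\infty$ has a well-defined Ces\`aro limit $L(M):=\lim_k C_k(M)\in X$, and it remains to make this limit independent of $M$. Fix a countable dense set $\{y_j\}$ in $X$; for $q\in\mathbb{Q}_{>0}$ one checks that $L(M)$ lies in the closed ball $\overline{B}(y_j,q)$ if and only if $\forall\varepsilon\in\mathbb{Q}_{>0}\,\exists N\,\forall k\ge N:\norm{C_k(M)-y_j}\le q+\varepsilon$, so each $\mathcal{A}_{j,q}:=\{M\in[H]^\infty:L(M)\in\overline{B}(y_j,q)\}$ is Borel by the same reasoning. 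I would then apply Galvin--Prikry successively to an enumeration $\mathcal{A}_1,\mathcal{A}_2,\dots$ of these countably many sets, diagonalizing to obtain $H_\infty\in[H]^\infty$ such that $[H_\infty]^\infty$ decides each $\mathcal{A}_i$; the key point is that $L$ is insensitive to finite modifications of the index set, which is exactly what lets the diagonal set $H_\infty$ (contained in each of the successive sets up to a finite set) inherit all the decisions. Then for $M,N\in[H_\infty]^\infty$ and all $j,q$ we have $L(M)\in\overline{B}(y_j,q)\iff L(N)\in\overline{B}(y_j,q)$, and since such closed balls separate the points of $X$ we conclude $L(M)=L(N)$. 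Thus the subsequence indexed by $H_\infty$ has all its further subsequences Ces\`aro converging to this common value.

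The only real obstacles are organizational: verifying the Borel classifications carefully and arranging the countable diagonalization in the third step, in particular the interplay between the $\subseteq^{*}$-descending chain produced by the successive Galvin--Prikry applications and the finite-modification invariance of $L$. The functional-analytic content is light once the Ramsey theorem is in hand; alternatively one could reproduce the original combinatorial argument of Erd\H{o}s--Magidor, but invoking Galvin--Prikry is considerably shorter.
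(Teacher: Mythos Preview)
Your proof is correct. Note, however, that the paper does not prove this theorem at all; it is quoted without proof as a classical result of Erd\H{o}s and Magidor and then used as a black box (in the proof of Lemma~\ref{aobdd-BSP}). So there is no ``paper's proof'' to compare against.

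That said, your Galvin--Prikry approach is a clean and standard modern route to such Ces\`aro dichotomies. The Borel verifications are fine (the sets in question are at worst $\Pi^0_3$), and the diagonalization in the third step works precisely because Ces\`aro limits are invariant under finite modifications of the index set, so the decisions made along the $\subseteq^*$-descending chain $H_0\supseteq H_1\supseteq\cdots$ transfer to any $M\in[H_\infty]^\infty$ via $M\cap H_i$. The closed rational balls centred at a countable dense set do separate points, so the conclusion $L(M)=L(N)$ follows. The original argument in \cite{Erdos:76} is more self-contained combinatorics, whereas your proof isolates the Ramsey-theoretic core and would carry over verbatim to any regular summability method in place of Ces\`aro means.
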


In this section, we study some aspects of Banach-Saks properties for Banach lattices. The idea is to apply the pre-Koml\'os property established in Section~\ref{sec:komlos} to reduce the norm convergence of Ces\`aro means to an order property; namely, almost order boundedness. This approach also proves very efficient when dealing with domination problems of (weakly) Banach-Saks operators.

Recall that a subset $A$ of a Banach lattice $X$ is \term{almost order bounded} if for any $\varepsilon>0$ there exists $x\in X_+$
such that $A\subset [-x,x]+\varepsilon B_X$. It follows readily from the Riesz decomposition property that $A\subset [-x,x]+\varepsilon B_X$ if and only if $\sup_{a\in A}\Bignorm{\bigl(\abs{a}-x\bigr)^+}\leq\varepsilon$. Hence, if $A$ is almost order bounded, so is its convex solid hull. It is easy to see that a norm convergent sequence is almost order bounded. We will use the following fact.

\begin{theorem}[{\cite[Proposition~4.2]{GaoX:14}}]\label{uo-aobdd}
  In an order continuous Banach lattice, every almost order bounded uo-Cauchy net converges uo- and in norm to the same limit.
\end{theorem}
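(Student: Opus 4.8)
The plan is to split the proof into two stages: first show that the net $(x_\alpha)$ is norm Cauchy, hence norm convergent to some $x\in X$ by completeness of $X$, and then show that this same $x$ is also the uo-limit. Throughout, two standard facts will be used freely: an order continuous Banach lattice is Dedekind complete (so that the iterated $\inf$--$\sup$ description of order convergence in Remark~\ref{ocompl-oconv} is available), and in an order continuous Banach lattice order convergence implies norm convergence; recall also that uo-Cauchy coincides with order Cauchy for order bounded nets.

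\emph{Stage 1: $(x_\alpha)$ is norm Cauchy.} Fix $\varepsilon>0$ and, using almost order boundedness, pick $u\in X_+$ with $\sup_\alpha\norm{(\abs{x_\alpha}-u)^+}\le\varepsilon$. Apply the elementary lattice identities $\abs{z}=\abs{z}\wedge 2u+(\abs{z}-2u)^+$ and, from $\abs{x_\alpha-x_\beta}\le\abs{x_\alpha}+\abs{x_\beta}$, the inequality $(\abs{x_\alpha-x_\beta}-2u)^+\le(\abs{x_\alpha}-u)^++(\abs{x_\beta}-u)^+$, to $z=x_\alpha-x_\beta$; this gives $\norm{x_\alpha-x_\beta}\le\bignorm{\abs{x_\alpha-x_\beta}\wedge 2u}+2\varepsilon$. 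Since $(x_\alpha)$ is uo-Cauchy, the order bounded double net $\bigl(\abs{x_\alpha-x_\beta}\wedge 2u\bigr)$ is order null, hence norm null; so $\norm{x_\alpha-x_\beta}\le3\varepsilon$ for $\alpha,\beta$ large. As $\varepsilon>0$ was arbitrary, $(x_\alpha)$ is norm Cauchy, and $x_\alpha\to x$ in norm for some $x\in X$.

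\emph{Stage 2: $x_\alpha\xrightarrow{\rm uo}x$.} Fix $w\in X_+$. Since $(x_\alpha)$ is uo-Cauchy, Remark~\ref{ocompl-oconv} applied to the order bounded double net $\bigl(\abs{x_\gamma-x_\delta}\wedge w\bigr)$ yields $t_\alpha:=\sup_{\gamma,\delta\ge\alpha}\abs{x_\gamma-x_\delta}\wedge w\downarrow0$. For $\beta\ge\alpha$ and arbitrary $\delta\ge\alpha$, the triangle inequality together with $(a+b)\wedge w\le a\wedge w+b\wedge w$ for $a,b\ge0$ gives $\abs{x_\beta-x}\wedge w\le\abs{x_\beta-x_\delta}\wedge w+\abs{x_\delta-x}\le t_\alpha+\abs{x_\delta-x}$; passing to positive parts and then taking the infimum over $\delta\ge\alpha$ yields $\bigl(\abs{x_\beta-x}\wedge w-t_\alpha\bigr)^+\le\inf_{\delta\ge\alpha}\abs{x_\delta-x}$. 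The right-hand side has norm at most $\inf_{\delta\ge\alpha}\norm{x_\delta-x}=0$, so it is $0$; hence $\abs{x_\beta-x}\wedge w\le t_\alpha$ for every $\beta\ge\alpha$, and therefore $\sup_{\beta\ge\alpha}\abs{x_\beta-x}\wedge w\le t_\alpha\downarrow0$. By Remark~\ref{ocompl-oconv}, $\abs{x_\beta-x}\wedge w\xrightarrow{\rm o}0$. Since $w\in X_+$ was arbitrary, $x_\alpha\xrightarrow{\rm uo}x$, and by Stage~1 this $x$ is also the norm limit.

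The main point — and the only place where the order and norm structures genuinely interact — is the identity $\inf_{\delta\ge\alpha}\abs{x_\delta-x}=0$, which is what allows the norm convergence obtained in Stage~1 to be inserted into the order estimate of Stage~2; this is precisely where order continuity is essential. Everything else is routine lattice arithmetic plus the $\inf$--$\sup$ formula for order convergence, which is available because an order continuous Banach lattice is automatically Dedekind complete.
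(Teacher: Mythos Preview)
The paper does not give its own proof of this theorem; it is quoted as \cite[Proposition~4.2]{GaoX:14} and used as a black box, so there is nothing in the present paper to compare your argument against.

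Your two-stage argument is correct. Stage~1 is the standard reduction: almost order boundedness plus uo-Cauchy implies norm Cauchy via the decomposition $\abs{z}=\abs{z}\wedge 2u+(\abs{z}-2u)^+$ and order continuity of the norm. Stage~2 cleanly upgrades the norm limit to a uo-limit by squeezing $\abs{x_\beta-x}\wedge w$ between $0$ and $t_\alpha$, using the key identity $\inf_{\delta\ge\alpha}\abs{x_\delta-x}=0$ (which follows from $\bignorm{\inf_{\delta\ge\alpha}\abs{x_\delta-x}}\le\inf_{\delta\ge\alpha}\norm{x_\delta-x}=0$); Dedekind completeness of $X$ guarantees all the suprema and infima exist, and the diagonal $\{(\alpha,\alpha)\}$ is cofinal in the doubled index set, so $t_\alpha\downarrow 0$ indeed follows from Remark~\ref{ocompl-oconv}. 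One cosmetic remark: you invoke $(a+b)\wedge w\le a\wedge w+b\wedge w$, which is true in any vector lattice, but your displayed estimate only needs the weaker and immediate bound $(a+b)\wedge w\le a\wedge w+b$ (from $a\wedge w+b=(a+b)\wedge(w+b)\ge(a+b)\wedge w$), since the term $\abs{x_\delta-x}$ appears without the $\wedge w$.
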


Combining this Theorem with Corollary~\ref{kb}, we obtain the following useful lemma.

\begin{lemma}\label{aobdd-BSP}
Let $X$ be an order continuous Banach lattice and $(x_n)$ a bounded sequence in~$X$. Suppose that every subsequence of $(x_n)$ has a further subsequence whose Ces\`aro means are almost order bounded. Then there exist a subsequence $(x_{n_k})$ of $(x_n)$ and a vector $x\in X$ such that the Ces\`aro means of any subsequence of $(x_{n_k})$ converge uo- and in norm to~$x$.
\end{lemma}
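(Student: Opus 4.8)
The plan is to combine the pre-Koml\'os property from Corollary~\ref{kb} with the almost-order-boundedness hypothesis and Theorem~\ref{uo-aobdd}, and to use Erd\H{o}s' theorem (Theorem~\ref{erdos}) to upgrade ``uo-Cauchy'' to an actual limit that does not depend on the subsequence.

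First I would apply Corollary~\ref{kb}: since $X$ is order continuous, it has the pre-Koml\'os property, so there is a subsequence $(y_n)$ of $(x_n)$ such that the Ces\`aro means of \emph{any} subsequence of $(y_n)$ are uo-Cauchy in $X$. By the hypothesis, passing to a further subsequence $(x_{n_k})$ of $(y_n)$, we may arrange that the Ces\`aro means of $(x_{n_k})$ itself are almost order bounded. Now fix any subsequence $(z_j)$ of $(x_{n_k})$, and let $(s_m)$ denote its Ces\`aro means. Then $(s_m)$ is uo-Cauchy (being the Ces\`aro means of a subsequence of $(y_n)$). To invoke Theorem~\ref{uo-aobdd} I need $(s_m)$ to be almost order bounded, not just $(s_m)$ for the particular subsequence $(x_{n_k})$; the point is that the Ces\`aro means of a subsequence of $(x_{n_k})$ lie in the convex hull of the Ces\`aro means of $(x_{n_k})$, up to a tail argument, so almost order boundedness is inherited. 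More precisely, each $s_m = \frac1m\sum_{j=1}^m z_j$ is a convex combination of the $x_{n_k}$'s; writing partial averages in terms of the averages $a_\ell = \frac1\ell\sum_{i=1}^\ell x_{n_i}$ (Abel summation) and using that the convex solid hull of an almost order bounded set is almost order bounded, one sees that $\{s_m\}$ is almost order bounded. Then Theorem~\ref{uo-aobdd} gives that $(s_m)$ converges uo- and in norm to a common limit.

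Next I would show the limit does not depend on the chosen subsequence $(z_j)$. Apply Erd\H{o}s' theorem (Theorem~\ref{erdos}) to the bounded sequence $(x_{n_k})$: after passing to yet another subsequence, still denoted $(x_{n_k})$, either every further subsequence Ces\`aro converges to the same limit $x$, or every further subsequence Ces\`aro diverges. The second alternative is ruled out because we have just shown that the Ces\`aro means of every subsequence converge in norm. Hence the first alternative holds, so there is a single $x\in X$ such that the Ces\`aro means of every subsequence of $(x_{n_k})$ converge in norm to $x$; combined with the uo-convergence established above (whose limit must agree with the norm limit by uniqueness of limits), the Ces\`aro means of any subsequence of $(x_{n_k})$ converge uo- and in norm to~$x$.

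The step I expect to be the main obstacle is verifying that the Ces\`aro means of an arbitrary subsequence $(z_j)$ of $(x_{n_k})$ form an almost order bounded set, given only that the Ces\`aro means of $(x_{n_k})$ itself are almost order bounded. This requires an Abel/summation-by-parts manipulation expressing $\frac1m\sum_{j=1}^m z_j$ as a (possibly infinite, but effectively finite with a vanishing tail) convex combination of the averages $a_\ell=\frac1\ell\sum_{i=1}^\ell x_{n_i}$, together with the observation that the convex solid hull of an almost order bounded set is again almost order bounded; the $\varepsilon$-tail coming from norm-boundedness of $(x_n)$ has to be absorbed into the almost-order-bounded estimate. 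The remaining steps are direct appeals to Corollary~\ref{kb}, Theorem~\ref{uo-aobdd}, and Theorem~\ref{erdos}.
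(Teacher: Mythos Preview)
The gap you yourself flag as ``the main obstacle'' is indeed fatal to the argument as written. The Ces\`aro means of an arbitrary subsequence of $(x_{n_k})$ are \emph{not}, in general, in the convex (or convex solid) hull of the Ces\`aro means $a_\ell=\tfrac1\ell\sum_{i=1}^\ell x_{n_i}$, and Abel summation does not help: writing $x_{n_k}=ka_k-(k-1)a_{k-1}$ expresses $s_m$ as an \emph{affine} combination of the $a_\ell$'s with coefficients of size $k_j/m$, which blow up whenever the subsequence is sparse. A concrete counterexample in $L_1[0,1]$: set $g_k=k\chi_{[0,1/k]}$ and let $x_{2k-1}=g_k$, $x_{2k}=-g_k$. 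Then the Ces\`aro means of $(x_n)$ converge to $0$ in norm and are therefore almost order bounded, yet the Ces\`aro means of the subsequence $(x_{2k-1})=(g_k)$ are not uniformly integrable (each carries mass $\tfrac12$ on the interval $[0,1/(\ell+1)]$), hence not almost order bounded. So you cannot deduce almost order boundedness of the subsequence means from almost order boundedness of the full-sequence means.

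The paper's proof avoids this obstacle by reversing the order of the two main reductions. It applies Erd\H{o}s' dichotomy \emph{first} (and then the pre-Koml\'os property), and only afterwards invokes the hypothesis to produce \emph{one} further subsequence with almost order bounded, hence norm-convergent, Ces\`aro means; this single convergent subsequence already rules out the ``all diverge'' alternative. Once Erd\H{o}s guarantees that the Ces\`aro means of \emph{every} subsequence converge in norm to the same $x$, almost order boundedness of those means is automatic (norm-convergent sequences are almost order bounded), and combining this with the uo-Cauchy property and Theorem~\ref{uo-aobdd} yields the uo-convergence. In short: you tried to establish almost order boundedness directly and then use Erd\H{o}s only for uniqueness; the paper uses Erd\H{o}s to manufacture norm convergence, from which almost order boundedness comes for free. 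Your proof becomes correct if you apply Erd\H{o}s before extracting the almost order bounded subsequence and then re-invoke the hypothesis (which, crucially, applies to \emph{every} subsequence) after the Erd\H{o}s extraction.
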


\begin{proof}
 In view of Theorem~\ref{erdos}, by passing to a subsequence, we may assume without loss of generality that either the Ces\`aro means of every subsequence of $(x_n)$ converge to the same limit (denote it by $x$), or the Ces\`aro means of every subsequence of $(x_n)$ diverge. By Corollary~\ref{kb}, passing to a further subsequence of $(x_n)$, we may assume that the Ces\`aro means of every subsequence of $(x_n)$ are uo-Cauchy. By assumption, there exists a subsequence $(x_{n_k})$ of $(x_n)$ such that the Ces\`aro means of $(x_{n_k})$ are almost order bounded, and hence converge by Theorem~\ref{uo-aobdd}. It now follows from the first sentence that  the Ces\`aro means of every subsequence of $(x_n)$ converge to~$x$.

Let $(y_n)$ be a subsequence of $(x_{n_k})$; let $(s_m)$ be the sequence of the Ces\`aro means of $(y_n)$. It now follows from $s_m\to x$ that the sequence $(s_m)$ is almost order bounded. It also follows from the first part of the proof that $(s_m)$ is uo-Cauchy. Applying Theorem~\ref{uo-aobdd} again, we conclude that $s_m\xrightarrow{\rm uo}x$.
\end{proof}

As an immediate corollary, we obtain the following characterizations of the BSP and WBSP in order continuous Banach lattices.

\begin{theorem}\label{bsps}
For an order continuous Banach lattice~$X$, the following are equivalent.
\begin{enumerate}
\item\label{bspsi1} $X$ has the BSP (respectively, the WBSP).
\item\label{bspsi2} Every bounded (respectively, weakly null) sequence has a subsequence whose Ces\`aro means are almost order bounded.
\item\label{bspsi3} For every bounded (respectively, weakly null) sequence  $(x_n)$ in~$X$, there exist a subsequence $(x_{n_k})$ of $(x_n)$ and a vector $x\in X$ such that the Ces\`aro means of any subsequence of $(x_{n_k})$ are norm and uo-convergent to~$x$.
\end{enumerate}
\end{theorem}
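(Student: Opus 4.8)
The plan is to establish the cycle \eqref{bspsi1}$\Rightarrow$\eqref{bspsi2}$\Rightarrow$\eqref{bspsi3}$\Rightarrow$\eqref{bspsi1}, treating the two parenthetical variants in parallel. Note that the only properties of the relevant class of sequences that enter the argument are that it is closed under passing to subsequences and that its members are norm bounded; both hold for bounded sequences (for the BSP) and for weakly null sequences (for the WBSP).

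For \eqref{bspsi1}$\Rightarrow$\eqref{bspsi2}, given such a sequence $(x_n)$, the (weak) Banach-Saks property produces a Ces\`aro convergent subsequence; since a norm convergent sequence---here, the sequence of Ces\`aro means of that subsequence---is almost order bounded (as observed just before Theorem~\ref{uo-aobdd}), this subsequence witnesses \eqref{bspsi2}.

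The implication \eqref{bspsi2}$\Rightarrow$\eqref{bspsi3} is essentially a repackaging of Lemma~\ref{aobdd-BSP}. Fix a bounded (respectively, weakly null) sequence $(x_n)$. Every subsequence of $(x_n)$ is again of the same type, so by \eqref{bspsi2} it has a further subsequence whose Ces\`aro means are almost order bounded; thus the hypothesis of Lemma~\ref{aobdd-BSP} is satisfied, and the lemma yields a subsequence $(x_{n_k})$ and a vector $x\in X$ with the stated convergence property. (In the weakly null case one can additionally note that $x=0$, but this is not needed.) Finally, \eqref{bspsi3}$\Rightarrow$\eqref{bspsi1} is immediate: the subsequence $(x_{n_k})$ furnished by \eqref{bspsi3} has norm convergent Ces\`aro means, hence is a Ces\`aro convergent subsequence of $(x_n)$, which is exactly what the (weak) Banach-Saks property demands.

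I do not expect a real obstacle here: the weight of the argument is carried by Lemma~\ref{aobdd-BSP}, and behind it by Theorem~\ref{uo-aobdd} and Corollary~\ref{kb}. The one point that deserves a word of care is checking that condition \eqref{bspsi2}, which on its face only concerns the sequence $(x_n)$ itself, actually furnishes the ``every subsequence has a further subsequence'' hypothesis required by Lemma~\ref{aobdd-BSP}; this is exactly where the stability of the sequence class under taking subsequences is used.
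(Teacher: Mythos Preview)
Your proof is correct and matches the paper's approach: the theorem is stated there as an immediate corollary of Lemma~\ref{aobdd-BSP}, and your cycle \eqref{bspsi1}$\Rightarrow$\eqref{bspsi2}$\Rightarrow$\eqref{bspsi3}$\Rightarrow$\eqref{bspsi1} simply spells out the details the paper leaves implicit. In particular, your observation that \eqref{bspsi2} feeds into the ``every subsequence'' hypothesis of Lemma~\ref{aobdd-BSP} via closure of the sequence class under subsequences is exactly the point.
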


\begin{corollary}\label{psp_bsp}
A Banach lattice with the PSP has the WBSP.
\end{corollary}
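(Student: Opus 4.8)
The plan is to deduce Corollary~\ref{psp_bsp} from Theorem~\ref{bsps}, using the characterization of the PSP in terms of uo-convergence (Theorem~\ref{PSP_char}) together with Proposition~\ref{w-uo}. The first observation is that a Banach lattice $X$ with the PSP is automatically order continuous: indeed, a space with the PSP contains no copy of $c_0$ (a fact already invoked in the proof of Theorem~\ref{PSP_char}), hence is order continuous, so Theorem~\ref{bsps} applies. Thus it suffices to verify condition~\eqref{bspsi2} of Theorem~\ref{bsps} for the WBSP, i.e., to show that every weakly null sequence $(x_n)$ in $X$ has a subsequence whose Ces\`aro means are almost order bounded.

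So let $(x_n)$ be weakly null. The idea is to pass to a subsequence that is \emph{norm} null, which will trivially make the Ces\`aro means norm convergent (to $0$) and hence almost order bounded. First I would split into positive and negative parts: $x_n^{\pm}\xrightarrow{\rm w}0$ as well (the lattice operations are weakly sequentially continuous on order continuous lattices, or one can argue directly), so it is enough to handle a positive weakly null sequence. Since $X$ is order continuous with a weak unit on the principal band generated by the sequence, Proposition~\ref{oc-wu-str-pos} guarantees a strictly positive order continuous functional on that band, so Proposition~\ref{w-uo} gives a subsequence $(x_{n_k})$ with $x_{n_k}\xrightarrow{\rm uo}0$ in $X$. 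This subsequence is both uo-null and weakly null, so by the characterization in Theorem~\ref{PSP_char} (which is the PSP), $\norm{x_{n_k}}\to 0$.

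Having obtained a norm null subsequence, its Ces\`aro means converge to $0$ in norm and are therefore almost order bounded (a norm convergent sequence is almost order bounded, as noted in the text). Undoing the reduction to the positive case: applying the above to $(x_n^+)$ and then to $(x_n^-)$ along a common subsequence yields a subsequence of $(x_n)$ itself that is norm null, whose Ces\`aro means are almost order bounded. By Theorem~\ref{bsps}, \eqref{bspsi2}$\Rightarrow$\eqref{bspsi1}, $X$ has the WBSP.

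I do not expect any serious obstacle here; the only point requiring a little care is the reduction to positive sequences and the invocation of Proposition~\ref{w-uo}, whose hypothesis (``every principal band admits a strictly positive order continuous functional'') must be checked—but for an order continuous Banach lattice this is exactly Proposition~\ref{oc-wu-str-pos} applied to principal bands, which carry a weak unit. An alternative, slightly slicker route avoiding Theorem~\ref{bsps} altogether: argue directly that every weakly null sequence in a PSP space has a norm null subsequence (via the same uo-argument), whence every weakly null sequence trivially has a Ces\`aro convergent subsequence; this is perhaps the cleanest proof, and I would likely present it this way, mentioning the connection to Theorem~\ref{bsps} as a remark.
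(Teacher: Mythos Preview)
Your proposal contains a genuine gap. The reduction to positive sequences fails: it is \emph{not} true in general order continuous Banach lattices that $x_n\xrightarrow{\rm w}0$ implies $x_n^{\pm}\xrightarrow{\rm w}0$. The Rademacher functions $(r_n)$ in $L_1[0,1]$ (a PSP space) give a counterexample: they are weakly null, yet $r_n^+\xrightarrow{\rm w}\tfrac12\one$, not to $0$. The parenthetical ``lattice operations are weakly sequentially continuous on order continuous lattices'' is simply false outside the atomic case. For the same reason your ``slicker alternative'' also fails: $(r_n)$ is weakly null in $L_1[0,1]$ but has no norm null subsequence, since $\norm{r_n}_1=1$ for all~$n$. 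So one cannot, in general, extract a norm null subsequence from a weakly null sequence in a PSP space, and Proposition~\ref{w-uo} cannot be invoked without the positivity hypothesis you have not legitimately obtained.

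The paper's proof avoids this trap by not passing to positive parts at all. It quotes \cite[Theorem~3.14]{GaoX:14}, which says that in a space with the PSP every relatively weakly compact set is almost order bounded; hence a weakly null sequence $(x_n)$ is itself almost order bounded, and so are its Ces\`aro means (they lie in the convex solid hull). Theorem~\ref{bsps}\eqref{bspsi2} then yields the WBSP immediately. If you want to repair your argument along uo-lines, you would need a different bridge from ``weakly null'' to ``uo-null subsequence'' that does not go through positivity --- but no such bridge exists in general (again, the Rademacher sequence in $L_1$ has no uo-null subsequence).
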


\begin{proof}
It is known that a Banach lattice with the PSP has order continuous norm. Given a weakly null sequence, it is almost order bounded by \cite[Theorem~3.14]{GaoX:14}. So are its Ces\`aro means. Apply Theorem~\ref{bsps}.
\end{proof}

\begin{example}
  Let $X$ be a separable Lorentz space on $[0,\alpha)$ for $0<\alpha\leq\infty$. It was proved in \cite[Theorem~5.7(i)]{Dodds:04}
that $X$ has WBSP. The proof there started with the observation that every disjoint weakly null sequence in $X$ is norm null.
This clearly implies that $X$ has the DWBSP. It is then concluded in \cite{Dodds:04} that $X$ has the WBSP because of a sophisticated variant of the
subsequence splitting property established there. 

In fact, \cite[Theorem~5.7(i)]{Dodds:04} is a special case of Corollary~\ref{psp_bsp}, because the fact that  every disjoint weakly null sequence in $X$ is norm null is equivalent to the PSP by
\cite[Corollary~2.3.5]{Meyer-Nieberg:91}.
\end{example}

For the next two propositions, we need the following lemma, which is a variant of the well-known Kade\v c-Pe{\l}czy\'nski dichotomy;
cf. \cite[p.~38]{Lindenstrauss:79}.

\begin{lemma}\label{KP}
Let $X$ be an order continuous Banach lattice and $(x_n)$ a bounded sequence in~$X$. If $x_n\xrightarrow{\rm uo}0$ in~$X$, then there exist a subsequence $(x_{n_k})$ of $(x_n)$ and a disjoint sequence $(d_k)$ of $X$ such that $\norm{x_{n_k}-d_k}\rightarrow0$.
\end{lemma}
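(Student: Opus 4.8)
The plan is to combine the AL-representation machinery of Section~\ref{sec:AL} with the classical Kade\v c--Pe\l czy\'nski subsequence splitting argument, run in the concrete space $L_1(\mu)$. First I would reduce to a band containing the sequence: put $x_0=\sum_n \frac{\abs{x_n}}{2^n(1+\norm{x_n})}$ and let $B=B_{x_0}$ be the principal band it generates. Since $X$ is order continuous, $B$ is an order continuous Banach lattice with a weak unit, so by Proposition~\ref{oc-wu-str-pos} it carries a strictly positive order continuous functional, and by Remark~\ref{re-l-infty} we may identify $B$ with an order dense ideal of some $L_1(\mu)$ with $\mu$ a finite (in fact probability) measure, with $L_\infty(\mu)\subseteq B\subseteq L_1(\mu)$. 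By Theorem~\ref{repre_thm}\eqref{repre_thmi4} and Remark~\ref{uo-kothe}, $x_n\xrightarrow{\rm uo}0$ in $X$ forces $x_n\xrightarrow{\rm a.e.}0$ in $L_1(\mu)$; and $(x_n)$ remains norm bounded in $B$, hence in $L_1(\mu)$ after scaling (using that the inclusion $B\hookrightarrow L_1(\mu)$ is continuous since $B$ has a weak unit and carries the norm of its AL-completion's predecessor — more simply, $\norm{x_n}_{L_1(\mu)}=x_0^*(\abs{x_n})\le\norm{x_0^*}\cdot\norm{x_n}_B$).

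The core step is to produce the disjoint approximants \emph{inside $L_1(\mu)$}. Since $(x_n)$ is bounded in $L_1(\mu)$ and $x_n\to 0$ a.e., the sequence cannot be $L_1$-equi-integrable unless it is already norm null (in which case we take $d_k=0$); in general, by passing to a subsequence and using the Kade\v c--Pe\l czy\'nski dichotomy for $L_1$ (an Egorov-plus-gliding-hump argument: choose sets $A_k$ with $\mu(\Omega\setminus A_k)\to 0$ on which $x_n\to 0$ uniformly, then split $x_{n_k}=x_{n_k}\chi_{A_k}+x_{n_k}\chi_{\Omega\setminus A_k}$ and thin out so the ``bad'' parts have essentially disjoint supports), we obtain a subsequence $(x_{n_k})$ and a disjoint sequence $(\tilde d_k)$ in $L_1(\mu)$ with $\norm{x_{n_k}-\tilde d_k}_{L_1(\mu)}\to 0$. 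The standard way to package this cleanly: since $x_{n_k}\to 0$ a.e.\ and in measure, the measures $\abs{x_{n_k}}\,d\mu$ have a uniformly integrable ``core'' that vanishes in norm, so after a diagonal thinning one may take $\tilde d_k = x_{n_k}\chi_{E_k}$ for a disjoint family $(E_k)$ of measurable sets with $\norm{x_{n_k}\chi_{\Omega\setminus E_k}}_{L_1(\mu)}\to 0$.

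It remains to pull this back to $X$. Set $d_k := x_{n_k}\chi_{E_k}$, computed in $L_1(\mu)$; since $\abs{d_k}\le\abs{x_{n_k}}\in B$ and $B$ is an ideal of $L_1(\mu)$, we get $d_k\in B\subseteq X$, and the $d_k$ are disjoint in $L_1(\mu)$ hence disjoint in $B$ (disjointness is a lattice-theoretic relation preserved by the embedding), hence disjoint in $X$ (disjointness in a sublattice agrees with disjointness in the overlattice). The only genuine subtlety is that convergence $\norm{x_{n_k}-d_k}\to 0$ must hold \emph{in the $X$-norm}, not merely in $L_1(\mu)$; here I would use that $(x_{n_k}-d_k)$ is a sequence in $B$ with $\abs{x_{n_k}-d_k}=\abs{x_{n_k}}\chi_{\Omega\setminus E_k}\le\abs{x_{n_k}}$, so it is dominated by the fixed-boundedness data, and that $x_{n_k}-d_k\xrightarrow{\rm uo}0$ in $X$ (it is uo-null in $L_0(\mu)$, being a.e.-null, hence uo-null in $B$ and in $X$ by Remark~\ref{uo-kothe} and Theorem~\ref{uo_regular}). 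Now apply Theorem~\ref{uo-aobdd}: if one further arranges (again by a subsequence) that the sequence $(x_{n_k}-d_k)$ is almost order bounded in $X$ — which follows from $x_{n_k}-d_k=x_{n_k}\chi_{\Omega\setminus E_k}$ having small ``tail'' in the $L_1(\mu)$-norm, translated back through the almost-order-boundedness of $\{\abs{x_n}\}$... — then the uo-null almost-order-bounded sequence is norm null in $X$. \textbf{The main obstacle} is precisely this last point: transferring $L_1(\mu)$-norm smallness of the remainder into $X$-norm smallness, since the $X$-norm is strictly stronger. The cleanest fix is to build the splitting not in $L_1(\mu)$ but directly in $X$ via Egorov-type arguments on the $X$-valued side, using order continuity of $X$ to control the ``uniformly small on large sets'' part; alternatively, one invokes the subsequence splitting property for order continuous Banach lattices (e.g.\ \cite[p.~38]{Lindenstrauss:79} combined with order continuity), which is exactly the classical result this lemma restates — so the honest proof is short: the sequence is bounded and uo-null (hence, via the representation, a.e.-null and not ``rigid''), apply subsequence splitting to extract $d_k$ disjoint with $x_{n_k}-d_k\to 0$ in norm, and note the disjointness passes through the lattice embedding.
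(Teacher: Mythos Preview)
Your approach has a genuine gap at exactly the point you flag as ``the main obstacle'': having built $d_k=x_{n_k}\chi_{E_k}$ in $L_1(\mu)$, you need $\norm{x_{n_k}-d_k}_X\to 0$, but the $X$-norm is strictly stronger than the $L_1(\mu)$-norm and none of your proposed fixes closes this. Almost order boundedness of $(x_{n_k}-d_k)$ in $X$ does not follow from your construction --- the only domination you have is $\abs{x_{n_k}-d_k}\le\abs{x_{n_k}}$, and a norm-bounded uo-null sequence need not be almost order bounded (else every such sequence would be norm null by Theorem~\ref{uo-aobdd}, which is false). Your fallback, ``invoke the subsequence splitting property for order continuous Banach lattices,'' is not legitimate: that property is \emph{strictly stronger} than order continuity (see the discussion preceding Proposition~\ref{splitting}), and in any case the present lemma is a tool used to study such properties, so the appeal is circular. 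The Egorov idea you mention in passing is closer to a workable route, but as stated it is too vague to count as an argument.

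The paper's proof avoids the representation entirely and works directly in $X$. The key observation you are missing is that uo-nullity plus order continuity already give norm control \emph{on order-bounded truncations}: for every fixed $y\in X_+$, the sequence $\abs{x_n}\wedge y$ is order bounded and uo-null, hence order-null, hence $\bignorm{\abs{x_n}\wedge y}_X\to 0$. With this in hand one runs a gliding-hump argument in the $X$-norm itself: inductively choose $n_{k+1}$ so that $\abs{x_{n_{k+1}}}$ has small $X$-norm meet with $4^k\sum_{i\le k}\abs{x_{n_i}}+2^{-k}x$ (where $x=\sum 2^{-n}\abs{x_n}$), set $u_k=\bigl(\abs{x_{n_{k+1}}}-4^k\sum_{i\le k}\abs{x_{n_i}}-2^{-k}x\bigr)^+$, verify disjointness of the $u_k$ via \cite[Lemma~4.35]{Aliprantis:06}, and finally put $d_k=P_{u_k}x_{n_{k+1}}$ to handle signs. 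Since the estimates are all in the $X$-norm from the start, no transfer from a weaker norm is needed.
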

\begin{proof}
Put $x=\sum_{n=1}^\infty 2^{-n}\abs{x_n}$. Observe that $\bignorm{\abs{x_n}\wedge y}\rightarrow0$ for any $y\in X_+$. An easy induction argument yields a subsequence $(x_{n_k})$ of $(x_n)$ such that
$$\Bignorm{\abs{x_{n_{k+1}}}\wedge \Bigl(4^{{k}}\sum_{i=1}^{k}\abs{x_{n_i}}+2^{-{k}}x\Bigr)}<\frac{1}{k}.$$
for any $k\geq 1$.
Put $u_k=\bigl(\abs{x_{n_{k+1}}}-4^{{k}}\sum_{i=1}^{{k}}\abs{x_{n_i}}-2^{-{k}}x\bigr)^+$. 
Then $$\abs{x_{n_{k+1}}}-u_k=\abs{x_{n_{k+1}}}\wedge \Bigl(4^{{k}}\sum_{i=1}^{{k}}\abs{x_{n_i}}+2^{-{k}}x\Bigr)$$
and, therefore, $\Bignorm{\abs{x_{n_{k+1}}}-u_k}<\frac{1}{k}$.
By \cite[Lemma~4.35]{Aliprantis:06}, $u_k$'s are disjoint. So we are done if $(x_n)$ is a positive sequence.

For the general case, let $P_{u_{k}}$ be the band projection onto the band generated by~$u_k$. Put $d_k=P_{u_k}x_{n_{k+1}}$.
Then $d_k$'s are disjoint. Moreover,
\begin{multline*}
  \abs{x_{n_{k+1}}-d_k}=\bigabs{x_{n_{k+1}}-P_{u_k}x_{n_{k+1}}}
  =\abs{x_{n_{k+1}}}-P_{u_k}\abs{x_{n_{k+1}}}\\
  \le\abs{x_{n_{k+1}}}-\abs{x_{n_{k+1}}}\wedge u_k
  =\bigl(\abs{x_{n_{k+1}}}-u_k\bigr)^+.
\end{multline*}
It follows that
\begin{displaymath}
  \norm{x_{n_{k+1}}-d_k}\le\norm{x_{n_{k+1}}-u_k}\to 0.
\end{displaymath}
\end{proof}

This lemma allows us to replace disjoint sequences with uo-null sequences in the definition of the DBSP as follows.

\begin{proposition}\label{d-bsp}
For an order continuous Banach lattice~$X$, the following are equivalent.
\begin{enumerate}
\item\label{d-bspi1} $X$ has the DBSP,
\item\label{d-bspi11} Every bounded disjoint sequence has a subsequence whose Ces\`aro means are almost order bounded;
\item\label{d-bspi2} Every bounded uo-null sequence has a Ces\`aro convergent subsequence;
\item\label{d-bspi21} Every bounded uo-null sequence has a subsequence whose Ces\`aro means are almost order bounded.
\end{enumerate}
\end{proposition}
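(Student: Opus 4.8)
The plan is to establish the cycle of implications $\eqref{d-bspi1}\Rightarrow\eqref{d-bspi11}\Rightarrow\eqref{d-bspi21}\Rightarrow\eqref{d-bspi2}\Rightarrow\eqref{d-bspi1}$. Two of these are immediate. For $\eqref{d-bspi1}\Rightarrow\eqref{d-bspi11}$, given a bounded disjoint sequence, the DBSP produces a subsequence whose Ces\`aro means converge in norm, and a norm convergent sequence is almost order bounded. For $\eqref{d-bspi2}\Rightarrow\eqref{d-bspi1}$, a bounded disjoint sequence is uo-null by Corollary~\ref{dis}, so $\eqref{d-bspi2}$ hands us a Ces\`aro convergent subsequence, which is exactly the statement of the DBSP.

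Next I would prove $\eqref{d-bspi21}\Rightarrow\eqref{d-bspi2}$. Let $(x_n)$ be a bounded uo-null sequence. By $\eqref{d-bspi21}$, some subsequence $(x_{n_k})$ has Ces\`aro means $(s_m)$ that are almost order bounded. Since $(x_{n_k})$ is again uo-null, Corollary~\ref{uo-Cesaro-uo0} shows that $(s_m)$ is uo-null, hence uo-Cauchy; being also almost order bounded, Theorem~\ref{uo-aobdd} forces $(s_m)$ to converge uo- and in norm to the same limit, which is necessarily~$0$. Thus $(x_n)$ has a Ces\`aro convergent subsequence.

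The main step is $\eqref{d-bspi11}\Rightarrow\eqref{d-bspi21}$, and it is the least routine one, relying on the Kade\v c-Pe{\l}czy\'nski-type Lemma~\ref{KP}. Given a bounded uo-null sequence $(x_n)$, that lemma yields, after passing to a subsequence (which I rename $(x_n)$), a disjoint sequence $(d_n)$ with $\norm{x_n-d_n}\to 0$; as $(d_n)$ is then bounded, $\eqref{d-bspi11}$ supplies a further subsequence, to which I pass in both sequences simultaneously, so that the Ces\`aro means of $(d_n)$ are almost order bounded. Writing
$$\frac1m\sum_{i=1}^m x_i=\frac1m\sum_{i=1}^m d_i+\frac1m\sum_{i=1}^m\bigl(x_i-d_i\bigr),$$
the first summand runs over an almost order bounded set, while the second converges to~$0$ in norm (Ces\`aro means of a norm null sequence are norm null) and hence also runs over an almost order bounded set; since the sum of two almost order bounded sets is almost order bounded by the Riesz decomposition property, the Ces\`aro means of $(x_n)$ are almost order bounded. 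This gives $\eqref{d-bspi21}$ and closes the cycle.

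I expect $\eqref{d-bspi11}\Rightarrow\eqref{d-bspi21}$ to be the crux: its entire point is that the disjoint sequence extracted by the Kade\v c-Pe{\l}czy\'nski dichotomy stays norm-close to the original uo-null sequence, so that almost order boundedness of Ces\`aro means can be transported back and forth between the disjoint and the uo-null settings. The remaining implications are soft consequences of the Ces\`aro-stability of uo-convergence (Corollary~\ref{uo-Cesaro-uo0}), the fact that disjoint sequences are uo-null (Corollary~\ref{dis}), and the norm convergence of almost order bounded uo-Cauchy nets in order continuous Banach lattices (Theorem~\ref{uo-aobdd}).
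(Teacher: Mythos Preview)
Your proof is correct, but the organization differs from the paper's. The paper establishes the two pairs \eqref{d-bspi1}$\Leftrightarrow$\eqref{d-bspi11} and \eqref{d-bspi2}$\Leftrightarrow$\eqref{d-bspi21} simultaneously by invoking Lemma~\ref{aobdd-BSP} (which in turn rests on the Erd\H{o}s--Magidor Theorem~\ref{erdos}), and then closes the loop via \eqref{d-bspi2}$\Rightarrow$\eqref{d-bspi1} (Corollary~\ref{dis}) and \eqref{d-bspi1}$\Rightarrow$\eqref{d-bspi2}, the latter using Lemma~\ref{KP} to approximate the uo-null sequence by a disjoint one and then passing directly to Ces\`aro \emph{convergence}. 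You instead run a single cycle and, notably, bypass Lemma~\ref{aobdd-BSP} altogether: your \eqref{d-bspi21}$\Rightarrow$\eqref{d-bspi2} uses only Corollary~\ref{uo-Cesaro-uo0} and Theorem~\ref{uo-aobdd}, so the Erd\H{o}s--Magidor subsequence dichotomy is never needed. The Kade\v c--Pe{\l}czy\'nski step also enters at a different place---you use it for \eqref{d-bspi11}$\Rightarrow$\eqref{d-bspi21}, transferring \emph{almost order boundedness} of Ces\`aro means rather than their norm convergence. Your route is therefore a bit more self-contained for this particular proposition; the paper's route is more uniform, since Lemma~\ref{aobdd-BSP} is reused verbatim in Theorem~\ref{bsps} and Proposition~\ref{d-wbsp}. (One cosmetic remark: the sum of two almost order bounded sets is almost order bounded just from the definition; the Riesz decomposition property is not needed there.)
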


\begin{proof}
The equivalences \eqref{d-bspi1}$\Leftrightarrow$\eqref{d-bspi11} and \eqref{d-bspi2}$\Leftrightarrow$\eqref{d-bspi21} can be proved by applying  Lemma~\ref{aobdd-BSP}. The implication \eqref{d-bspi2}$\Rightarrow$\eqref{d-bspi1} follows from Corollary~\ref{dis}.
For \eqref{d-bspi1}$\Rightarrow$\eqref{d-bspi2}, let $(x_n)$ be a norm bounded uo-null sequence in~$X$. Then Lemma~\ref{KP} yields a subsequence $(x_{n_k})$ of $(x_n)$ and a disjoint sequence $(d_k)$ of $X$ such that $\norm{x_{n_k}-d_k}\rightarrow0$. By passing to a further subsequence, we may assume that $(d_k)$ is Ces\`aro convergent. The desired conclusion results from the following observation:
$$\Bignorm{\frac1m\sum_{i=1}^mx_{n_i}-\frac1m\sum_{i=1}^md_i}\leq \frac1m\sum_{i=1}^m\norm{x_{n_i}-d_i}\rightarrow0.$$
\end{proof}

Recall that by Corollary~\ref{uo-Cesaro-uo0}, the Ces\`aro means of any subsequences of a uo-null sequence in $X$ are also uo-null. The next result is an analogue of Proposition~\ref{d-bsp} for the DWBSP.

\begin{proposition}\label{d-wbsp}
For an order continuous Banach lattice, the following are equivalent.
\begin{enumerate}
\item\label{d-wbspi1} $X$ has the DWBSP;
\item\label{d-wbspi11} Every weakly null disjoint sequence has a subsequence whose Ces\`aro means are almost order bounded;
\item\label{d-wbspi2} Every weakly null and uo-null sequence has a Ces\`aro convergent subsequence;
\item\label{d-wbspi21} Every weakly null and uo-null sequence has a subsequence whose Ces\`aro means are almost order bounded;
\item\label{d-wbspi3} Every weakly null positive sequence has a Ces\`aro convergent subsequence;
\item\label{d-wbspi31} Every weakly null positive sequence has a subsequence whose Ces\`aro means are almost order bounded.
\end{enumerate}
\end{proposition}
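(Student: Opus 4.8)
The plan is to prove Proposition~\ref{d-wbsp} by establishing a cycle of implications, exploiting Lemma~\ref{aobdd-BSP} to trade ``Ces\`aro convergence'' for ``Ces\`aro means almost order bounded'', and Lemma~\ref{KP} together with Corollary~\ref{dis} to translate between disjoint sequences, uo-null sequences, and positive weakly null sequences. The equivalences \eqref{d-wbspi1}$\Leftrightarrow$\eqref{d-wbspi11}, \eqref{d-wbspi2}$\Leftrightarrow$\eqref{d-wbspi21}, and \eqref{d-wbspi3}$\Leftrightarrow$\eqref{d-wbspi31} are each immediate from Lemma~\ref{aobdd-BSP}, exactly as in the proof of Proposition~\ref{d-bsp}: one direction is trivial (a norm-convergent sequence is almost order bounded, so is its tail), and the other follows because almost order boundedness of the Ces\`aro means plus uo-Cauchyness (which holds after passing to a subsequence by Corollary~\ref{kb}, since $X$ is order continuous) forces norm convergence via Theorem~\ref{uo-aobdd}. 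Note that a disjoint sequence and a positive sequence are both uo-null — the former by Corollary~\ref{dis}, so these reductions are legitimate inputs to the uo-null statements.

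First I would close the loop among \eqref{d-wbspi1}, \eqref{d-wbspi2}, and \eqref{d-wbspi3}. The implications \eqref{d-wbspi2}$\Rightarrow$\eqref{d-wbspi1} and \eqref{d-wbspi2}$\Rightarrow$\eqref{d-wbspi3} are trivial, since disjoint weakly null sequences and positive weakly null sequences are uo-null (Corollary~\ref{dis} for the disjoint case; a positive weakly null sequence $(x_n)$ is uo-null because... wait — positivity alone does not give uo-nullity). Let me instead route \eqref{d-wbspi3}$\Rightarrow$\eqref{d-wbspi2}: given a weakly null uo-null sequence $(x_n)$, apply Lemma~\ref{KP} to obtain a subsequence $(x_{n_k})$ and a disjoint sequence $(d_k)$ with $\norm{x_{n_k}-d_k}\to 0$; then $(d_k)$ is also weakly null, and being disjoint it is positive-decomposable — actually one argues as in Proposition~\ref{d-bsp}: pass to a further subsequence making $(d_k)$ Ces\`aro convergent, which is possible provided \emph{disjoint} weakly null sequences Ces\`aro converge along a subsequence, i.e.\ \eqref{d-wbspi1}. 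So the clean cycle is \eqref{d-wbspi1}$\Rightarrow$\eqref{d-wbspi2}$\Rightarrow$\eqref{d-wbspi1} via Lemma~\ref{KP}, exactly mirroring Proposition~\ref{d-bsp}, and then \eqref{d-wbspi2}$\Rightarrow$\eqref{d-wbspi3} is trivial once one checks a weakly null positive sequence admits a uo-null subsequence.

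The step I expect to be the main obstacle is precisely \eqref{d-wbspi3}$\Rightarrow$\eqref{d-wbspi1} (equivalently, showing a positive weakly null sequence can be reduced to the disjoint/uo-null case). The resolution is Proposition~\ref{w-uo} (or Proposition~\ref{w-uo} combined with Remark~\ref{uo-ae}): since $X$ is order continuous, every principal band admits a strictly positive order continuous functional, so a positive weakly null sequence $0\le x_n\xrightarrow{\rm w}0$ has a subsequence $(x_{n_k})$ with $x_{n_k}\xrightarrow{\rm uo}0$. Thus any bounded weakly null \emph{disjoint} sequence, being positive after splitting into $\pm$ parts — actually a disjoint sequence need not be positive, but $|x_n|$ is disjoint, positive, and weakly null, hence has a uo-null (indeed it is already uo-null by Corollary~\ref{dis}) subsequence; one then transfers Ces\`aro convergence of $(|x_{n_k}|)$ back to $(x_{n_k})$ using disjointness and band projections as in Lemma~\ref{KP}'s proof. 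Assembling these: \eqref{d-wbspi1}$\Leftrightarrow$\eqref{d-wbspi11}, \eqref{d-wbspi2}$\Leftrightarrow$\eqref{d-wbspi21}, \eqref{d-wbspi3}$\Leftrightarrow$\eqref{d-wbspi31} by Lemma~\ref{aobdd-BSP}; \eqref{d-wbspi1}$\Rightarrow$\eqref{d-wbspi2} by Lemma~\ref{KP} and Corollary~\ref{dis}; \eqref{d-wbspi2}$\Rightarrow$\eqref{d-wbspi3} trivially (using Corollary~\ref{uo-Cesaro-uo0} to keep Ces\`aro means uo-null is not even needed here); and \eqref{d-wbspi3}$\Rightarrow$\eqref{d-wbspi1} by Proposition~\ref{w-uo} applied to $|d_n|$. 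This completes the cycle and hence the proof.
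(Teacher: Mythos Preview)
Your overall architecture is right and matches the paper's --- the three ``almost order bounded'' equivalences via Lemma~\ref{aobdd-BSP}, and \eqref{d-wbspi1}$\Leftrightarrow$\eqref{d-wbspi2} via Lemma~\ref{KP} and Corollary~\ref{dis} exactly as in Proposition~\ref{d-bsp}. But there is a genuine gap in closing the loop at \eqref{d-wbspi3}: you assert that if $(d_n)$ is disjoint and weakly null then $(\abs{d_n})$ is weakly null, and this is \emph{not automatic} --- lattice operations are not weakly sequentially continuous in a general order continuous Banach lattice. The paper handles this by proving \eqref{d-wbspi3}$\Rightarrow$\eqref{d-wbspi2} directly: for $(x_n)$ weakly null and uo-null it invokes \cite[Proposition~3.9]{GaoX:14} to conclude that $(\abs{x_n})$ is weakly null, applies \eqref{d-wbspi3} to obtain a subsequence with $\frac{1}{m}\sum_{k=1}^m\abs{x_{n_k}}\to 0$ (the limit is zero since the Ces\`aro means are themselves weakly null), and finishes with the plain inequality $\bignorm{\frac{1}{m}\sum_{k=1}^m x_{n_k}}\le\bignorm{\frac{1}{m}\sum_{k=1}^m\abs{x_{n_k}}}\to 0$. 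Your ``band projections'' transfer is unnecessary once you have this estimate. For your narrower route \eqref{d-wbspi3}$\Rightarrow$\eqref{d-wbspi1} one could alternatively cite \cite[Theorem~4.34]{Aliprantis:06}, which gives $\abs{d_n}\xrightarrow{\rm w}0$ for a disjoint relatively weakly compact sequence; but some such citation is needed.

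Your final summary also swaps two roles. Proposition~\ref{w-uo} is exactly the tool for \eqref{d-wbspi2}$\Rightarrow$\eqref{d-wbspi3} (a positive weakly null sequence admits a uo-null subsequence, to which \eqref{d-wbspi2} then applies), so that implication is \emph{not} ``trivial''; whereas Proposition~\ref{w-uo} plays no role in \eqref{d-wbspi3}$\Rightarrow$\eqref{d-wbspi1}. You had this right in your middle paragraph and then reversed it in the summary.
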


\begin{proof}The equivalences \eqref{d-wbspi1}$\Leftrightarrow$\eqref{d-wbspi11}, \eqref{d-wbspi2}$\Leftrightarrow$\eqref{d-wbspi21}, and \eqref{d-wbspi3}$\Leftrightarrow$\eqref{d-wbspi31} 
follow from Lemma~\ref{aobdd-BSP}. The equivalence \eqref{d-wbspi1}$\Leftrightarrow$\eqref{d-wbspi2} can be proved in a similar fashion as in Proposition~\ref{d-bsp}. The implication \eqref{d-wbspi2}$\Rightarrow$\eqref{d-wbspi3} follows from Proposition~\ref{w-uo}. For \eqref{d-wbspi3}$\Rightarrow$\eqref{d-wbspi2}, let $(x_n)$ be a weakly null and uo-null sequence in~$X$. By \cite[Proposition~3.9]{GaoX:14}, $(\abs{x_n})$ is also weakly null. Hence, a subsequence $(\abs{x_{n_k}})$ is Ces\`aro convergent. Note that the limit must be~$0$. Finally, observe that $$\Bignorm{\frac1m\sum_{k=1}^mx_{n_k}}\leq \Bignorm{\frac1m\sum_{k=1}^m\abs{x_{n_k}}}\rightarrow0.$$
\end{proof}

\subsection{Relations between various types of Banach-Saks properties}
For a Banach lattice, the following diagram is obvious:
$$\xymatrix@R=0.5cm{
                &         \mbox{WBSP}  \ar@2{->}[dr]&    \\
 \mbox{BSP}\ar@2{->}[ur]  \ar@2{->}[dr]&    &  \mbox{DWBSP}           \\
                &        \mbox{DBSP}  \ar@2{->}[ur]    &          }
$$
We claim that, in general, none of the reverse implications hold, and the WBSP and the DBSP do not imply each other. It follows from James' Theorem 
\cite[Theorem~3.55]{Fabian:01} that Banach spaces with the BSP are reflexive (because BSP implies that every functional attains its norm on the unit ball).
Hence, Banach spaces with the BSP are just the reflexive spaces with the WBSP. Baernstein (\cite{Baernstein:72}) constructed a reflexive Banach lattice which fails the WBSP.

It is easy to see that $\ell_1$ fails the DBSP (and, therefore, the BSP); yet Corollary~\ref{psp_bsp} yields that $\ell_1$ has the WBSP (and, therefore, the DWBSP). This tells us that WBSP$\not\Rightarrow$BSP, WBSP$\not\Rightarrow$DBSP and DWBSP$\not\Rightarrow$DBSP. Being non-reflexive, $c_0$ fails the BSP. However, it is easy to see that $c_0$ has the DBSP. This yields  DBSP$\not\Rightarrow$BSP. The following example is an order continuous Banach lattice showing DBSP$\not\Rightarrow$WBSP and DWBSP$\not\Rightarrow$WBSP.

\begin{example}
Consider the space $L_p(c_0)=L_p\bigl([0,1];c_0\bigr)$, where $1<p<\infty$. By \cite[Theorem~5.1]{Dodds:04}, $L_p(c_0)$ fails the WBSP. We will show that it has the DBSP. For any $x\in L_p(c_0)$ and $\omega\in[0,1]$, write $x(\omega)=(x^1(\omega),x^2(\omega),\cdots)\in c_0$ a.e. Put $x^*(\omega)=\sup_m\abs{x^m(\omega)}$. Then $x^*\in L_p$ and $\norm{x}=\norm{x^*}_{L_p}$.

Let $(x_{i})_{i=1}^n$ be a disjoint \emph{positive} sequence in $L_p(c_0)$. We can decompose $[0,1]$ into pariwise disjoint sets $A_i$'s such that $\Omega=\bigcup_{i=1}^n A_i$ and that $x_i^* \geq x_j^*$ on $A_i$ for any $j \neq i$. Due to the disjointness of $x_i$'s, we have
$x_{1}+\dots+x_{n}=x_{1}\vee\dots\vee x_{n}$. Therefore,
\begin{multline*}
  \bigl(x_1+\dots+x_n\bigr)^*(\omega)
  =\bigl(x_1\vee\dots\vee x_n\bigr)^*(\omega)\\
  =\bigl(x_1\chi_{A_1} \vee \cdots\vee x_n\chi_{A_n}\bigr)^*(\omega)
  =\bigl(x_1\chi_{A_1} +\dots + x_n\chi_{A_n}\bigr)^*(\omega).
\end{multline*}
Since $x_{1}\chi_{A_1},\dots,x_{n}\chi_{A_n}$ have disjoint supports, we conclude that
\begin{multline*}
  \bignorm{x_{1}+\dots+x_{n}}
  =\bignorm{x_{1}\chi_{A_1} +\dots+x_{n}\chi_{A_n}}
  =\bigl(\norm{x_{1}\chi_{A_1}}^p+\dots+\norm{x_{n}\chi_{A_n}}^p\bigr)^{\frac1p}\\
  \leq\bigl(\norm{x_1}^p+\dots+\norm{x_n}^p\bigr)^{\frac{1}{p}}
  \leq n^{\frac{1}{p}}\max_i\norm{x_i} .
\end{multline*}
It follows that for any disjoint sequence $(x_{i})_{i=1}^n$ in $L_p(c_0)$, we have
\begin{displaymath}
  \bignorm{x_1+...+x_n}
  =\bignorm{\abs{x_1}+...+\abs{x_n}}
  \leq n^{\frac{1}{p}}\max_i\norm{x_i}
\end{displaymath}
In particular, $L_p(c_0)$ has the DBSP.

Observe that the above computation actually shows that $L_p(c_0)$ satisfies an upper $p$-estimate. Note also that in this example, $c_0$ may be replaced with any AM-space.
\end{example}

Recall that an order continuous Banach lattice $X$ is said to have \term{the subsequence splitting property} if for any norm bounded
sequence $(x_n)$ there exist a subsequence $(x_{n_k})$ of $(x_n)$ and two sequences $(y_k)$ and $(z_k)$ such that
$x_{n_k}=y_k+z_k$, $(y_k)$ is almost order bounded%
\footnote{In literature, in the definition of the subsequence splitting property, $(y_k)$ is required to be L-weakly compact.
However, a bounded subset of an order continuous Banach lattice is L-weakly compact if and only if it is almost order
bounded by \cite[Proposition~3.6.2]{Meyer-Nieberg:91}.}, $(z_k)$ is pairwise disjoint and $y_k\perp z_k$ for all~$k$.

\begin{remark}\label{SSP-w-null}
  Note that if the sequence $(x_n)$ in the preceding definition is weakly null, the sequences $(y_k)$ and $(z_k)$ are weakly null as well. Indeed, being almost order bounded, the sequence $(y_k)$ is relatively weakly compact. It follows that $(z_k)$ is relatively weakly compact.  Since $(z_k)$ is disjoint, it follows from \cite[Theorem~4.34]{Aliprantis:06} that both $(z_k)$ and $\bigl(\abs{z_k})$ are weakly null. It follows that $(y_k)$ is weakly null as well.
\end{remark}

The following result was obtained in \cite[Section~3]{Flores:06}. We now give an alternative proof of this result using the Koml\'os property technique. Note that this result implies Theorem~\ref{BS} because $L_p(\mu)$ has the subsequence splitting property and is easily seen to have the DWBSP (and even the DBSP when $p>1$).

\begin{proposition}[\cite{Flores:06}]\label{splitting}
Let $X$ be a Banach lattice with the subsequence splitting property.
\begin{enumerate}
\item\label{SSP-DBSP} If $X$ has the DBSP then it has the BSP.
\item\label{SSP-DWBSP} If $X$ has the DWBSP then it has the WBSP.
\end{enumerate}
\end{proposition}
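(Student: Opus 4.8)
The plan is to deduce both assertions from Theorem~\ref{bsps}. Since the subsequence splitting property is only defined for order continuous Banach lattices, $X$ is order continuous, and Theorem~\ref{bsps} then reduces the BSP (respectively, the WBSP) to the statement that every bounded (respectively, weakly null) sequence has a subsequence whose Ces\`aro means form an almost order bounded set. So it suffices to produce, from a given bounded (respectively, weakly null) sequence, such a subsequence.

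First I would fix a bounded (respectively, weakly null) sequence $(x_n)$ in $X$ and apply the subsequence splitting property to pass to a subsequence, still denoted $(x_n)$, admitting a splitting $x_n=y_n+z_n$ with $\{y_n\}$ almost order bounded, $(z_n)$ pairwise disjoint, and $y_n\perp z_n$ for every $n$. From $y_n\perp z_n$ one gets $\abs{x_n}=\abs{y_n}+\abs{z_n}\ge\abs{z_n}$, so $(z_n)$ is bounded; in the weakly null case, Remark~\ref{SSP-w-null} moreover guarantees that $(z_n)$ is weakly null. Thus in case~\eqref{SSP-DBSP} the bounded disjoint sequence $(z_n)$, and in case~\eqref{SSP-DWBSP} the weakly null disjoint sequence $(z_n)$, has a Ces\`aro convergent subsequence by the DBSP, respectively the DWBSP. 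I would extract this subsequence and pass to the corresponding subsequences of $(x_n)$, $(y_n)$, $(z_n)$, keeping the same notation.

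It then remains to check that the Ces\`aro means of $(x_n)$ along this subsequence are almost order bounded. The $m$-th such Ces\`aro mean is $\frac1m\sum_{i=1}^m y_i+\frac1m\sum_{i=1}^m z_i$. The sequence of terms $\frac1m\sum_{i=1}^m z_i$ is norm convergent by construction, hence forms an almost order bounded set, since a norm convergent sequence is almost order bounded. The terms $\frac1m\sum_{i=1}^m y_i$ lie in the convex hull of $\{y_n\}$, which is almost order bounded because the convex solid hull of an almost order bounded set is almost order bounded. Finally, the set of Ces\`aro means of $(x_n)$ is contained in the (setwise) sum of these two almost order bounded sets, which is again almost order bounded directly from the definition. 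Theorem~\ref{bsps} would then apply, in its BSP form for~\eqref{SSP-DBSP} and in its WBSP form for~\eqref{SSP-DWBSP}, completing the proof.

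I do not anticipate a genuine obstacle here: the only delicate point is the order in which the subsequences are extracted — the Ces\`aro convergent subsequence of the disjoint part must be chosen first, and one then uses that restricting the almost order bounded sequence $(y_n)$ to any further subsequence preserves both its almost order boundedness and that of its Ces\`aro means. All the substantive work has already been absorbed into Theorem~\ref{bsps} (through the pre-Koml\'os property of order continuous Banach lattices together with Theorem~\ref{uo-aobdd}), so the argument above is essentially bookkeeping.
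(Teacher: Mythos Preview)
Your proof is correct and follows essentially the same route as the paper's: split via the subsequence splitting property, use the DBSP/DWBSP on the disjoint part, and handle the almost order bounded part via the pre-Koml\'os machinery. The only minor difference is that the paper first invokes Lemma~\ref{aobdd-BSP} on $(y_n)$ to arrange that \emph{every} further subsequence of $(y_n)$ is Ces\`aro convergent (so that a subsequent extraction for $(z_n)$ does no damage), and then concludes norm convergence of the Ces\`aro means of $(x_n)$ directly; you instead extract only once (for $(z_n)$), observe that the Ces\`aro means of the restricted $(y_n)$ stay almost order bounded as convex combinations, and feed the conclusion through Theorem~\ref{bsps}. Your variant saves one extraction and is perhaps slightly cleaner, but the substance is identical.
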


\begin{proof}
We only prove \eqref{SSP-DWBSP} here; the proof of \eqref{SSP-DBSP} is similar. Let $(x_n)$ be a weakly null sequence in~$X$. Passing to a subsequence, we assume that $x_n=y_n+z_n$, where $(y_n) $ is almost order bounded, $(z_n) $ is disjoint, and both $(y_k)$ and $(z_k)$ are weakly null. Passing to a further subsequence, we may assume that every subsequence of $(y_n)$ is Ces\`aro convergent by Lemma~\ref{aobdd-BSP}. Since $X$ has the DWBSP, passing to a further subsequence, we may assume that $(z_n)$ is Ces\`aro convergent. It follows that $(x_n)$ is Ces\`aro convergent.
\end{proof}

We also have the following positive result which applies to sequence spaces.

\begin{proposition}\label{atomic-DWBSP}
For an order continuous atomic Banach lattice, the DWBSP implies the WBSP.
\end{proposition}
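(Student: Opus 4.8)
The plan is to show that in an order continuous atomic Banach lattice every weakly null sequence is automatically uo-null; the conclusion then follows at once from the equivalence \eqref{d-wbspi1}$\Leftrightarrow$\eqref{d-wbspi2} of Proposition~\ref{d-wbsp}.

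First I would set up coordinates via the atoms. Let $(e_i)_{i\in I}$ be a maximal disjoint family of atoms of $X$. Each $e_i$ generates a one-dimensional projection band; let $P_i$ be the corresponding band projection and write $P_ix=e_i^*(x)\,e_i$, which defines a positive functional $e_i^*$ on $X$. Band projections are lattice homomorphisms and are order continuous, so $x\mapsto\bigl(e_i^*(x)\bigr)_{i\in I}$ is an order continuous lattice homomorphism of $X$ into $\mathbb R^I$; it is injective since $(e_i)$ is maximal disjoint (if $P_ix=0$ for every $i$ then $x$ is disjoint from every atom, hence $x=0$). Identifying $\mathbb R^I$ with $L_0$ of the counting measure on $I$, this represents $X$ as a sublattice of $L_0$, and order continuity of each $e_i^*$ gives, via Lemma~\ref{regular}, that this sublattice is \emph{regular}. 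Hence, by Remark~\ref{uo-kothe}, a sequence in $X$ is uo-null in $X$ if and only if it converges to $0$ coordinatewise.

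Now let $(x_n)$ be weakly null in $X$. Since each $e_i^*$ is a positive, hence bounded, functional, $e_i^*(x_n)\to 0$ for every $i$, so $(x_n)$ is coordinatewise null and therefore $x_n\xrightarrow{\rm uo}0$ in $X$. Assuming $X$ has the DWBSP, Proposition~\ref{d-wbsp} shows that every weakly null and uo-null sequence has a Ces\`aro convergent subsequence; combined with the previous sentence, every weakly null sequence in $X$ has a Ces\`aro convergent subsequence, i.e.\ $X$ has the WBSP. The only real content is the passage from weak nullity to uo-nullity; the step to be careful about is the verification that $X$ sits in $L_0$ \emph{regularly} (not merely as a sublattice), which is exactly where order continuity of the coordinate functionals $e_i^*$ is used.
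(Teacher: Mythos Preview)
Your argument is correct and coincides with the \emph{alternative} proof the paper records immediately after the proposition: the paper notes that Proposition~\ref{atomic-DWBSP} follows from Proposition~\ref{d-wbsp}\eqref{d-wbspi2} together with the lemma that every weakly null sequence in an atomic order continuous Banach lattice is uo-null, which is exactly your plan. Your justification of that lemma (embedding $X$ regularly into $\mathbb R^I$ via biorthogonal functionals of a maximal disjoint family of atoms, so that uo-null equals coordinatewise null, and observing that weak nullity forces coordinatewise nullity) is sound; the paper proves the same lemma by a short contradiction argument using \cite[Lemma~1.2]{Gao:14} to find an atom below $\inf_k\abs{x_{n_k}}$.

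For completeness, the paper's \emph{primary} proof is genuinely different: it uses the classical fact that in an atomic order continuous Banach lattice the lattice operations are sequentially weakly continuous, so $\abs{x_n}\xrightarrow{\rm w}0$, and then appeals to Proposition~\ref{d-wbsp}\eqref{d-wbspi31} and Theorem~\ref{bsps} (almost order boundedness of Ces\`aro means). That route avoids the explicit coordinate representation but invokes weak continuity of the modulus, whereas your route (and the paper's alternative) trades that for the more elementary observation that each biorthogonal functional is bounded.
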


\begin{proof}
  Let $(x_n)$ be a weakly null sequence. It is known that in atomic order continuous Banach lattices, the lattice operations are weakly continuous. It follows that $\abs{x_n}\xrightarrow{\rm w}0$. Passing to a subsequence, we may assume by Proposition~\ref{d-wbsp}\eqref{d-wbspi31} that the Ces\`aro means of $\bigl(\abs{x_n}\bigr)$ are almost order bounded. This yields that the Ces\`aro means of $(x_n)$ are almost order bounded. The result now follows from Theorem~\ref{bsps}.
\end{proof}

Alternatively, Proposition~\ref{atomic-DWBSP} follows immediately from Theorem~\ref{d-wbsp}\eqref{d-wbspi2} and the following lemma (cf. \cite[Theorem~1]{Wickstead:77}).

\begin{lemma}
  Every weakly null sequence in an atomic order continuous Banach lattice is uo-null.
\end{lemma}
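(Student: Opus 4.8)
The plan is to represent $X$ concretely as a regular sublattice of a power of the reals and then read off uo-convergence coordinatewise. Since $X$ is an order continuous Banach lattice it is order complete and hence has the projection property. Let $A$ be a maximal disjoint family of atoms of $X$; atomicity means precisely that the band generated by $A$ is all of~$X$. For each $a\in A$, the band generated by $a$ equals $\mathbb{R}a$ and is a projection band; let $P_a$ be the corresponding band projection and define $\phi_a\colon X\to\mathbb{R}$ by $P_ax=\phi_a(x)\,a$. Each $\phi_a$ is a positive functional, hence norm bounded, and it is order continuous because band projections are. Let $J\colon X\to\mathbb{R}^A$ be given by $Jx=\bigl(\phi_a(x)\bigr)_{a\in A}$.

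First I would verify that $J$ is a lattice isomorphism onto a regular sublattice of~$\mathbb{R}^A$. It is a lattice homomorphism because each $P_a$ is one and $a>0$, so $\phi_a(x\vee y)=\phi_a(x)\vee\phi_a(y)$ for all $x,y\in X$. It is injective: if $Jx=0$ then $P_a\abs{x}=\abs{P_ax}=0$ for every $a\in A$, so $\abs{x}$ is disjoint from every atom of $A$, and therefore from the band generated by $A$, namely from $X$; hence $\abs{x}=0$. Regularity of $J(X)$ in $\mathbb{R}^A$ follows from Lemma~\ref{regular}: if $Jx_\alpha\downarrow 0$ in $J(X)$ then $x_\alpha\downarrow 0$ in $X$, and order continuity of each $\phi_a$ yields $\phi_a(x_\alpha)\downarrow 0$ for every $a$, i.e.\ $Jx_\alpha\downarrow 0$ in~$\mathbb{R}^A$.

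Next I would translate uo-convergence through this representation. Endowing $A$ with the counting measure, $\mathbb{R}^A$ is $L_0$ of that measure, and ``a.e.'' convergence with respect to it is exactly coordinatewise convergence. Hence, applying Theorem~\ref{uo_regular} to the regular sublattice $J(X)\subseteq\mathbb{R}^A$ together with Remark~\ref{uo-kothe} (equivalently, Proposition~\ref{L0}), for any sequence $(x_n)$ in $X$ we obtain
\[
  x_n\xrightarrow{\rm uo}0\ \text{in}\ X
  \iff Jx_n\xrightarrow{\rm uo}0\ \text{in}\ \mathbb{R}^A
  \iff \phi_a(x_n)\to 0\ \text{for every}\ a\in A.
\]
Finally, if $x_n\xrightarrow{\rm w}0$ in $X$, then since each $\phi_a$ is a bounded linear functional we have $\phi_a(x_n)\to 0$ for every $a\in A$, and the displayed equivalence yields $x_n\xrightarrow{\rm uo}0$ in~$X$. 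The only genuinely non-routine point is the construction of $J$ and the verification that its image is regular in $\mathbb{R}^A$; once that is in place, the statement is an immediate consequence of Theorem~\ref{uo_regular} and Proposition~\ref{L0}.
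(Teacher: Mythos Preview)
Your argument is correct. The construction of $J$ is sound: in an order continuous (hence Dedekind complete) Banach lattice the band generated by an atom $a$ is indeed $\mathbb{R}a$, the coordinate functionals $\phi_a$ are bounded order continuous lattice homomorphisms, injectivity follows since $A^{\mathrm{d}}=\{0\}$ by atomicity, and regularity of $J(X)$ in $\mathbb{R}^A$ is exactly the order continuity of the $\phi_a$'s. The identification $\mathbb{R}^A=L_0(\text{counting measure})$ then lets you invoke Theorem~\ref{uo_regular} and Proposition~\ref{L0} to reduce uo-nullity to coordinatewise nullity, which is immediate from weak nullity.

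The paper takes a different and considerably shorter route: it argues by contradiction, appealing to \cite[Lemma~1.2]{Gao:14} to extract a subsequence $(x_{n_k})$ with $u:=\inf_k\abs{x_{n_k}}>0$, then picks a single atom $a\le u$ and uses its biorthogonal functional $f$ (your $\phi_a$) to get $\abs{f(x_{n_k})}\ge f(a)=1$, contradicting $x_{n_k}\xrightarrow{\rm w}0$. So the paper localizes to one atom via an external lemma, whereas you globalize by building the full coordinatization $J\colon X\hookrightarrow\mathbb{R}^A$ and quoting only results proved in the present paper. Your approach is longer but self-contained and yields the reusable structural fact that uo-convergence in an atomic order continuous Banach lattice is precisely coordinatewise convergence over a maximal family of atoms; the paper's approach is quicker but imports a characterization of non-uo-null sequences from \cite{Gao:14}.
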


\begin{proof}
  Suppose not. Then  \cite[Lemma~1.2]{Gao:14} implies $\inf_k\abs{x_{n_k}}>0$ for some subsequence $(x_{n_k})$ of $(x_n)$. There is an atom $a\in X_+$ such that $a<\inf_k\abs{x_{n_k}}$. In particular, $\abs{x_{n_k}}>a$ for every~$k$. Let $f$ be the biorthogonal functional of~$a$, that is, $P_ax=f(x)a$ for every $x\in X$, where $P_a$ is the band projection onto~$B_a$. Then $f$ is a lattice homomorphism, so that
  \begin{math}
    \bigabs{f(x_{n_k})}=f\bigl(\abs{x_{n_k}}\bigr)\ge f(a)=1.
  \end{math}
This contradicts $x_{n_k}\xrightarrow{\rm w}0$.
\end{proof}

Regarding DWBSP$\Rightarrow$DBSP, we have the following result.

\begin{proposition}
A Banach lattice $X$ with the DWBSP has the DBSP if and only if it contains no lattice copy of~$\ell_1$.
\end{proposition}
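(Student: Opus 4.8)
The statement to prove is: \emph{a Banach lattice $X$ with the DWBSP has the DBSP if and only if it contains no lattice copy of $\ell_1$.} First I would record the easy direction. If $X$ contains a lattice copy of $\ell_1$, then that copy is a closed sublattice whose unit vectors form a bounded disjoint sequence with no Ces\`aro convergent subsequence (Ces\`aro means of $\ell_1$-unit vectors have norm $1$ and are far apart); since a closed sublattice of an order continuous Banach lattice is regular, uo-convergence and hence the failure of Ces\`aro convergence passes up to $X$ by Corollary~\ref{subl}, so $X$ fails the DBSP. Hence DBSP $\Rightarrow$ no lattice copy of $\ell_1$. (One should note DWBSP forces order continuity here, as DWBSP is only defined for order continuous $X$ in this section, so Corollary~\ref{subl} applies.)

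For the nontrivial direction, assume $X$ has the DWBSP and contains no lattice copy of $\ell_1$; I want to deduce the DBSP. Let $(x_n)$ be a bounded disjoint sequence; I may assume $x_n \ge 0$ by splitting into positive and negative parts (which are still disjoint and bounded). The key tool is a Rosenthal-type alternative for disjoint sequences: since $X$ contains no lattice copy of $\ell_1$, a bounded disjoint positive sequence cannot be equivalent to the $\ell_1$-basis, so by the disjoint/lattice version of Rosenthal's $\ell_1$-theorem it has a weakly Cauchy subsequence. Replacing $(x_n)$ by such a subsequence and passing to successive differences $u_k = x_{2k} - x_{2k-1}$, I obtain a \emph{weakly null} disjoint sequence (differences of a weakly Cauchy sequence are weakly null, and differences of disjoint positives remain disjoint). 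Applying the DWBSP to $(u_k)$ gives a Ces\`aro convergent subsequence of the differences; I then need to combine this with information about the "even-averaged" part $v_k = \tfrac12(x_{2k}+x_{2k-1})$, which is again bounded disjoint positive. The strategy is to iterate: at each stage the troublesome part is a bounded disjoint positive weakly Cauchy sequence whose consecutive differences are handled by DWBSP, and one shows the averages of the original sequence converge. A cleaner route is to use Theorem~\ref{erdos}: pass to a subsequence so that either every further subsequence Ces\`aro converges (done) or every further subsequence Ces\`aro diverges, and derive a contradiction in the latter case from the weak-Cauchy-plus-differences argument together with DWBSP and Lemma~\ref{aobdd-BSP}.

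The main obstacle I anticipate is the passage from "consecutive differences are Ces\`aro convergent" back to "the original averages converge": writing $\tfrac1{2m}\sum_{i=1}^{2m} x_i$ in terms of the $u_k$'s and $v_k$'s does not immediately telescope, so one needs either a genuine iteration/diagonal argument or a more structural input. The natural structural input is that a bounded disjoint positive weakly Cauchy sequence in an order continuous Banach lattice is in fact weakly convergent (its weak limit $x$ satisfies $0 \le x \le \liminf$, and disjointness with positivity forces enough control), so that $x_n - x$ is weakly null and disjoint-like, allowing a direct appeal to the DWBSP after a small perturbation by Lemma~\ref{KP}. Making this reduction precise — in particular controlling the weak limit and verifying that subtracting it keeps us essentially in the disjoint setting so that DWBSP genuinely applies — is the technical heart of the argument; once it is in place, Lemma~\ref{aobdd-BSP} and Theorem~\ref{bsps} package everything into the DBSP as in the proofs of Propositions~\ref{d-bsp} and~\ref{splitting}.
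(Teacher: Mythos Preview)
Your ``only if'' direction is essentially fine, though overcomplicated: once you have a lattice copy of $\ell_1$, the images of the unit vectors are a bounded disjoint sequence in $X$ whose Ces\`aro means (of any subsequence) are uniformly bounded below in norm and not Cauchy, by the norm equivalence coming from the lattice isomorphism. That already witnesses failure of the DBSP; no appeal to uo-convergence or Corollary~\ref{subl} is needed, since DBSP is a statement about \emph{norm} convergence of Ces\`aro means. Your parenthetical claim that DWBSP is only defined for order continuous $X$ is incorrect --- the paper defines DBSP and DWBSP for arbitrary Banach lattices --- and in any case you do not need order continuity for this direction.

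For the ``if'' direction you are missing the single structural fact that makes the proof immediate, and your workaround does not close. The paper simply invokes \cite[Theorem~2.4.14]{Meyer-Nieberg:91}: a Banach lattice contains no lattice copy of $\ell_1$ if and only if every norm bounded disjoint sequence is weakly null. Granting this, a bounded disjoint sequence is already weakly null, and DWBSP applies directly to give a Ces\`aro convergent subsequence; that is the entire argument. Your route via Rosenthal, passing to a weakly Cauchy subsequence and then to consecutive differences $u_k=x_{2k}-x_{2k-1}$, runs into exactly the obstacle you identify: Ces\`aro convergence of $(u_k)$ does not by itself yield Ces\`aro convergence of $(x_n)$, and your proposed fix (show the weakly Cauchy disjoint positive sequence is actually weakly convergent, subtract the limit, and hope the remainder is ``disjoint-like'') is both vague and, once made precise, essentially reproves the Meyer-Nieberg characterization anyway --- a disjoint positive sequence that is weakly convergent must be weakly null. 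So rather than building an ad hoc iteration, you should cite or prove that no-lattice-$\ell_1$ forces bounded disjoint sequences to be weakly null, and then the DWBSP hypothesis finishes in one line.
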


\begin{proof}
The ``only if'' part follows from the fact that $\ell_1$ fails the DBSP.
For the ``if'' part, suppose that $X$ contains no lattice copies of~$\ell_1$. Then \cite[Theorem~2.4.14]{Meyer-Nieberg:91} guarantees that every norm bounded disjoint sequence in $X$ is weakly null and, therefore, DWBSP yields DBSP.
\end{proof}

\subsection{Banach-Saks operators}

\begin{definition}
An operator $T$ from a Banach space $X$ to a Banach space $Y$ is called a \term{Banach-Saks} (respectively, \term{weakly Banach-Saks}) operator if for any norm bounded (respectively, weakly null) sequence $(x_n)$ in~$X$, $(Tx_n)$ has a Ces\`aro convergent subsequence.
\end{definition}

The following is a useful characterization of (weakly) Banach-Saks operators. The proof of this result is an immediate application of Lemma~\ref{aobdd-BSP}.

\begin{theorem}\label{BSO}
Let $X$ be a Banach space and $Y$ be an order continuous Banach lattice. For an operator $T\colon X\rightarrow Y$, the following are equivalent.
\begin{enumerate}
\item\label{BSOi1} $T$ is a Banach-Saks (respectively,~weakly Banach-Saks) operator,
\item\label{BSOi2} For every norm bounded (respectively,~weakly null) sequence $(x_n)$ in~$X$, there is a subsequence $(x_{n_k})$ such that the Ces\`aro means of any subsequence of $(Tx_{n_k})$ are norm and uo-convergent to some~$y$.
\item\label{BSOi3} For every norm bounded (respectively,~weakly null) sequence $(x_n)$ in~$X$, $(Tx_n)$ has a subsequence whose Ces\`aro means are almost order bounded.
\end{enumerate}
\end{theorem}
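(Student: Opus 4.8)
The plan is to prove Theorem~\ref{BSO} by establishing the chain of implications \eqref{BSOi2}$\Rightarrow$\eqref{BSOi1}$\Rightarrow$\eqref{BSOi3}$\Rightarrow$\eqref{BSOi2}, exactly as in the characterizations of the BSP and WBSP given in Theorem~\ref{bsps}; in fact the statement is essentially Theorem~\ref{bsps} applied to the image sequence $(Tx_n)$ rather than to $(x_n)$ itself, so the work has already been done in Lemma~\ref{aobdd-BSP}. Throughout I treat both cases simultaneously, with "$(x_n)$ admissible" meaning "norm bounded" in the Banach-Saks case and "weakly null" in the weakly Banach-Saks case. Note that if $(x_n)$ is admissible in $X$, then $(Tx_n)$ is a bounded sequence in $Y$: in the bounded case this is immediate from boundedness of $T$, and in the weakly null case $(Tx_n)$ is weakly null, hence norm bounded by the uniform boundedness principle. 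This is the only point where the two cases need separate (trivial) comments.

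First I would prove \eqref{BSOi2}$\Rightarrow$\eqref{BSOi1}. This is immediate: if the Ces\`aro means of some subsequence $(Tx_{n_k})$ are norm convergent to $y$, then $(Tx_{n_k})$ is a Ces\`aro convergent subsequence of $(Tx_n)$, which is exactly what is required for $T$ to be (weakly) Banach-Saks. Next, \eqref{BSOi1}$\Rightarrow$\eqref{BSOi3}: given an admissible $(x_n)$, the operator hypothesis produces a subsequence $(x_{n_k})$ such that the Ces\`aro means $(s_m)$ of $(Tx_{n_k})$ converge in norm; a norm convergent sequence is almost order bounded (as remarked in the paragraph preceding Theorem~\ref{uo-aobdd}), so $(s_m)$ is almost order bounded and \eqref{BSOi3} holds.

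The substantive implication is \eqref{BSOi3}$\Rightarrow$\eqref{BSOi2}, and here I would simply invoke Lemma~\ref{aobdd-BSP}. Let $(x_n)$ be admissible in $X$ and set $u_n=Tx_n$; as noted above, $(u_n)$ is a bounded sequence in the order continuous Banach lattice $Y$. Hypothesis~\eqref{BSOi3} says precisely that every subsequence of $(u_n)$ has a further subsequence whose Ces\`aro means are almost order bounded --- indeed, applying \eqref{BSOi3} to the (still admissible) subsequence in question yields such a sub-subsequence. Thus the hypotheses of Lemma~\ref{aobdd-BSP} are met, and the lemma delivers a subsequence $(u_{n_k})=(Tx_{n_k})$ and a vector $y\in Y$ such that the Ces\`aro means of any subsequence of $(Tx_{n_k})$ converge uo- and in norm to $y$. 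This is exactly \eqref{BSOi2}, completing the cycle.

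The only genuine obstacle, if any, is bookkeeping: one must check that "admissible" is inherited by subsequences (clear for both "norm bounded" and "weakly null") so that \eqref{BSOi3} can be re-applied to subsequences, and one must handle the passage from admissibility of $(x_n)$ in $X$ to boundedness of $(Tx_n)$ in $Y$ in the weakly null case via the uniform boundedness principle. Neither is serious. Once these observations are in place, Lemma~\ref{aobdd-BSP} does all the real work, and the proof is a short formal verification.
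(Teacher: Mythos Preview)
Your proposal is correct and matches the paper's approach: the paper simply states that the result is ``an immediate application of Lemma~\ref{aobdd-BSP}'', and you have written out exactly that application, with the trivial implications \eqref{BSOi2}$\Rightarrow$\eqref{BSOi1}$\Rightarrow$\eqref{BSOi3} and the bookkeeping for admissibility made explicit.
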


This theorem allows us to present a simple proof of the following result, which was originally proved in \cite[Corollary~3.3]{Flores:08} by different methods. Our proof demonstrates the efficiency of the approach of transferring topological properties to order properties.

\begin{corollary}[{\cite{Flores:08}}]\label{bsd}Let $X$ and $Y$ be Banach lattices with $Y$ order continuous. If $0\leq S\leq T:X\rightarrow Y$ with $T$ Banach-Saks, then $S$ is also Banach-Saks.
\end{corollary}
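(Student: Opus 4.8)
The plan is to deduce this directly from the characterization of Banach--Saks operators in Theorem~\ref{BSO}, which converts the (topological) Banach--Saks property into the (order-theoretic) statement that suitable Ces\`aro means are almost order bounded; the virtue of this reformulation is that almost order boundedness interacts transparently with the domination $0\le S\le T$. Concretely, by the implication \eqref{BSOi3}$\Rightarrow$\eqref{BSOi1} of Theorem~\ref{BSO} applied to $S\colon X\to Y$, it suffices to prove that for every norm bounded sequence $(x_n)$ in $X$, the sequence $(Sx_n)$ has a subsequence whose Ces\`aro means form an almost order bounded subset of $Y$.

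So let $(x_n)$ be norm bounded in $X$. First I would replace it by $(\abs{x_n})$, which is still norm bounded, since $\norm{\abs{x_n}}=\norm{x_n}$, and positive. Applying the implication \eqref{BSOi1}$\Rightarrow$\eqref{BSOi3} of Theorem~\ref{BSO} to the Banach--Saks operator $T$ and the sequence $(\abs{x_n})$ produces a subsequence $(x_{n_k})$ such that the Ces\`aro means
\[
  b_m:=\frac1m\sum_{k=1}^m T\abs{x_{n_k}}
\]
form an almost order bounded subset of $Y$. The domination is now used pointwise: since $S$ is positive, $\abs{Sx}\le S\abs{x}\le T\abs{x}$ for every $x\in X$, whence for each $m$
\[
  \Bigabs{\frac1m\sum_{k=1}^m Sx_{n_k}}\le\frac1m\sum_{k=1}^m\abs{Sx_{n_k}}\le\frac1m\sum_{k=1}^m T\abs{x_{n_k}}=b_m.
\]

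Thus every Ces\`aro mean of $(Sx_{n_k})$ is dominated in modulus by a member of the almost order bounded set $\{b_m\}$. By the characterization of almost order boundedness recalled before Theorem~\ref{uo-aobdd}, namely $A\subseteq[-u,u]+\varepsilon B_Y$ iff $\sup_{a\in A}\norm{(\abs{a}-u)^+}\le\varepsilon$, together with the monotonicity of $t\mapsto(t-u)^+$, it follows at once that the Ces\`aro means of $(Sx_{n_k})$ also form an almost order bounded set: given $\varepsilon>0$, pick $u\in Y_+$ with $\sup_m\norm{(b_m-u)^+}\le\varepsilon$; then $\norm{\bigl(\abs{\frac1m\sum_{k\le m}Sx_{n_k}}-u\bigr)^+}\le\norm{(b_m-u)^+}\le\varepsilon$ for every $m$. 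Hence $S$ satisfies \eqref{BSOi3} of Theorem~\ref{BSO} and is therefore Banach--Saks. I do not expect a genuine obstacle here; the only step requiring a word of care is that almost order boundedness is inherited by any set of vectors whose moduli are dominated by elements of a given almost order bounded set, and this is immediate from the stated characterization. All of the real work is packaged inside Theorem~\ref{BSO}.
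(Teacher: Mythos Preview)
Your proof is correct and follows essentially the same route as the paper: both arguments pass to $(\abs{x_n})$, use the Banach--Saks hypothesis on $T$ to get a subsequence with almost order bounded Ces\`aro means of $(T\abs{x_{n_k}})$, invoke the domination inequality~\eqref{Cesaro-domin}, and conclude via Theorem~\ref{BSO}. The only cosmetic difference is that the paper obtains almost order boundedness of the $(T\abs{x_{n_k}})$ means by first noting they are norm convergent, whereas you cite the implication \eqref{BSOi1}$\Rightarrow$\eqref{BSOi3} of Theorem~\ref{BSO} directly.
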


\begin{proof}
Let $(x_n)$ be a norm bounded sequence in~$X$. Clearly, the sequence $\bigl(\abs{x_n}\bigr)$ is also bounded. Then there exists a subsequence such that the Ces\`aro means of $(T\abs{x_{n_k}})$ are convergent and, therefore, almost order bounded. Since
  \begin{equation}\label{Cesaro-domin}
   \Bigabs{\frac1m\sum_{k=1}^mSx_{n_k}}
   \le\frac1m\sum_{k=1}^mS\abs{x_{n_k}}
   \leq\frac1m\sum_{k=1}^mT\abs{x_{n_k}},
  \end{equation}
the Ces\`aro means of $(Sx_{n_k})$ are also almost order bounded. Hence, $S$ is a Banach-Saks operator by Theorem~\ref{BSO}.
\end{proof}

\begin{remark}
  In \cite{Flores:08}, Corollaries~3.2 and~3.3 are deduced from Theorem~3.1.  Having just presented an alternative proof of their Corollaries~3.3, we note that it easily implies Theorem~3.1 of~\cite{Flores:08}. Indeed, consider the second diagram on page~98 of~\cite{Flores:08}. Since $T_1$ is a Banach-Saks operator, so is $\phi T_1$. Since $F$ is order continuous and $0\le\phi R_1\le\phi T_1$, we conclude that $\phi R_1$ is a Banach-Saks operator. It follows that $R_2R_1=Q\phi R_1$ is Banach-Saks.
\end{remark}

The domination problem for weakly Banach-Saks positive operators remains open. We present the following results. Following \cite{Groenewegen:82,Chen:98}, we say that a Banach lattice has the \term{W1 property} if for every relatively weakly compact set~$A$, the set
\begin{math}
  \bigl\{\abs{a}\mid a\in A\bigr\}
\end{math}
is again relatively weakly compact. This class of spaces includes KB-spaces, atomic order continuous Banach lattices, and AM-spaces.

\begin{theorem}\label{dwbs}
Let $X $ and $Y$ be Banach lattices such that $X$ has the W1 property and $Y$ is order continuous. If $0\leq S\leq T:X\rightarrow Y$ with $T$ weakly Banach-Saks, then $S$ is a weakly Banach-Saks operator.
\end{theorem}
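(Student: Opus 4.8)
The plan is to use Theorem~\ref{BSO} to reduce the norm convergence of Ces\`aro means to the order-theoretic statement of almost order boundedness, and then to transfer this property from $T$ to $S$ by domination; the W1 property of $X$ will be the device that lets us feed a \emph{positive} weakly null sequence into the hypothesis on $T$.

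First I would fix a weakly null sequence $(x_n)$ in $X$. By Theorem~\ref{BSO}\eqref{BSOi3} (which applies since $Y$ is order continuous), it suffices to produce a subsequence $(x_{n_k})$ of $(x_n)$ whose image $(Sx_{n_k})$ has almost order bounded Ces\`aro means. Being weakly null, $(x_n)$ is relatively weakly compact, so $\bigl\{\abs{x_n}:n\in\mathbb N\bigr\}$ is relatively weakly compact by the W1 property of~$X$. Passing to a subsequence, I may therefore assume $\abs{x_n}\xrightarrow{\rm w}u$ for some $u\in X_+$ (the cone is weakly closed). The sequence $(\abs{x_n}-u)$ is then weakly null, so, $T$ being weakly Banach-Saks, there is a further subsequence, which I denote $(x_{n_k})$, along which the Ces\`aro means of $\bigl(T(\abs{x_{n_k}}-u)\bigr)$ converge in norm. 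Adding the constant $Tu$, the Ces\`aro means $t_m:=\frac1m\sum_{k=1}^mT\abs{x_{n_k}}$ converge in norm, hence form an almost order bounded sequence in~$Y$.

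It remains to dominate. Since $S\ge0$ and $S\le T$, in the Banach lattice $Y$ we have, for every~$m$,
\[
  \textstyle
  \Bigabs{\frac1m\sum_{k=1}^mSx_{n_k}}
  \le\frac1m\sum_{k=1}^m\abs{Sx_{n_k}}
  \le\frac1m\sum_{k=1}^mS\abs{x_{n_k}}
  \le\frac1m\sum_{k=1}^mT\abs{x_{n_k}}=t_m .
\]
Recall (from the discussion preceding Theorem~\ref{uo-aobdd}) that a set $A\subseteq Y$ is almost order bounded iff for each $\varepsilon>0$ there is $x\in Y_+$ with $\sup_{a\in A}\norm{(\abs a-x)^+}\le\varepsilon$; combined with the monotonicity of $t\mapsto t^+$, the displayed inequalities show that the Ces\`aro means of $(Sx_{n_k})$ inherit almost order boundedness from $(t_m)$. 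Theorem~\ref{BSO}\eqref{BSOi3} then yields that $S$ is a weakly Banach-Saks operator.

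The only step that is not purely formal is the passage to~$u$: one cannot apply the weakly Banach-Saks hypothesis on $T$ to $(\abs{x_n})$ directly, since this sequence is in general not weakly null, and the W1 property is precisely what supplies the weak limit $u$ needed to replace it by the weakly null sequence $(\abs{x_n}-u)$. Everything else is the by-now-standard ``almost order boundedness plus domination'' mechanism already used in Corollary~\ref{bsd}.
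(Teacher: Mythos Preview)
Your proof is correct and follows essentially the same route as the paper's: pass to a subsequence so that $\abs{x_n}\xrightarrow{\rm w}u$ via the W1 property, apply the weakly Banach-Saks hypothesis on $T$ to the weakly null sequence $(\abs{x_n}-u)$ to make the Ces\`aro means of $(T\abs{x_{n_k}})$ almost order bounded, and then use the domination inequality~\eqref{Cesaro-domin} together with Theorem~\ref{BSO}\eqref{BSOi3}. The only cosmetic difference is that you invoke the definition of weakly Banach-Saks directly (norm convergence of Ces\`aro means) before concluding almost order boundedness, whereas the paper phrases that step via Theorem~\ref{BSO}; the content is the same.
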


\begin{proof}
Let $(x_n)$ be a weakly null sequence in~$X$.
By the property (W1), we may assume, by passing to a subsequence, that
$\abs{x_n}\xrightarrow{\rm w}a$ for some $a\in X$. Since $T$ is weakly Banach-Saks, passing to a further subsequence we may assume by Theorem~\ref{BSO} that the Ces\`aro means of $\bigl(T\abs{x_n}-Ta\bigr)$ and, therefore, of $\bigl(T\abs{x_n}\bigr)$ are almost order bounded. As in~\eqref{Cesaro-domin}, we conclude that the Ces\`aro means of $(Sx_n)$ are almost order bounded. Apply Theorem~\ref{BSO} again.
\end{proof}

\begin{proposition}\label{dwbs2} Let $X $ and $Y$ be Banach lattices such that $X$ is an order continuous Banach lattice with the  subsequence splitting property. If $0\leq S\leq T\colon X\rightarrow Y$ with $T$ weakly Banach-Saks, then $S$ is a weakly Banach-Saks operator.
\end{proposition}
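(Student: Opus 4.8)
The plan is to mirror the argument used for Proposition~\ref{splitting} and Theorem~\ref{dwbs}: the subsequence splitting property lets us decompose a weakly null sequence into an almost order bounded piece plus a disjoint piece, and the two pieces are then handled by different tools --- the almost order bounded piece by the Koml\'os machinery of Section~\ref{sec:komlos} (Lemma~\ref{aobdd-BSP}), and the disjoint piece by the domination $0\le S\le T$ together with the weak Banach-Saks property of~$T$. Note that, in contrast to the surrounding results, the codomain $Y$ need not be order continuous here: the estimates for $S$ will use only that $X$ is order continuous (so that Lemma~\ref{aobdd-BSP} is available) and that the lattice operations and the norm of $Y$ are monotone on the positive cone.

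So let $(x_n)$ be a weakly null sequence in~$X$. First I would apply the subsequence splitting property to pass to a subsequence and write $x_n=y_n+z_n$, where $(y_n)$ is almost order bounded, $(z_n)$ is disjoint, and $y_n\perp z_n$ for every~$n$; by Remark~\ref{SSP-w-null}, the sequences $(y_n)$, $(z_n)$ and $\bigl(\abs{z_n}\bigr)$ are all weakly null. For the almost order bounded piece, observe that every Ces\`aro mean of $(y_n)$ lies in the convex solid hull of $\{y_n\mid n\ge 1\}$, which is again almost order bounded; hence the hypotheses of Lemma~\ref{aobdd-BSP} are met, and after passing to a further subsequence the Ces\`aro means of \emph{every} subsequence of $(y_n)$ converge uo- and in norm to some $y\in X$. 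Since the Ces\`aro means of the weakly null sequence $(y_n)$ are weakly null, necessarily $y=0$; as $S$ is bounded, it follows that $\frac1m\sum_{k=1}^m Sy_k=S\bigl(\frac1m\sum_{k=1}^m y_k\bigr)\to 0$ in norm, and the same remains true for every further subsequence.

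For the disjoint piece, since $\bigl(\abs{z_n}\bigr)$ is weakly null and $T$ is weakly Banach-Saks, I would pass to a further subsequence so that the Ces\`aro means of $\bigl(T\abs{z_n}\bigr)$ converge in norm; being weakly null, they converge to~$0$. Using $0\le S\le T$ we then have
\begin{displaymath}
  \Bigabs{\frac1m\sum_{k=1}^m Sz_k}\le\frac1m\sum_{k=1}^m\abs{Sz_k}\le\frac1m\sum_{k=1}^m S\abs{z_k}\le\frac1m\sum_{k=1}^m T\abs{z_k},
\end{displaymath}
and the right-hand side tends to $0$ in norm, so the Ces\`aro means of $(Sz_k)$ are norm null as well. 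Combining the two pieces, the Ces\`aro means of $(Sx_k)=(Sy_k+Sz_k)$ converge in norm to~$0$, which proves that $S$ is weakly Banach-Saks. The main point requiring care is the bookkeeping of the successive passages to subsequences: it is crucial that Lemma~\ref{aobdd-BSP} yields norm convergence of the Ces\`aro means of \emph{every} further subsequence, so that shrinking the index set once more to handle the disjoint piece does not undo the work already done on the almost order bounded piece.
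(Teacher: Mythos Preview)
Your argument is correct and follows essentially the same route as the paper's proof: split via the subsequence splitting property, handle the almost order bounded part $(y_n)$ by Lemma~\ref{aobdd-BSP} applied in~$X$ (and then push through the bounded operator~$S$), and handle the disjoint part $(z_n)$ by applying the weak Banach-Saks property of $T$ to the weakly null sequence $\bigl(\abs{z_n}\bigr)$ from Remark~\ref{SSP-w-null}, using the domination~\eqref{Cesaro-domin}. Your additional remarks that $Y$ need not be order continuous and that the limits are actually~$0$ are correct refinements, but the skeleton is the same.
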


\begin{proof}
Let $(x_n)$ be a weakly null sequence in~$X$. Since $X$ has the subsequence splitting property, we may assume by passing to a subsequence that $x_n=z_n+y_n$, where $(z_n)$ is disjoint and $(y_n)$ is almost order bounded.
Passing to a further subsequence, we may assume that every subsequence of $(y_n)$ and, therefore, of $(Sy_n)$, is Ces\`aro convergent by Lemma~\ref{aobdd-BSP}. Recall from Remark~\ref{SSP-w-null} that $\abs{z_n}\xrightarrow{\rm w}0$. Therefore, after passing to a further subsequence, $\bigl(T\abs{z_n}\bigr)$ is Ces\`aro null. It follows that $(Sz_n)$ is Ces\`aro null. Hence, $(Sx_n)$ is Ces\`aro convergent.
\end{proof}

Note that the proof works whenever $X$ has the subsequence splitting property for weakly null sequences. Cf.~also \cite[Theorem~1.1]{Flores:06}. 

We would like to finish this section with an open problem.

\begin{problem}
  Can one remove or relax the assumptions on $X$ and $Y$ in Theorem~\ref{dwbs} and Proposition~\ref{dwbs2}?
\end{problem}

\bigskip

\textbf{Acknowledgement.} We would like to thank T.~Oikhberg for valuable discussions. Most of the work on this paper was done at the University of Alberta. The authors would like to thank the University of Alberta for hospitality and support.

\end{document}